\renewcommand{\leq}{\leqslant}
\newtheorem{theorem}{Theorem}	
\newtheorem{lemma}[theorem]{Lemma}
\newtheorem{proposition}[theorem]{Proposition}
\newtheorem{corollary}[theorem]{Corollary}
\newtheorem{definition}[theorem]{Definition}
\newcommand{\astfill}{\noindent\xleaders\hbox{$\ast$}\hfill\kern0pt}
\newcommand{\closure}{\mathrm{cl}}
\newcommand{\complexes}{\mathbb{C}}
\newcommand{\diam}{\mathrm{diam}}
\newcommand{\diameter}{\diam}
\newcommand{\integers}{\mathbb{Z}}
\newcommand{\interior}{\text{interior}}
\newcommand{\naturals}{\mathbb{N}}
\newcommand{\rationals}{\mathbb{Q}}
\newcommand{\reals}{\mathbb{R}}
\newcommand{\symdif}{\bigtriangleup}
\begin{document}

\title{An Application of Descriptive Set Theory to Complex Analysis}
\author{Christopher J. Caruvana}
\address{Indiana University Kokomo School of Sciences\\
2300 S Washington St, Kokomo, Indiana 46902, USA}
\email{chcaru@iu.edu}
\urladdr{https://chcaru.pages.iu.edu/}

\author{Robert R. Kallman}
\address{Denton, Texas, USA}
\email{kallman.robert@gmail.com}

\date{\today}

\subjclass[2010]{30H50, 54H05, 54H13}
\keywords{Descriptive set theory, Polish rings, Functions of a complex variable}

\maketitle


\begin{abstract}
The purpose of this paper is to prove a new general result about rings of complex analytic functions.   Let
$\Omega$ be an arbitrary nonempty open subset of the complex plane $\complexes$, $\mathcal{A}(\Omega)$ be the
set of holomorphic functions on $\Omega$ viewed as a Polish ring (not a Polish algebra over $\complexes$) in
the usual compact open topology, let $R$ be a Polish ring and let $\varphi : R \to \mathcal{A}(\Omega)$ be
an abstract algebraic isomorphism.  The main goal of this paper is to prove Theorem
\ref{theorem:ring-complete} that $\varphi$ is a topological isomorphism.  A special result of Bers is an easy
corollary.  Two additional items supplement these results, viz., that $B(\mathbb{D})$, the abstract ring of
bounded analytic functions on the unit disk, cannot be made into a Polish ring and that $\mathcal{M}(\Omega)$,
the abstract field of meromorphic functions on $\Omega$, cannot be made into a Polish field.
\end{abstract}


\section{Introduction}  \label{section:introduction}

The purpose of this paper is to prove a new general result about rings of complex analytic functions.   Let
$\Omega$ be an arbitrary nonempty open subset of the complex plane $\complexes$, $\mathcal{A}(\Omega)$ be the
set of holomorphic functions on $\Omega$ viewed as a Polish ring (not a Polish algebra over $\complexes$) in
the usual compact open topology, let $R$ be a Polish ring and let $\varphi : R \to \mathcal{A}(\Omega)$ be
an abstract algebraic isomorphism.  The main goal of this paper is to prove Theorem
\ref{theorem:ring-complete}, that $\varphi$ is a topological isomorphism.  A special result of Bers is an easy
corollary.  Two additional items are proved that helps put this theorem into perspective, that
$B(\mathbb{D})$, the abstract ring of bounded analytic functions on the unit disk, cannot be made into a
Polish ring and that $\mathcal{M}(\Omega)$, the abstract field of meromorphic functions on $\Omega$, cannot be
made into a Polish field.

This paper assumes a familiarity with descriptive set theory, especially Polish spaces, analytic sets, sets
with the Baire property and Borel spaces, as may be found in Parthasarathy (\cite{parthasarathy-1967a}),
Kechris (\cite{kechris-1995a}), Becker and Kechris (\cite{becker-kechris-1996a}) and Mackey
(\cite{mackey-1957a}).

The by now well known very general approach to the proof of theorems like Theorem \ref{theorem:ring-complete}
is to show that $\varphi$ is measurable with respect to the sets with the Baire property.  However, in almost
every proof of such theorems the verification of the measurability condition in any particular instance can be
intricate and require a fair amount of ingenuity.  Such seems to be the case here because of the generality of
the result and that a priori there appears so little structure to work with.

A complicating factor in the proof is that $\mathcal{A}(\Omega)$ is viewed as a Polish ring and not as a
Polish algebra over $\complexes$.  The proof is largely carried out in a sequence of lemmas and propositions,
some of which may be of independent interest. A number of the propositions in the following discussion are
given without proofs being presented.  In every case either references are given or the proofs are easy and
left to the reader.

We always assume unless otherwise stated that if $\Omega \subseteq \complexes$ is open then $\emptyset \ne
\Omega$.


\section{Borel Set Preliminaries}

The following general theorems and corollaries will prove to be useful.

\begin{theorem} [{\cite[Theorem 1.2.6]{becker-kechris-1996a}}]  \label{theorem:becker-kechris}
Let $\varphi: G \to H$ be a Baire Property measurable homomorphism between Polish groups.  Then $\varphi$
is continuous.  If moreover, $\varphi[G]$ is not meager, then $\varphi$ is also open.
\end{theorem}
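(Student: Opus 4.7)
The plan is to reduce both assertions to the classical Pettis-style lemma: if $A \subseteq G$ has the Baire property and is non-meager in a Polish group $G$, then $A^{-1}A$ contains an open neighborhood of the identity $e_G$. I will invoke this principle as a standard tool, once for continuity and once for openness.

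For continuity, I would first note that, by the homomorphism property, it suffices to prove continuity at $e_G$. Given an open neighborhood $V$ of $e_H$, choose an open $W \ni e_H$ with $W^{-1}W \subseteq V$, and use separability of $H$ to write $H = \bigcup_n h_n W$ for a countable family $\{h_n\}$. Pulling back yields $G = \bigcup_n \varphi^{-1}(h_n W)$, and the BP-measurability hypothesis guarantees that each $\varphi^{-1}(h_n W)$ has the Baire property. Since $G$ is Baire, some $A = \varphi^{-1}(h_{n_0} W)$ is non-meager. Pettis then furnishes a neighborhood $U$ of $e_G$ with $U \subseteq A^{-1}A$, and since
\[
\varphi(A^{-1}A) \subseteq (h_{n_0}W)^{-1}(h_{n_0}W) = W^{-1}W \subseteq V,
\]
we conclude $U \subseteq \varphi^{-1}(V)$.

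For openness, assume $\varphi[G]$ is non-meager. By homogeneity, it is enough to show that $\varphi(U)$ contains a neighborhood of $e_H$ for every open $U \ni e_G$. I would choose open $W \ni e_G$ with $W^{-1}W \subseteq U$, write $G = \bigcup_n g_n W$, and observe that $\varphi[G] = \bigcup_n \varphi(g_n)\varphi(W)$; non-meagerness of $\varphi[G]$ then forces $\varphi(W)$ to be non-meager. Having already shown $\varphi$ is continuous, hence Borel, I would conclude that $\varphi(W)$ is analytic in $H$ and therefore has the Baire property. A second application of Pettis now produces an open neighborhood of $e_H$ inside $\varphi(W)^{-1}\varphi(W) = \varphi(W^{-1}W) \subseteq \varphi(U)$.

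The main obstacle I anticipate is the handling of the two measurability points at the right moment: that $\varphi^{-1}(h_n W)$ has the Baire property (which is essentially by hypothesis), and that $\varphi(W)$ possesses the Baire property in the openness step (which requires first extracting continuity from the first half and then invoking that analytic subsets of a Polish space have the Baire property). Once these are in place, Pettis's lemma does all of the structural work and both conclusions follow uniformly.
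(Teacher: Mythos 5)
Your proposal is correct and is essentially the standard Pettis-lemma argument, which is the one used in the cited source (Becker--Kechris, Theorem 1.2.6); the paper itself states this result by reference rather than proving it. The only point worth flagging is that in the openness step one should take $W$ symmetric so that the equality $\varphi(W)^{-1}\varphi(W)=\varphi(W^{-1}W)$ is immediate from the homomorphism property, but this is exactly what you do, so there is no gap.
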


\begin{theorem}[{\cite[Theorem 12.17]{kechris-1995a}}] \label{thm:ClosedSubgroupsAdmitBorelTransversal}
Let $G$ be a Polish group and $H$ a closed subgroup of $G$. Then there exist a Borel subset $B$ of $G$ so that
$B$ meets every $H$-coset at exactly one point.
\end{theorem}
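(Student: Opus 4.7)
The plan is to realize $B$ as the image of a Borel section of the natural quotient map $\pi: G \to G/H$, where $G/H$ denotes the space of left cosets endowed with the quotient topology. My first step is to establish that $G/H$ is a Polish space: the map $\pi$ is always open, since $\pi^{-1}(\pi(U)) = UH$ is a union of open sets whenever $U$ is open; the quotient topology is Hausdorff because $H$ is closed; second countability is inherited from $G$; and completeness in a compatible metric can be arranged via a left-invariant compatible metric on $G$ (guaranteed by Birkhoff--Kakutani) together with the openness of $\pi$.

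Next I would apply the Kuratowski--Ryll-Nardzewski selection theorem to the set-valued map $F: G/H \to \mathcal{F}(G)$ sending a coset $xH$ to the closed set $xH \subseteq G$ (closed because $H$ is). Weak measurability of $F$ demands that $\{xH \in G/H : xH \cap U \ne \emptyset\}$ be Borel in $G/H$ for every open $U \subseteq G$; but this set is exactly $\pi(U)$, and it is even \emph{open} because $\pi$ is an open map. Kuratowski--Ryll-Nardzewski therefore produces a Borel section $s: G/H \to G$ with $s(xH) \in xH$ for every coset.

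Finally, set $B := s(G/H)$. By construction, $B$ meets every $H$-coset in exactly one point. To see that $B$ is Borel in $G$, observe that $s$ is injective: distinct cosets are disjoint in $G$ and $s(xH) \in xH$, so $s(xH)=s(yH)$ forces $xH = yH$. The Lusin--Souslin theorem then guarantees that the injective Borel image of a standard Borel space into a Polish space is Borel, so $B$ is a Borel subset of $G$, as desired.

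The main obstacle I anticipate is the topological bookkeeping surrounding the selection step rather than the selection itself: verifying that $G/H$ is genuinely Polish (completeness being the most delicate point) so that standard measurable-selection machinery applies cleanly, and then invoking Lusin--Souslin to promote a ``Borel image'' to a ``Borel subset.'' Without this final upgrade, one would only obtain an analytic transversal, which would be insufficient, so the interplay between openness of $\pi$, injectivity of $s$, and the Lusin--Souslin theorem is the crux of the argument.
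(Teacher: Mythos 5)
The paper does not actually prove this statement; it cites Kechris (Theorem 12.17) outright, so there is no in-paper argument to compare against. Your proposal is correct and is close to, but not identical with, Kechris's route. Kechris derives the transversal from a general selection theorem for equivalence relations (his Theorem 12.16) applied to the closed equivalence relation $E_H$ directly on $G$: the hypotheses there are exactly the two facts you isolate, namely that each class $xH$ is closed and that the $E_H$-saturation of any open set is open because $\pi$ is open. That argument is a self-contained Cantor-scheme construction on $G$ and never needs $G/H$ to be Polish. Your version instead factors through the quotient $G/H$, which means you take on the nontrivial auxiliary result that $G/H$ is itself Polish (Kechris Theorem 8.19); as you rightly anticipate, completeness is the delicate point there, and your sketch glosses it a bit --- a left-invariant compatible metric on $G$ need not be complete, so what one really uses is that a metrizable continuous open image of a Polish space is Polish. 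Granting that, the rest is sound: $F(xH)=xH$ is a closed-valued weakly measurable multifunction because $\{\,xH : xH\cap U\neq\emptyset\,\}=\pi(U)$ is open, Kuratowski--Ryll-Nardzewski yields a Borel section $s$, and since $s$ is injective the Lusin--Souslin theorem upgrades the image from analytic to Borel, which is precisely the step one cannot do without. In short, yours is a correct proof by a genuinely different decomposition: you outsource more to named theorems (Polishness of the quotient, KRN, Lusin--Souslin) in exchange for conceptual cleanliness, while Kechris's proof internalizes the selection argument on $G$ and sidesteps the quotient entirely.
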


\begin{proposition}[{\cite[Proposition 5]{atim-kallman-2012}}] \label{prop:Atim}
Suppose $G$ is a multiplicative Polish group, $H$ is an analytic subgroup of $G$, and $A \subseteq G$ is an
analytic subset of $G$ so that $A$ meets every $H$-coset at exactly one point and $G = AH$. Then $H$ is
closed.
\end{proposition}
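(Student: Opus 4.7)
The plan is to reduce the problem to showing that $H$ is non-meager in $G$. Since $H$ is analytic it has the Baire property, so non-meagerness combined with Pettis's theorem (equivalently, Theorem~\ref{theorem:becker-kechris} applied to the inclusion of a Polishable copy of $H$ into $G$) will force $H$ to contain an open neighborhood of the identity, making $H$ open and hence closed. Before attacking non-meagerness, I would first upgrade $H$ from analytic to Borel using the transversal. Define the coset projection $\rho: G \to A$ by sending $g$ to the unique element of $A \cap gH$. For any Borel $B \subseteq G$,
\[
\rho^{-1}(B) = (A \cap B) \cdot H
\]
is analytic as the image of the analytic set $(A \cap B) \times H$ under continuous multiplication; since $\rho^{-1}(G \setminus B) = (A \cap (G \setminus B)) \cdot H$ is analytic by the same reasoning, $\rho^{-1}(B)$ is both $\boldsymbol{\Sigma}^1_1$ and $\boldsymbol{\Pi}^1_1$, hence Borel by Suslin's theorem. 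Letting $a_0$ denote the unique element of $A \cap H$, we obtain that $H = \rho^{-1}(\{a_0\})$ is Borel, and symmetrically $A$ is Borel.

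Next I would rule out meagerness of $H$. Suppose for contradiction that $H$ is meager in $G$, so that every left coset $aH$ is meager by homogeneity of $G$. Endow $H$ with a Polish group topology $\tau_H$ refining its inherited subspace topology (possible because $H$ is a Borel subgroup of a Polish group) and $A$ with a refined Polish topology $\tau_A$ carrying the same Borel structure. The multiplication bijection $\mu: (A,\tau_A) \times (H,\tau_H) \to G$ is a continuous bijection between Polish spaces, and by the Lusin--Souslin theorem it is a Borel isomorphism. A Kuratowski--Ulam argument on the Borel graph of $\rho$ inside $G \times A$---whose fibers over points of $A$ are the meager cosets---combined with the bijectivity of $\mu$ should then yield a Baire category contradiction for $G$. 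An alternative and perhaps cleaner route is via an Effros-type theorem on Borel group actions: the Borel transversal $A$ presents $G/H$ as a standard Borel space on which $G$ acts transitively and in a Borel fashion, and in that setting the stabilizer of a point, namely $H$, must be closed.

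With $H$ established as a non-meager Borel subgroup, Theorem~\ref{theorem:becker-kechris} applied to the inclusion $i: (H,\tau_H) \hookrightarrow G$---a Borel, hence continuous, group monomorphism with non-meager image---shows that $i$ is open, so $H = i(H)$ is open in $G$ and therefore closed. I expect the main obstacle to be the non-meagerness step: meagerness is not a Borel-isomorphism invariant, so transferring meagerness between the subspace topology on $H$ in $G$ and the refined Polish topology $\tau_H$ must be done with care. The Effros-style angle seems cleanest, since the hypothesis furnishes exactly the transitive Borel coset action one needs, though the direct Kuratowski--Ulam approach exploiting the injectivity of $\mu$ should also succeed with some additional bookkeeping.
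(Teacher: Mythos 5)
Your plan --- prove $H$ is non-meager, then invoke Pettis to get $H$ open and hence closed --- cannot succeed, because the hypotheses of the proposition simply do not force $H$ to be non-meager. The simplest counterexample is $G = \reals$, $H = \integers$, $A = [0,1)$: all sets are Borel, $A$ meets each coset $x + \integers$ at exactly one point, and $\reals = A + \integers$, yet $\integers$ is meager and is certainly not open in $\reals$. Consequently your Kuratowski--Ulam step cannot produce a Baire category contradiction from ``every coset $aH$ is meager,'' since in this example every coset is meager and nothing is amiss; and your final paragraph, which plugs non-meagerness into Theorem \ref{theorem:becker-kechris} to conclude that $H$ is open, is aiming at a conclusion that is strictly stronger than the proposition and false in general. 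A secondary problem: the assertion that a Borel subgroup of a Polish group always admits a Polish group topology refining the subspace topology is not a theorem (not every Borel subgroup is Polishable), so the construction of $\tau_H$ is unjustified even before one reaches the category argument.

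What you do have right is the Borel upgrade: the coset map $\rho$ is Borel by the Souslin argument you give, and $H = \rho^{-1}(\{a_0\})$ and $A$ are therefore Borel. And the ``Effros-type'' route you mention in passing is essentially the correct one, but note that it sidesteps meagerness entirely rather than establishing it. With $H$ and $A$ Borel, $A$ is a Borel transversal for the Borel coset equivalence relation, so $G/H$ is a standard Borel space carrying a transitive Borel $G$-action. The Becker--Kechris topological realization theorem for Borel $G$-spaces (a different theorem from the Theorem \ref{theorem:becker-kechris} you cite) supplies a Polish topology on $G/H$ making the action continuous; then $H$ is the preimage of the closed singleton $\{eH\}$ under the continuous orbit map $g \mapsto g\cdot(eH)$, hence closed --- with no claim of openness or non-meagerness anywhere. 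Carried out that way, your sketch would close, but as written the non-meagerness scaffolding around it has to be discarded.
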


\begin{corollary} \label{cor:Atim}
Let $G$ be a multiplicative Polish group, $H$ and $K$ be subgroups of $G$ so that $H$ and $K$ are analytic
sets, $G = HK$, and $H \cap K = \{e\}$. Then both $H$ and $K$ are closed.
\end{corollary}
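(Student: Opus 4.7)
The plan is to apply Proposition \ref{prop:Atim} directly, taking $A := K$, to deduce that $H$ is closed, and then by symmetry (interchanging the roles of $H$ and $K$) to conclude that $K$ is closed. So all the work amounts to verifying the hypotheses of the proposition.

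First, since $H$ and $K$ are subgroups, the hypothesis $G = HK$ gives $G = G^{-1} = (HK)^{-1} = K^{-1}H^{-1} = KH$ as well. Next, I would check that $K$ meets every left $H$-coset at exactly one point. For existence, given $g \in G$, write $g = kh$ with $k \in K$ and $h \in H$ using $G = KH$; then $gh^{-1} = k \in K \cap gH$. For uniqueness, if $k_1, k_2 \in K \cap gH$, then on one hand $k_1^{-1}k_2 \in K$ since $K$ is a subgroup, and on the other hand
\[
k_1^{-1}k_2 = (g^{-1}k_1)^{-1}(g^{-1}k_2) \in H,
\]
since both factors lie in $H$. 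Thus $k_1^{-1}k_2 \in H \cap K = \{e\}$, so $k_1 = k_2$.

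At this point every hypothesis of Proposition \ref{prop:Atim} holds with $A = K$: namely $K$ is analytic, $K$ meets every $H$-coset in exactly one point, and $G = KH$. The proposition then yields that $H$ is closed. Reversing the roles of $H$ and $K$ (using $G = HK$ directly together with the identical coset computation) gives that $K$ is closed.

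There is no real obstacle here; the only point requiring care is matching the convention of Proposition \ref{prop:Atim} ($G = AH$ with $A$ a transversal for the $H$-cosets) against the given decomposition $G = HK$, which is why the inversion step $G = KH$ is needed before setting $A = K$.
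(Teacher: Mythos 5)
The paper leaves this corollary without proof (it is one of the ``easy and left to the reader'' items noted in the introduction), and your derivation is exactly the intended one: convert $G = HK$ to $G = KH$, verify that $K$ is a transversal for the left $H$-cosets, apply Proposition \ref{prop:Atim} to conclude $H$ is closed, and then repeat with the roles of $H$ and $K$ swapped. The coset computations are correct, and the remark about matching conventions ($G = AH$ versus the given $G = HK$) is exactly the small point that needs care here.
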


Recall that if $X$ is a set then a family $\mathscr{F}$ of subsets of $X$ separates points if for every pair
$x \neq y \in X$, there exists $A \in \mathscr{F}$ so that $x \in A$ and $y \not\in A$.

\begin{theorem}[Mackey {\cite[Theorem 3.3]{mackey-1957a}}]  \label{theorem:Mackey}
Let $X$ be a Polish space and suppose $\mathscr{F}$ is a countable family of Borel sets which separates
points. Then the family $\mathscr{F}$ generates the Borel structure of $X$. That is, the smallest
$\sigma$-algebra containing all members of $\mathscr{F}$ is precisely $\mathscr{B}(X)$.
\end{theorem}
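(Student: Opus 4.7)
The plan is to encode points of $X$ by their membership pattern in $\mathscr{F}$ and then transfer the Borel structure across this encoding. Enumerate $\mathscr{F} = \{A_n\}_{n\in\naturals}$ and define $\varphi : X \to 2^\naturals$ by $\varphi(x)(n) = \mathbbm{1}_{A_n}(x)$. Because each coordinate projection $2^\naturals \to \{0,1\}$ is Borel and its composition with $\varphi$ has preimage $A_n$ or $X \setminus A_n$, the map $\varphi$ is Borel measurable. The separation hypothesis on $\mathscr{F}$ says precisely that $\varphi$ is injective.

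Next I would appeal to the Lusin--Souslin theorem (a.k.a.\ Kuratowski's theorem on injective Borel images): an injective Borel map between standard Borel spaces has Borel range and is a Borel isomorphism onto its image. Applying this to $\varphi$, the set $Y := \varphi[X]$ is Borel in $2^\naturals$ and $\varphi : X \to Y$ is a Borel isomorphism. In particular the $\sigma$-algebra on $X$ obtained by pulling back $\mathscr{B}(Y)$ along $\varphi$ equals $\mathscr{B}(X)$.

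Now I would identify this pullback with the $\sigma$-algebra $\mathscr{B}_0$ generated by $\mathscr{F}$. The Borel structure on $2^\naturals$ is generated by the cylinder sets $C_n := \{s \in 2^\naturals : s(n) = 1\}$, and hence $\mathscr{B}(Y)$ is generated by $\{C_n \cap Y : n \in \naturals\}$. Since $\varphi^{-1}(C_n \cap Y) = A_n$, the pullback $\sigma$-algebra is generated by $\mathscr{F}$, i.e.\ is exactly $\mathscr{B}_0$. Combined with the previous step this gives $\mathscr{B}_0 = \mathscr{B}(X)$, as desired; the containment $\mathscr{B}_0 \subseteq \mathscr{B}(X)$ is automatic since each $A_n$ is Borel.

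The only nontrivial ingredient is the Lusin--Souslin theorem guaranteeing that $\varphi$ is a Borel isomorphism onto its image; everything else is routine bookkeeping. This is the main potential obstacle, but it is a standard result in the descriptive set theory references already cited in the paper (e.g.\ Kechris), so it may be invoked without further comment.
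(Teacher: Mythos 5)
Your proof is correct and is the standard modern argument for this result. The paper itself gives no proof: it simply cites Mackey's 1957 theorem (\cite[Theorem 3.3]{mackey-1957a}) as a black box. Your route --- encode points by their membership pattern in $\mathscr{F}$ to get an injective Borel map $\varphi: X \to 2^{\naturals}$, invoke Lusin--Souslin to conclude $\varphi$ is a Borel isomorphism onto a Borel subset of $2^{\naturals}$, and then observe that the pullback $\sigma$-algebra is $\sigma(\mathscr{F})$ because the cylinder sets generate $\mathscr{B}(2^{\naturals})$ and pull back to the $A_n$ --- is exactly how this is proved in the standard descriptive set theory references (it is essentially Exercise 14.16 together with Theorem 15.1 in Kechris). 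The only fact worth making explicit, which you use implicitly, is that for a generating family $\mathcal{G}$ and a Borel bijection $\varphi$, one has $\varphi^{-1}(\sigma(\mathcal{G})) = \sigma(\varphi^{-1}(\mathcal{G}))$, and that the trace $\sigma$-algebra on $Y = \varphi[X]$ is generated by the traces of the cylinders; both are routine.
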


View $\rationals^{2} \subset \reals^{2} = \complexes$ as complex numbers.

\begin{corollary} \label{corollary:Mcor}
Let $\mathscr{F}$ be a countable family of Borel subsets of $\complexes$, each with nonempty interior and with
the property that, for every $\varepsilon > 0$, there is some $B \in \mathscr{F}$ with $\diameter(B) < \varepsilon$.
Then the countable collection $\{ q + B \ | \ q \in \rationals^{2} \text{, } B \in \mathscr{F} \}$ generates
the Borel structure of $\complexes$, i.e., generates $\mathscr{B}(\complexes)$.
\end{corollary}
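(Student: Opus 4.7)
The plan is to apply Mackey's theorem (Theorem \ref{theorem:Mackey}) directly. Since the collection $\{q + B \,:\, q \in \rationals^2, B \in \mathscr{F}\}$ is the countable union of countable families of Borel sets, and is therefore itself countable and consists of Borel sets, it suffices to verify that it separates points of $\complexes$.

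To do this, I would fix two distinct points $x, y \in \complexes$ and set $\delta = |x - y| > 0$. First, use the diameter hypothesis to select $B \in \mathscr{F}$ with $\diam(B) < \delta$; then no translate $q + B$ can contain both $x$ and $y$, since otherwise $\delta = |x - y| \leq \diam(q + B) = \diam(B) < \delta$. Next, use the nonempty interior hypothesis to choose an open ball $U \subseteq B$ centered at some point $b_0 \in B$. Because $\rationals^{2}$ is dense in $\complexes$, the nonempty open set $x - U$ contains some $q \in \rationals^{2}$, which gives $x \in q + U \subseteq q + B$. Combining with the diameter bound, $x \in q + B$ while $y \notin q + B$, so the family separates $x$ from $y$.

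With point separation established, Theorem \ref{theorem:Mackey} immediately yields that the smallest $\sigma$-algebra containing $\{q + B \,:\, q \in \rationals^{2}, B \in \mathscr{F}\}$ equals $\mathscr{B}(\complexes)$. There is no real obstacle here; the only thing to be careful about is confirming that the given hypotheses on $\mathscr{F}$ (countability, nonempty interior, arbitrarily small diameter) are exactly what is needed to simultaneously center a small translate $q + B$ near an arbitrary prescribed point $x$ and to force its diameter below $|x - y|$.
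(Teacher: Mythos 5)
Your proposal is correct and follows exactly the approach the paper uses: the paper's proof is a one-liner that invokes Theorem \ref{theorem:Mackey} and asserts that point-separation is "easy to check," while you supply the straightforward verification (small diameter forces $y \notin q+B$, nonempty interior plus density of $\rationals^{2}$ lets you translate $B$ to cover $x$). No gap; you have simply written out the detail the paper elides.
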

\begin{proof}
This follows from Theorem \ref{theorem:Mackey} since it is easy to check that the countable collection $\{ q
+ B \ | \ q \in \rationals^{2} \text{, } B \in \mathscr{F} \}$ separates points.
\end{proof}

In what follows $\mathbb{D} = \{ \zeta \in \complexes \ | \  |\zeta| < 1 \}$ will denote the open unit disk
in the complex plane and $\closure(\mathbb{D}) = \{ \zeta \in \complexes \ | \  |\zeta| \le 1 \}$ is the
closed unit disk in the complex plane.

\begin{corollary} \label{corollary:crat-disk}
The $\sigma$-algebra generated by the sets $\{ \beta + \mathbb{D} \ | \ \beta \in \rationals^{2} \}$ is
$\mathscr{B}(\complexes)$. Similarly, the $\sigma$-algebra generated by the sets $\{ \beta +
\closure(\mathbb{D}) \ | \ \beta \in \rationals^{2} \}$ is $\mathscr{B}(\complexes)$.
\end{corollary}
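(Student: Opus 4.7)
My plan is to derive both assertions directly from Mackey's theorem (Theorem~\ref{theorem:Mackey}) by verifying that the displayed countable family of Borel sets separates points of $\complexes$. Note that Corollary~\ref{corollary:Mcor} does not apply off-the-shelf, since the translates $\beta + \mathbb{D}$ all share diameter $2$; so the small-diameter hypothesis there is irrelevant and we need to argue separation by hand.

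Given distinct $x, y \in \complexes$, I would try to produce a single rational $\beta \in \rationals^{2}$ with $|x - \beta| < 1$ and $|y - \beta| > 1$. Such a $\beta$ witnesses $x \in \beta + \mathbb{D}$ while $y \notin \beta + \mathbb{D}$, and simultaneously $x \in \beta + \closure(\mathbb{D})$ while $y \notin \beta + \closure(\mathbb{D})$, so one geometric construction handles both halves of the corollary at once. The set
\[
U = \{\beta \in \complexes : |x - \beta| < 1 \text{ and } |y - \beta| > 1\}
\]
is open, so once I show $U \neq \emptyset$, density of $\rationals^{2}$ in $\complexes$ supplies the needed rational $\beta$.

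The only real step is showing $U \neq \emptyset$. I would look along the ray from $y$ through $x$: set $\beta_{0} = x + t\,(x - y)/|x - y|$ for a positive parameter $t$, so that $|x - \beta_{0}| = t$ and $|y - \beta_{0}| = |x - y| + t$. Any $t$ with $\max\{0, 1 - |x-y|\} < t < 1$ lands $\beta_{0}$ in $U$, and such a $t$ exists precisely because $x \neq y$. This is elementary planar geometry, so I do not expect any obstacle; Mackey's theorem then closes both statements simultaneously.
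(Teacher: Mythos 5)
Your proof is correct, but it takes a genuinely different route from the paper. The paper does not verify separation of points for the family $\{\beta + \mathbb{D} : \beta \in \rationals^{2}\}$ directly; instead it observes that the generated $\sigma$-algebra is closed under finite intersections, and that intersecting $\mathbb{D}$ with a nearly-disjoint rational translate $2 - \delta + \mathbb{D}$ (for small rational $\delta > 0$) produces a lens-shaped open Borel set of arbitrarily small diameter. Feeding these lenses into Corollary~\ref{corollary:Mcor} then yields the result. You instead bypass the intermediate corollary entirely and check the point-separation hypothesis of Mackey's theorem (Theorem~\ref{theorem:Mackey}) by hand, with a short explicit geometric construction along the ray from $y$ through $x$; this is arguably more elementary and self-contained, and has the pleasant feature of handling the open and closed disk cases with the same witness $\beta$. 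The paper's route buys re-use of the machinery it just set up (and is the natural move if one wants to treat many shapes uniformly), while yours is leaner for this specific statement. Your observation that Corollary~\ref{corollary:Mcor} does not apply off-the-shelf to the family of unit-diameter translates is accurate and worth stating, since the paper's ``Now use Corollary~\ref{corollary:Mcor}'' implicitly relies on intersections of the translates, not the translates themselves, supplying the small-diameter sets.
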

\begin{proof}
Let $\delta > 0$, $\delta \in \rationals$ be very small.  Then $\mathbb{D} \cap (2 - \delta + \mathbb{D})$ is a
nonempty open set that fits inside a $\delta \times \sqrt{4\delta - \delta^{2}}$ box.  That is, first
translate $\mathbb{D}$ to be centered over $2$ and then move it slightly left.  Now use Corollary
\ref{corollary:Mcor}.  The proof for $\closure(\mathbb{D})$ is virtually the same.
\end{proof}

\begin{lemma} \label{lemma:Baire-homeomorphism}
Let $X$ be a Polish space, $\varphi$ a homeomorphism of $X$ and $B \subseteq X$ a subset with the Baire
property.  Then $\varphi[B]$ has the Baire property.
\end{lemma}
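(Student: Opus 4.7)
The plan is to use the standard characterization: a subset $B$ of a Polish space has the Baire property if and only if $B = U \mathbin{\triangle} M$ for some open set $U$ and some meager set $M$. So I would start by writing $B = U \mathbin{\triangle} M$ in this form and then compute $\varphi[B]$.

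Because $\varphi$ is a bijection, symmetric differences are preserved: $\varphi[U \mathbin{\triangle} M] = \varphi[U] \mathbin{\triangle} \varphi[M]$. Then the two easy verifications are that $\varphi[U]$ is open (immediate, since $\varphi$ is a homeomorphism, so $\varphi^{-1}$ is continuous and $\varphi$ sends open sets to open sets), and that $\varphi[M]$ is meager.

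The meagerness step is where one needs to say a sentence rather than invoke a definition directly. I would verify that a homeomorphism sends nowhere dense sets to nowhere dense sets: if $N \subseteq X$ is nowhere dense, then $\overline{\varphi[N]} = \varphi[\overline{N}]$ since $\varphi$ is a homeomorphism, and this set has empty interior because $\varphi^{-1}$ is continuous and open and $\overline{N}$ has empty interior (the image of a dense open set under a homeomorphism is dense and open). Writing $M = \bigcup_{n \in \naturals} N_n$ with each $N_n$ nowhere dense, we get $\varphi[M] = \bigcup_{n \in \naturals} \varphi[N_n]$, a countable union of nowhere dense sets, hence meager.

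Combining, $\varphi[B] = \varphi[U] \mathbin{\triangle} \varphi[M]$ is the symmetric difference of an open set and a meager set, so $\varphi[B]$ has the Baire property. There is no real obstacle here; the only subtlety is remembering to invoke the fact that a homeomorphism preserves both open sets and the class of nowhere dense sets (and hence meager sets), and that set-theoretic operations like symmetric difference commute with images under bijections.
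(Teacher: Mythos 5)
Your proof is correct and follows essentially the same route as the paper: both arguments rest on the facts that a homeomorphism carries nowhere dense sets to nowhere dense sets (hence meager to meager) and that bijections commute with symmetric differences, and then apply the open-modulo-meager characterization of the Baire property. The only cosmetic difference is that the paper works with closed nowhere dense sets directly, while you argue via nowhere dense sets and closures; this changes nothing of substance.
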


\begin{corollary} \label{corollary:ring-Baire}
If $R$ is a Polish ring, $a$, $b \in R$ with $a$ invertible, $B \subseteq R$ a subset with the Baire property, then
$aB + b$ is also a subset with the Baire property.
\end{corollary}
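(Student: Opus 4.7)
The plan is to realize the operation $B \mapsto aB + b$ as the image of $B$ under a homeomorphism of $R$, and then invoke Lemma \ref{lemma:Baire-homeomorphism}.

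Specifically, I would define $\varphi : R \to R$ by $\varphi(x) = ax + b$. Because $R$ is a Polish ring, both multiplication and addition are jointly continuous, so left multiplication by the fixed element $a$ and translation by the fixed element $b$ are each continuous. Hence $\varphi$ is continuous as the composition of two continuous maps. Using the invertibility of $a$, the candidate inverse is $\psi(x) = a^{-1}(x - b) = a^{-1} x - a^{-1} b$, which is continuous for the same reason. A direct check shows $\psi \circ \varphi = \mathrm{id}_R = \varphi \circ \psi$, so $\varphi$ is a homeomorphism of $R$ onto itself.

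With $\varphi$ in hand, the conclusion is immediate: $\varphi[B] = aB + b$, and Lemma \ref{lemma:Baire-homeomorphism} guarantees that $\varphi[B]$ has the Baire property whenever $B$ does. I do not anticipate any real obstacle here; the only subtlety worth noting is the requirement that $a$ be invertible in $R$, which is exactly what is needed to produce a two-sided continuous inverse for $\varphi$ and thereby reduce the corollary to the preceding lemma.
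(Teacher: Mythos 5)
Your proof is correct and follows exactly the same route as the paper's: define $\varphi(x) = ax + b$, observe it is a homeomorphism because $a$ is invertible, and apply Lemma \ref{lemma:Baire-homeomorphism}. You have merely written out the continuity of $\varphi$ and its inverse in more detail than the paper, which states the homeomorphism claim in a single sentence.
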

\begin{proof}
$\varphi(x) = ax + b$ is a homeomorphism since $a$ is invertible.  Now use Lemma \ref{lemma:Baire-homeomorphism}.
\end{proof}

Though it is consistent with ZFC that the continuous image of a set with the Baire property fails to have the
Baire property (see \cite{moschovakis} for the existence of \(\mathbf \Delta^{1}_{2}\) sets that lack the
Baire property), this corollary suggests the following question. If $R$ is a Polish ring, $a \in R$ and $B
\subseteq R$ is a set with the Baire property, does $aB$ have the Baire property?

\begin{lemma} \label{lem:LinearImageOfBall}
Let $\alpha ,\beta \in \complexes$ with $\alpha \neq 0$ and $f(\zeta) = \alpha \zeta + \beta$.
Then, for any $\zeta \in \complexes$ and $r > 0$, $f[B(\zeta,r)] = B(f(\zeta), |\alpha| r)$.
\end{lemma}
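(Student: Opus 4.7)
The plan is a direct two-inclusion argument based on the identity $|f(w)-f(\zeta)| = |\alpha|\,|w-\zeta|$, which just expresses that multiplication by $\alpha$ is a similarity of the plane with scale factor $|\alpha|$ and translation by $\beta$ is an isometry.

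For the forward inclusion $f[B(\zeta,r)] \subseteq B(f(\zeta), |\alpha| r)$, I would take an arbitrary $w \in B(\zeta,r)$, so $|w-\zeta| < r$, and then compute
\[
|f(w) - f(\zeta)| = |(\alpha w + \beta) - (\alpha \zeta + \beta)| = |\alpha|\,|w-\zeta| < |\alpha| r,
\]
which places $f(w)$ in $B(f(\zeta), |\alpha| r)$.

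For the reverse inclusion, I would use that $\alpha \neq 0$, so $f$ is a bijection of $\complexes$ onto $\complexes$ with inverse $f^{-1}(v) = (v-\beta)/\alpha$. Given $v \in B(f(\zeta), |\alpha| r)$, set $w = f^{-1}(v)$; then the same identity (applied with $w$ and $\zeta$) gives
\[
|\alpha|\,|w - \zeta| = |f(w) - f(\zeta)| = |v - f(\zeta)| < |\alpha| r,
\]
and dividing by $|\alpha| > 0$ yields $w \in B(\zeta, r)$, so $v = f(w) \in f[B(\zeta, r)]$.

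There is no real obstacle here: the entire content is the multiplicativity of $|\cdot|$ together with $\alpha \neq 0$. The only thing worth being careful about is to invoke bijectivity of $f$ explicitly when producing the preimage $w$, so that the reverse inclusion is genuinely established rather than merely an inclusion of diameters.
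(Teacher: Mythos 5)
Your proof is correct. The paper states this lemma without proof (it is one of the statements the introduction flags as ``easy and left to the reader''), and your two-inclusion argument via $|f(w)-f(\zeta)| = |\alpha|\,|w-\zeta|$, invoking bijectivity of $f$ for the reverse inclusion, is exactly the standard argument one would supply.
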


\begin{lemma} \label{lem:ExteriorSeparates}
Let $\Omega \subseteq \complexes$ be a Borel set with $\interior(\Omega) \ne \emptyset$ and $\interior(\Omega^{c}) \ne \emptyset$.
Then
\begin{displaymath}
	\mathscr{U} = \{ \alpha \Omega + \beta \ | \ \alpha, \beta \in \rationals^{2},\ \alpha \neq 0 \}
\end{displaymath}
is a countable family of Borel sets that separates points.
\end{lemma}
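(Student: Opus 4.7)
The plan is to verify the three required properties in turn: countability, Borelness of each member, and point-separation. Countability is immediate from the countability of $\rationals^{2} \times (\rationals^{2} \setminus \{0\})$. Each affine map $\zeta \mapsto \alpha \zeta + \beta$ with $\alpha \neq 0$ is a homeomorphism of $\complexes$, hence a Borel isomorphism, so $\alpha \Omega + \beta$ is Borel whenever $\Omega$ is.

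The content of the lemma is separation. Fix distinct $x, y \in \complexes$; I want to produce $\alpha, \beta \in \rationals^{2}$ with $\alpha \neq 0$ so that $x \in \alpha \Omega + \beta$ but $y \notin \alpha \Omega + \beta$. Writing $\varphi(\zeta) = \alpha \zeta + \beta$, this is equivalent to arranging $\varphi^{-1}(x) \in \Omega$ and $\varphi^{-1}(y) \notin \Omega$. Use the interior hypotheses to pick $p \in \interior(\Omega)$ and $q \in \interior(\Omega^{c})$, together with some $\varepsilon > 0$ such that $B(p, \varepsilon) \subseteq \Omega$ and $B(q, \varepsilon) \subseteq \Omega^{c}$. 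The exact system $\varphi^{-1}(x) = p$, $\varphi^{-1}(y) = q$ has the unique solution $\alpha_{0} = (x - y)/(p - q)$, $\beta_{0} = x - \alpha_{0} p$, and $\alpha_{0} \neq 0$ because $x \neq y$; here one uses that $p \neq q$ (forced since $\interior(\Omega)$ and $\interior(\Omega^{c})$ are disjoint) to get a well-defined $\alpha_{0}$, and that $x \neq y$ to keep it nonzero.

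To replace $(\alpha_{0}, \beta_{0})$ by a rational pair, note that the map $(\alpha, \beta) \mapsto (\varphi^{-1}(x), \varphi^{-1}(y))$ is continuous on $(\complexes \setminus \{0\}) \times \complexes$, so some open neighborhood of $(\alpha_{0}, \beta_{0})$ is mapped into $B(p, \varepsilon) \times B(q, \varepsilon)$. By density of $\rationals^{2}$ in $\complexes$, a suitable rational $(\alpha, \beta)$ with $\alpha \neq 0$ lies in that neighborhood, and the corresponding set $\alpha \Omega + \beta$ separates $x$ from $y$ as required. The only real obstacle is packaging this perturbation step cleanly; the underlying idea, that nonempty interior on both sides of $\Omega$ lets an affine map steer any two distinct points to prescribed targets inside $\Omega$ and $\Omega^{c}$, drives the whole proof.
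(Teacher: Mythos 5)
Your argument is correct and rests on the same geometric idea as the paper's: because $\interior(\Omega)$ and $\interior(\Omega^{c})$ are both nonempty, any two distinct points can be steered into $\Omega$ and $\Omega^{c}$ respectively by an affine map with rational coefficients. The difference is in how rationality of the coefficients is secured. The paper chooses rational base points $\zeta_{1} \in \rationals^{2} \cap \interior(\Omega)$, $\zeta_{2} \in \rationals^{2} \cap \interior(\Omega^{c})$ and rational approximants $\lambda_{j}$ of the given points $w_{j}$, so that the affine map $\zeta \mapsto \tfrac{\lambda_{2} - \lambda_{1}}{\zeta_{2} - \zeta_{1}}(\zeta - \zeta_{1}) + \lambda_{1}$ has rational coefficients by construction, and then verifies by an explicit inequality that each $w_{j}$ lands in the correct ball. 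You instead solve the exact interpolation system to get (generally irrational) $(\alpha_{0}, \beta_{0})$ and invoke continuity of $(\alpha, \beta) \mapsto (\varphi^{-1}(x), \varphi^{-1}(y))$ on $(\complexes \setminus \{0\}) \times \complexes$ together with density of $\rationals^{2} \times \rationals^{2}$ to perturb into a rational pair. Your route substitutes a soft topological perturbation in parameter space for the paper's quantitative estimate in target space; both are valid, and yours avoids the $\varepsilon$-bookkeeping at the cost of being less explicit about the admissible perturbation radius. One small point you correctly flag but could state more emphatically: the neighborhood of $(\alpha_{0}, \beta_{0})$ lives inside $(\complexes \setminus \{0\}) \times \complexes$, which is already open, so no extra care is needed to keep $\alpha \neq 0$ after perturbation.
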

\begin{proof}
$\mathscr U$ is a countable family of Borel sets since Borel sets are invariant under homeomorphisms.  We must
see that $\mathscr{U}$ separates points. Let
$\zeta_{1} \in \rationals^{2} \cap \interior(\Omega)$, $\zeta_{2} \in \rationals^{2} \cap
\interior(\Omega^{c})$, and $r > 0$ be so that $B(\zeta_{1},r) \subseteq \interior(\Omega)$ and
$B(\zeta_{2},r) \subseteq \interior(\Omega^{c})$.

Now, let $w_{1}$, $w_{2} \in \complexes$ be arbitrary so that $w_{1} \neq w_{2}$ and pick $\varepsilon > 0$ so small that
\begin{displaymath}
2\varepsilon < |w_{1} - w_{2}| \text{ and } \varepsilon|\zeta_{1} - \zeta_{2}| + 2r\varepsilon < |w_{1} - w_{2}|r.
\end{displaymath}
Pick $\lambda_{j} \in B(w_{j},\varepsilon) \cap \rationals^{2}$ where $1 \le j \le 2$.   Notice that $B(w_{1},\varepsilon) \cap
B(w_{2},\varepsilon) = \emptyset$ since $2\varepsilon < |w_{1} - w_{2}|$ and therefore $0 < |\lambda_{1} - \lambda_{2}|$.
Also easily check that $|w_{1} - w_{2}| < |\lambda_{1} - \lambda_{2}| + 2\varepsilon$ by the triangle inequality.

Let $\varphi(\zeta) = \dfrac{\lambda_{2} - \lambda_{1}}{\zeta_{2} - \zeta_{1}}(\zeta -\zeta_{1}) + \lambda_{1}
= \alpha \zeta + \beta$, where $\alpha = \dfrac{\lambda_{2} - \lambda_{1}}{\zeta_{2} - \zeta_{1}}$ and $\beta =
\dfrac{\lambda_{1} \zeta_{2} - \lambda_{2} \zeta_{1}}{\zeta_{2} - \zeta_{1}}$ are both complex rationals.
Notice that $\varphi(\zeta_{1}) = \lambda_{1}$ and $\varphi(\zeta_{2}) = \lambda_{2}$. We will prove that
$\alpha \Omega + \beta$ separates $w_{1}$ and $w_{2}$.

$\varphi(\zeta_{j}) = \lambda_{j}$ and therefore $\varphi[B(\zeta_{j},r)] = B\left(\lambda_{j},\dfrac{|\lambda_{1} -
\lambda_{2}|r}{|\zeta_{1} - \zeta_{2}|}\right)$ by Lemma \ref{lem:LinearImageOfBall}.

$|w_{j} - \lambda_{j}| < \varepsilon < \dfrac{(|w_{1} - w_{2}| - 2\varepsilon)r}{|\zeta_{1} - \zeta_{2}|} <
\dfrac{|\lambda_{1} - \lambda_{2}|r}{|\zeta_{1} - \zeta_{2}|}$ and therefore $w_{j} \in
\varphi[B(\zeta_{j},r)]$, ($1 \le j \le 2$).  Hence, $w_{1} \in \alpha \Omega + \beta$ and $w_{2} \not \in
\alpha \Omega + \beta$,
\end{proof}

\begin{proposition} \label{prop:ContinuityByExterior}
Let $R$ be a Polish ring and $\varphi : R \to \complexes$ be an abstract ring isomorphism. If there exists
a Borel set $\Omega \subseteq \complexes$ with $\interior(\Omega) \ne \emptyset$ and $\interior(\Omega^{c})
\ne \emptyset$ so that $\varphi^{-1}[\Omega]$ has the Baire property, then $\varphi$ is a topological
isomorphism.
\end{proposition}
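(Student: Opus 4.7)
The plan is to realize $\varphi$ as a Baire property measurable additive group homomorphism between Polish groups and then invoke Theorem \ref{theorem:becker-kechris}. To get Baire property measurability for free from a single Borel set $\Omega$, I would combine Lemma \ref{lem:ExteriorSeparates} (a separating family of affine images of $\Omega$), Mackey's Theorem \ref{theorem:Mackey} (such a separating family generates $\mathscr{B}(\complexes)$), and Corollary \ref{corollary:ring-Baire} (affine images in a Polish ring by a unit preserve the Baire property).

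More concretely, apply Lemma \ref{lem:ExteriorSeparates} to the given $\Omega$ to obtain the countable separating family
\begin{displaymath}
\mathscr{U} = \{ \alpha \Omega + \beta \mid \alpha, \beta \in \rationals^{2}, \ \alpha \neq 0 \}
\end{displaymath}
of Borel subsets of $\complexes$. Fix $U = \alpha \Omega + \beta \in \mathscr{U}$. Because $\varphi$ is a ring isomorphism and $\alpha$ is a unit in $\complexes$, $\varphi^{-1}(\alpha)$ is a unit in $R$, and
\begin{displaymath}
\varphi^{-1}[U] = \varphi^{-1}(\alpha) \cdot \varphi^{-1}[\Omega] + \varphi^{-1}(\beta).
\end{displaymath}
Since $\varphi^{-1}[\Omega]$ has the Baire property by hypothesis, Corollary \ref{corollary:ring-Baire} gives that $\varphi^{-1}[U]$ has the Baire property in $R$.

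Next, let $\mathscr{A} = \{ A \subseteq \complexes \mid \varphi^{-1}[A] \text{ has the Baire property} \}$. Since $\varphi^{-1}$ commutes with complements and countable unions, and the family of sets with the Baire property is a $\sigma$-algebra, $\mathscr{A}$ is a $\sigma$-algebra. By the previous paragraph $\mathscr{U} \subseteq \mathscr{A}$, and by Lemma \ref{lem:ExteriorSeparates} together with Theorem \ref{theorem:Mackey} the family $\mathscr{U}$ generates $\mathscr{B}(\complexes)$. Therefore $\mathscr{B}(\complexes) \subseteq \mathscr{A}$; in particular, the preimage under $\varphi$ of every open set has the Baire property, so $\varphi : (R,+) \to (\complexes,+)$ is a Baire property measurable homomorphism of Polish groups. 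Theorem \ref{theorem:becker-kechris} now yields that $\varphi$ is continuous. Since $\varphi$ is an abstract isomorphism, $\varphi[R] = \complexes$ is certainly not meager, so Theorem \ref{theorem:becker-kechris} also gives that $\varphi$ is open. Hence $\varphi$ is a topological isomorphism.

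The only real obstacle is the second step: one must see that the single Baire property hypothesis on $\varphi^{-1}[\Omega]$ propagates to the whole separating family $\mathscr{U}$. This is where the ring structure does essential work, because one needs $\varphi^{-1}(\alpha)$ to be invertible in $R$ in order to bring Corollary \ref{corollary:ring-Baire} to bear; everything else is a routine $\sigma$-algebra bookkeeping followed by a direct appeal to the Becker--Kechris automatic continuity theorem.
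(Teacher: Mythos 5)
Your proof is correct and follows essentially the same route as the paper's: apply Lemma \ref{lem:ExteriorSeparates} and Theorem \ref{theorem:Mackey} to see that the affine images of $\Omega$ generate $\mathscr{B}(\complexes)$, use Corollary \ref{corollary:ring-Baire} (with $\varphi^{-1}(\alpha)$ a unit in $R$) to propagate the Baire property from $\varphi^{-1}[\Omega]$ to the preimage of each generator, and then invoke Theorem \ref{theorem:becker-kechris}. The only difference is that you spell out the $\sigma$-algebra bookkeeping and the unit-preservation step explicitly, which the paper leaves implicit.
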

\begin{proof}
First, apply Lemma \ref{lem:ExteriorSeparates} and then Theorem \ref{theorem:Mackey} to see that $\{ \alpha
\Omega + \beta | \alpha, \beta \in \rationals^{2}, \alpha \neq 0\}$ generates the Borel structure of
$\complexes$. Then, since $\varphi^{-1}[\Omega]$ has the Baire property and $R$ is a Polish ring, we have that
$\varphi^{-1}(\alpha)\varphi^{-1}[\Omega] + \varphi^{-1}(\beta)$ is a set with the Baire property for every
$\alpha, \beta \in \complexes$, $\alpha \neq 0$ by Lemma \ref{corollary:ring-Baire}.  That is, $\varphi$ is a
$\mathscr{BP}$-measurable isomorphism so Theorem \ref{theorem:becker-kechris} applies to conclude that
$\varphi$ is a topological isomorphism.
\end{proof}


\section{General Polish Ring Preliminaries}

This section is devoted to some general facts about Polish rings and Polish $\complexes$-vector spaces.

\begin{proposition} \label{prop:InvertiblesAreBorel}
Let $R$ by any Polish ring with unity and let
\begin{displaymath}
	\mathcal{I}_{R} = \{ x \in R \ | \ x \text{ has a right inverse } \}.
\end{displaymath}
Then $\mathcal{I}_{R}$ is an analytic set.  $\mathcal{I}_{R}$ is a Borel set if $R$ is commutative.
\end{proposition}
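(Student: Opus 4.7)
The plan is to exhibit $\mathcal{I}_R$ as the projection of a closed set in a product, then use uniqueness of inverses in the commutative case to apply the Lusin--Suslin theorem.

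First I would consider the set
\begin{displaymath}
E = \{(x,y) \in R \times R \ | \ xy = 1_R\}.
\end{displaymath}
Since multiplication $m : R \times R \to R$ is continuous by the definition of a Polish ring and $\{1_R\}$ is closed in $R$, the set $E = m^{-1}(\{1_R\})$ is closed in $R \times R$, hence a Polish space in its own right. Now $\mathcal{I}_R = \pi_1[E]$, where $\pi_1 : R \times R \to R$ is the (continuous) first-coordinate projection. Since a continuous image of a Borel set in a Polish space is analytic, $\mathcal{I}_R$ is analytic. This handles the first assertion with no obstacle.

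For the commutative case, the key observation is that right inverses in a commutative ring are two-sided and unique: if $xy = xy' = 1_R$, then
\begin{displaymath}
y = y \cdot 1_R = y (x y') = (yx) y' = (xy) y' = 1_R \cdot y' = y'.
\end{displaymath}
Consequently, the continuous projection $\pi_1 : E \to R$ is injective. Since $E$ is Polish (as a closed subset of the Polish space $R \times R$) and $R$ is Polish, the Lusin--Suslin theorem (an injective Borel, in particular continuous, image of a standard Borel set between Polish spaces is Borel) gives that $\pi_1[E] = \mathcal{I}_R$ is a Borel subset of $R$.

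The main thing to verify carefully is the injectivity step, i.e., that commutativity is genuinely used to force uniqueness of the inverse; without it, an element could have many right inverses and the projection would fail to be injective. Once this is in hand, the invocation of Lusin--Suslin is standard. No other subtle obstacle is expected: the argument is entirely structural, relying only on the continuity of the ring operations and classical descriptive set theory.
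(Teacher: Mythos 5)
Your argument is correct and follows essentially the same route as the paper: realize $\mathcal{I}_R$ as the projection of the closed set $\{(x,y) : xy = 1_R\}$, so it is analytic; and in the commutative case observe that the projection restricted to this set is injective (by uniqueness of inverses), so Lusin--Suslin gives Borelness. The paper's uniqueness computation ($y_1 = y_1 x y_2 = x y_1 y_2 = y_2$) is the same observation you make.
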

\begin{proof}
Notice that
\begin{displaymath}
	\mathcal{I}_{R} = \{ x \in R \ | \text{ there exists } y \in R \text{ with } xy = 1_{R} \}.
\end{displaymath}
Let $A = \{ \langle x,y \rangle \in R^{2} \ | \ xy = 1_{R} \}$ and $\pi : R^{2} \to R$ be the projection $\langle x,y \rangle \mapsto x$.
$A$ is closed in $R^{2}$ since multiplication is continuous.  $\mathcal{I}_{R}$ is an analytic set since it is
the range of $\pi$.

Suppose $R$ is commutative. To see that $\mathcal{I}_{R}$ is Borel, it suffices to check that $\pi \restriction_A$ is
injective. To see this, suppose that $\langle x,y_{1} \rangle$, $\langle x,y_{2} \rangle \in A$. Then $xy_{1} = 1_{R} = xy_{2} \implies
y_{1} = y_{1}xy_{2} = xy_{1}y_{2} = y_{2}$.
\end{proof}

Is $\mathcal{I}_{R}$ always a Borel set?

\begin{proposition} \label{prop:ContinuityOfInversion}
Let $R$ be a commutative Polish ring with unity.
Then the map $x \mapsto x^{-1}$, $\mathcal{I}_{R} \to \mathcal{I}_R$, is continuous if and only if
$\mathcal{I}_{R}$ is a $G_{\delta}$ subset of $R$.
\end{proposition}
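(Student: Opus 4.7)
The plan is to split the biconditional into its two implications.

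For the forward direction, assuming $\iota$ is continuous, I would use $\iota$ to realize $\mathcal{I}_R$ as homeomorphic to a closed subset of $R \times R$. Specifically, define $\sigma : \mathcal{I}_R \to R \times R$ by $\sigma(x) = (x, x^{-1})$; by continuity of $\iota$, $\sigma$ is continuous, and its image is the closed set $A = \{(x, y) \in R \times R : xy = 1_R\}$ (closed because multiplication on $R$ is continuous). By commutativity of $R$ — exactly as in the proof of Proposition~\ref{prop:InvertiblesAreBorel} — the first-coordinate projection $\pi_1 : A \to R$ is an injection onto $\mathcal{I}_R$ and is the two-sided inverse of $\sigma$. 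Hence $\sigma$ is a homeomorphism of $\mathcal{I}_R$ (in the subspace topology) onto the Polish space $A$, so $\mathcal{I}_R$ is Polish in its subspace topology. The classical fact that a subspace of a Polish space is Polish iff it is $G_\delta$ then yields the conclusion.

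For the backward direction, assume $\mathcal{I}_R$ is $G_\delta$ in $R$, so that $\mathcal{I}_R$ is Polish in its subspace topology. Multiplication restricts to a jointly continuous operation on $\mathcal{I}_R \times \mathcal{I}_R$, making $(\mathcal{I}_R, \cdot)$ a Polish space which is a group with jointly continuous multiplication. I would then invoke the classical automatic-continuity theorem that a \v{C}ech-complete semitopological group is in fact a topological group — Ellis's theorem in the locally compact case, extended to the \v{C}ech-complete (and hence Polish) setting by Bouziad — which yields continuity of $\iota$ immediately.

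The main obstacle is the backward direction: moving from ``jointly continuous multiplication'' to ``continuous inversion'' requires a non-trivial automatic-continuity input beyond the Borel and Baire-category tools of Section~2. A tempting direct route is to define $d^{\ast}(x, y) = d(x, y) + d(x^{-1}, y^{-1})$ on $\mathcal{I}_R$ (with $d$ a complete metric on $R$); one checks easily that $d^{\ast}$ is complete and makes inversion a $d^{\ast}$-isometry, giving a Polish topology $\tau^{\ast}$ finer than the subspace topology in which $\mathcal{I}_R$ is a Polish topological group homeomorphic to $A$. The difficulty is showing that $\tau^{\ast}$ coincides with the subspace topology, and Theorem~\ref{theorem:becker-kechris} cannot be applied in an obviously non-circular way, since it demands that its source already be a Polish group rather than merely a Polish space with continuous multiplication.
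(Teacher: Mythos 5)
Your forward direction is identical to the paper's: push $\mathcal{I}_R$ homeomorphically onto the closed set $A = \{(x,y) : xy = 1_R\}$ via $x \mapsto (x, x^{-1})$ and conclude Polishability, hence $G_\delta$. The backward direction is where you diverge. The paper argues that the first-coordinate projection $\pi_1 : A \to \mathcal{I}_R$ is a continuous bijection of Polish spaces, so by Luzin--Suslin its inverse $x \mapsto (x, x^{-1})$ is Borel, hence inversion $= \pi_2 \circ \pi_1^{-1}$ is Borel, and then asserts that this gives continuity. That final step is not self-justifying: a Borel map between Polish spaces need not be continuous, and Theorem~\ref{theorem:becker-kechris} cannot be invoked because $\mathcal{I}_R$ is not yet known to be a topological group (precisely the circularity you flag at the end). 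One must either appeal to an automatic-continuity theorem for paratopological groups, or run a Pettis-type Baire-category argument directly on $\mathcal{I}_R$ (Pettis's lemma does go through for Polish paratopological groups since all translations are homeomorphisms there, and then the usual open-mapping argument applied to $\pi_1 : A \to \mathcal{I}_R$ closes the gap). Your route is to bypass the Luzin--Suslin step entirely and cite Bouziad's theorem that a \v{C}ech-complete (in particular Polish) semitopological group is a topological group; since $\mathcal{I}_R$ is Polish once it is $G_\delta$ and ring multiplication is jointly continuous, continuity of inversion follows at once. This is a valid proof, and in fact it is more forthright than the paper's since it names the automatic-continuity input that the paper leaves implicit; the tradeoff is reliance on a deeper external theorem where the paper's setup suggests a category-theoretic argument could be carried out in-house. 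Your diagnosis of why the metric $d^{\ast}$ route stalls is also correct: it cleanly produces a finer Polish group topology homeomorphic to $A$, but Theorem~\ref{theorem:becker-kechris} cannot then be used to identify it with the subspace topology without already knowing the latter is a group topology.
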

\begin{proof}
First assume that $x \mapsto x^{-1}$, $\mathcal{I}_{R} \to \mathcal{I}_{R}$, is continuous. Let $A = \{
\langle x,y \rangle \in R^{2} \ | \ xy = 1_{R} \}$ and notice that $A$ is closed in $R^{2}$. The canonical projection $\pi :
R^{2} \to R$ defined by $\langle x,y \rangle \mapsto x$ is injective when restricted to $A$. Since $x \mapsto x^{-1}$,
$\mathcal{I}_{R} \to \mathcal{I}_{R}$, is continuous, we see that $x \mapsto \langle x,x^{-1} \rangle$, $\mathcal{I}_{R}
\to A$, is continuous. So $\pi\restriction_A$ is a homeomorphism which implies that $\mathcal{I}_{R}$ is Polishable
and thus is a $G_{\delta}$ subset of $R$.

Conversely, suppose that $G = \mathcal{I}_{R}$ is a $G_{\delta}$ subset of $R$.  Therefore $G$ is Polishable.
Notice that the map $x\mapsto x^{-1}$, $G \to G$, is a multiplicative group isomorphism.
Let $A = \{ \langle x,y \rangle \in G^{2} \ | \ xy = 1_{R} \}$ and notice that $A$ is closed in $G^{2}$ since $(x,y) \mapsto
xy$, $G^{2} \to G$, is continuous.

The mappings $A \to G$, $\langle x,x^{-1} \rangle \mapsto x$ and $\langle x,x^{-1} \rangle \mapsto x^{-1}$ are continuous bijections.
Therefore the mapping $x \mapsto \langle x,x^{-1} \rangle$ is a Borel mapping by the Luzin-Suslin Theorem.  Hence, the
mapping $x \mapsto \langle x,x^{-1} \rangle \mapsto x^{-1}$ is a Borel mapping. Hence, $x \mapsto x^{-1}$, $\mathcal{I}_{R}
\to \mathcal{I}_{R}$, is continuous.
\end{proof}

From Proposition \ref{prop:ContinuityOfInversion} we see that, for any Polish field $\mathbb{F}$, the
multiplicative inversion $x \mapsto x^{-1}$, $\mathbb{F} \setminus \{0\} \to \mathbb{F} \setminus \{0\}$,
is continuous.

\begin{proposition} \label{proposition:M-analytic}
Let $R$ be a Polish ring. If $M \subseteq R$ is a principal one-sided-ideal, then $M$ is an analytic set.
\end{proposition}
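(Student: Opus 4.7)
The plan is to express the principal one-sided ideal $M$ as the continuous image of a Polish space, and then invoke the standard fact that continuous images of Polish (and more generally analytic) spaces are analytic.

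First I would fix notation: by symmetry, suppose $M$ is a principal left ideal generated by some $a \in R$, so that $M = (a)_{L}$. The exact description of $(a)_{L}$ depends on whether $R$ has a multiplicative identity. If $R$ has unity, then $(a)_{L} = Ra = \{ra : r \in R\}$, which is the image of the continuous map $\rho_{a} : R \to R$ defined by $\rho_{a}(r) = ra$. Since $R$ is Polish and $\rho_{a}$ is continuous (as right multiplication by $a$ is continuous in any Polish ring), $M = \rho_{a}[R]$ is an analytic subset of $R$.

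If $R$ does not have unity, then the principal left ideal generated by $a$ is
\begin{displaymath}
    (a)_{L} = \{na + ra : n \in \integers, \ r \in R\},
\end{displaymath}
because one must include the cyclic subgroup of the additive group generated by $a$. Then $(a)_{L}$ is the image of the Polish space $\integers \times R$ (endowed with the product topology, discrete on $\integers$) under the continuous map $(n,r) \mapsto na + ra$, where continuity follows from joint continuity of addition and right multiplication by $a$. Hence $M$ is again analytic. The same argument, with right and left interchanged, handles the case of a principal right ideal.

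There is no real obstacle here; the only minor point to get right is the non-unital case, where one must not forget the $\integers a$ summand in the description of the principal ideal. Once that is handled, the result is immediate from the elementary fact that any continuous image of a Polish space is analytic.
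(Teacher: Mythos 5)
Your proof is correct and takes essentially the same approach as the paper: realize $M$ as the continuous image of a Polish space under right (or left) multiplication by the generator and then invoke the fact that continuous images of Polish spaces are analytic. You are in fact a bit more careful than the paper's proof, which simply writes $M = \{yx : x \in R\}$ and thus implicitly assumes unity; your inclusion of the $\integers a$ summand correctly covers the non-unital case.
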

\begin{proof}
The left ideal and right ideal cases are similar. For the right-ideal case, let $y \in R$ be the generator for
$M$. That is, $M = \{ yx \ | \  x \in R \}$. Since multiplication is continuous, the mapping $x \mapsto yx$,
$R \to R$, is continuous and $M$ is the image of $R$ under $x \mapsto yx$. Therefore, $M$ is an analytic
set.
\end{proof}

\begin{proposition} \label{proposition:complex-vector-spaces}
Let $X$ be an arbitrary complex topological vector space and $0 \neq x_{0} \in X$. The mapping $\lambda
\mapsto \lambda x_{0}$, $\complexes \to X$, is a homeomorphism onto its range and its range is closed in
$X$.
\end{proposition}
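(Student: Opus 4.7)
The plan is to verify three things about $f : \complexes \to X$, $f(\lambda) = \lambda x_0$: continuity, that it is a topological embedding (i.e., that $f^{-1}$ is continuous on the range), and that the range is closed. Continuity and injectivity are immediate: scalar multiplication $\complexes \times X \to X$ is jointly continuous by definition of a topological vector space, so $\lambda \mapsto \lambda x_0$ is continuous; and if $\lambda x_0 = \mu x_0$, then $(\lambda-\mu)x_0 = 0$, forcing $\lambda = \mu$ since $x_0 \neq 0$ in a (Hausdorff) TVS.

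For the embedding property, I would argue by sequences (or nets): assume $\lambda_n x_0 \to \mu x_0$ in $X$, and show $\lambda_n \to \mu$ in $\complexes$. Suppose not; pass to a subsequence with $|\lambda_n - \mu| \geq \varepsilon > 0$ for all $n$. Split into two cases using local compactness of $\complexes$. If $(\lambda_n)$ is bounded, extract a further subsequence $\lambda_{n_k} \to \nu$ in $\complexes$; then continuity of $f$ gives $\lambda_{n_k} x_0 \to \nu x_0$, and uniqueness of limits forces $\nu x_0 = \mu x_0$, hence $\nu = \mu$, contradicting $|\nu - \mu| \geq \varepsilon$. If $(\lambda_n)$ is unbounded, pass to a subsequence with $|\lambda_n| \to \infty$, so $1/\lambda_n \to 0$; then joint continuity of scalar multiplication applied to the convergent sequence $\lambda_n x_0 \to \mu x_0$ and the scalar sequence $1/\lambda_n \to 0$ yields
\begin{displaymath}
x_0 = \tfrac{1}{\lambda_n}(\lambda_n x_0) \longrightarrow 0 \cdot \mu x_0 = 0,
\end{displaymath}
contradicting $x_0 \neq 0$.

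The same dichotomy handles closedness of the range. If $\lambda_n x_0 \to y$ in $X$ and $(\lambda_n)$ is bounded, then a convergent subsequence $\lambda_{n_k} \to \lambda$ yields $y = \lambda x_0$ in the range. If $(\lambda_n)$ is unbounded, the trick above with $1/\lambda_n \to 0$ again forces $x_0 = 0$, which is impossible. Thus every limit point of the range lies in the range.

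The only subtle point is the unbounded case, where one must invoke joint continuity of scalar multiplication (rather than just separate continuity) to pass to the limit in $(1/\lambda_n)(\lambda_n x_0)$; this is precisely where the TVS axioms, beyond abelian-group plus separate continuity, earn their keep. Everything else is a routine application of the local compactness of $\complexes$ and uniqueness of limits.
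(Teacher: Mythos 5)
Your argument is correct and takes a genuinely different route from the paper, whose entire proof is a one-line citation to the uniqueness theorem for finite-dimensional topological vector spaces (Kelley--Namioka, 7.3): any Hausdorff linear topology on $\complexes^{n}$ coincides with the Euclidean topology, and finite-dimensional subspaces are automatically closed. You instead reprove the one-dimensional case from scratch via the dichotomy on the scalars: a net $\lambda_{\alpha}$ with $\lambda_{\alpha}x_{0}$ convergent either has a subnet converging in $\complexes$ (handled by continuity and uniqueness of limits) or a subnet escaping to $\infty$ (killed by $x_{0} = (1/\lambda_{\alpha})(\lambda_{\alpha}x_{0}) \to 0\cdot y = 0$, using joint continuity of scalar multiplication and $x_{0}\neq 0$). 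The paper's route buys brevity and covers all finite dimensions at once; yours buys self-containment and makes visible exactly where the TVS axiom of joint continuity earns its keep, as you note. One technical caveat: since $X$ is an arbitrary, not necessarily metrizable, complex TVS, the argument must be run with nets rather than sequences, as you parenthetically flag but do not actually carry out; the sequence version as written only establishes the proposition for first-countable $X$. The upgrade is routine: compactness of the Riemann sphere $\complexes\cup\{\infty\}$ replaces Bolzano--Weierstrass to extract the convergent-or-escaping subnet, and Hausdorffness of $X$ supplies uniqueness of net limits. For the paper's application ($X = \mathcal{A}(\Omega)$, a Polish space) sequences do suffice, but a proof of the proposition as stated requires the net version.
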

\begin{proof}
This is a special case of the uniqueness theorem for finite dimensional spaces (\cite{kelley-namioka-1976a},
7.3, page 59).
\end{proof}

The ring/field of complex numbers $\complexes$ has $2^{\mathfrak{c}}$ ring/field automorphisms, only two of
which, namely the identity and complex conjugation, are continuous.

\begin{theorem} [\cite{kallman-simmons-1985a}] \label{theorem:bounded-means-continuous}
Any automorphism of $\complexes$ which is bounded on an $F_{\sigma}$ subset of the plane of positive inductive
dimension is necessarily continuous.
\end{theorem}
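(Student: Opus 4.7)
The plan is to show that $\varphi$ is bounded on some nonempty open subset of $\complexes$; from this, continuity of $\varphi$ at $0$ (and hence everywhere by additivity) follows by a standard $\rationals$-scaling argument: if $|\varphi(z)| \leq M'$ on an open ball $B(0,\delta)$, then for $|z| < \delta/n$ with $n \in \naturals$ one has $nz \in B(0,\delta)$, and since $n \in \rationals$ is fixed by $\varphi$, $|\varphi(z)| = |\varphi(nz)|/n \leq M'/n$. Alternatively, boundedness on an open set lets one invoke Proposition~\ref{prop:ContinuityByExterior} with $\Omega = \overline{B(0,M')}$.

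The first step is a dimension-theoretic reduction. By the countable sum theorem---a countable union of closed zero-dimensional subsets of a separable metric space is zero-dimensional---some closed piece $K$ in the $F_\sigma$ decomposition $F = \bigcup_n F_n$ must have positive inductive dimension. Being closed and not totally disconnected, $K$ contains a non-degenerate continuum $C$, and $\varphi[C] \subseteq \overline{B(0,M)}$. After possibly composing $\varphi$ with complex conjugation, I may assume $\varphi(i) = i$, so $\varphi$ fixes $\rationals^2$ pointwise and is $\rationals^2$-linear. Consequently, for any $\alpha_0, \alpha_1, \ldots, \alpha_n \in \rationals^2$ and $c_1, \ldots, c_n \in C$,
\begin{displaymath}
\bigl|\varphi(\alpha_0 + \alpha_1 c_1 + \cdots + \alpha_n c_n)\bigr| \leq |\alpha_0| + M(|\alpha_1| + \cdots + |\alpha_n|),
\end{displaymath}
so $\varphi$ is uniformly bounded on every Minkowski combination $\alpha_0 + \alpha_1 C + \cdots + \alpha_n C$ whose coefficient norms sum to at most a given bound.

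The heart of the argument is to produce such a Minkowski combination with nonempty interior. When $C$ is contained in a real line through the origin, the sum $C + iC$ is easily seen to be a nondegenerate rectangle; when $C$ contains two points $c_1, c_2$ that are $\reals$-linearly independent, a suitable $\alpha C + \beta C$ contains a parallelogram with nonempty interior. The main obstacle is handling exotic 1-dimensional continua---such as pseudo-arcs or other hereditarily indecomposable continua that contain no line segments and whose iterated Minkowski sums may remain nowhere dense. Ruling these out and extracting an open set from Minkowski combinations of $C$ requires the full strength of positive inductive dimension (as opposed to mere uncountability or perfectness), and is the content of the dimension-theoretic argument in~\cite{kallman-simmons-1985a}.
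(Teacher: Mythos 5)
The paper does not prove this theorem at all: it is cited from \cite{kallman-simmons-1985a} and used as a black box, so there is no internal argument to compare against. Your reductive steps are correct as far as they go: the countable closed sum theorem isolates a closed planar set $K$ of positive dimension inside the given $F_\sigma$; total disconnectedness and zero-dimensionality coincide for locally compact separable metric spaces, so $K$ is not totally disconnected and hence (by the usual boundary-bumping argument inside a small closed disk) contains a nondegenerate continuum $C$; and after composing with conjugation, $\varphi$ fixes $\rationals^2$ and is therefore uniformly bounded on finite $\rationals^2$-Minkowski combinations of $C$. The scaling argument from boundedness near $0$ to continuity at $0$ is also sound. (The parenthetical appeal to Proposition~\ref{prop:ContinuityByExterior}, however, is a slip: boundedness of $\varphi$ on an open set $U$ only gives $U \subseteq \varphi^{-1}[\overline{B(0,M')}]$, which does not by itself show that this preimage has the Baire property.)

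The genuine gap is the geometric step you defer to. You split into the case where $C$ lies on a line (handled, via $C + iC$) and the case where $C$ contains two $\reals$-linearly independent points, for which you assert that some $\alpha C + \beta C$ ``contains a parallelogram with nonempty interior''---and then immediately, and correctly, observe that this is unclear for hereditarily indecomposable continua such as the pseudo-arc. But a pseudo-arc \emph{is} a nondegenerate continuum containing two $\reals$-independent points, so the case you flag as problematic is not a third case; it is the generic instance of the very case you claimed to dispatch. That is, the dichotomy does not actually reduce the problem, and the substantive content---that finitely many field operations with rational coefficients applied to an arbitrary nondegenerate planar continuum yield a set with nonempty interior---is left entirely to the reference. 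It is also not addressed whether additive Minkowski combinations suffice at all, or whether the Kallman--Simmons argument essentially needs the multiplicative structure of $\complexes$; without settling that, one cannot even be sure the additive strategy you outline is salvageable. In short, this is a correct setup that punts on exactly the theorem the paper itself punts on, and does not constitute a proof.
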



\section{The Ring of Analytic Functions and Special Cases}  \label{section-roafasc}

Let $\Omega \subseteq \complexes$ be open. $\mathcal{A}(\Omega)$ denotes the collection of complex analytic
functions on $\Omega$ endowed with the compact-open topology, i.e., the topology of uniform convergence on
compact sets.  $\mathcal{A}(\Omega)$ is a commutative Polish $\complexes$-algebra with unity and therefore a
commutative Polish ring with unity with the algebraic operations of point-wise addition and point-wise
multiplication.  If $\lambda \in \complexes$, let $c_{\lambda} \in \mathcal{A}(\Omega)$ be the constant
function taking the value $\lambda$.  The mapping $\lambda \mapsto c_{\lambda}$, $\complexes \to
\mathcal{A}(\Omega)$, is continuous and is therefore a homeomorphism onto its range by Proposition
\ref{proposition:complex-vector-spaces}.  In the future, $\lambda$ will be identified algebraically and topologically
with $c_{\lambda}$.

Several of the following propositions are either well known or elementary and their proof is left to the
reader.

\begin{proposition} \label{proposition:connected-iff-integral-domain}
Let $\Omega$ be open.  Then $\Omega$ is connected if and only if $\mathcal{A}(\Omega)$ is an integral domain.
\end{proposition}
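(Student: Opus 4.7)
The plan is to prove each direction separately, using the identity theorem for the forward direction and an explicit construction of zero divisors for the reverse direction.

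For the forward implication, suppose $\Omega$ is connected and $f,g \in \mathcal{A}(\Omega)$ satisfy $fg = 0$. I would argue by contradiction: assume $f \not\equiv 0$ on $\Omega$. Pick $z_0 \in \Omega$ with $f(z_0) \neq 0$; by continuity there is an open neighborhood $U$ of $z_0$ on which $f$ never vanishes. Then $g \equiv 0$ on $U$, so by the identity theorem (which applies because $\Omega$ is connected and $g$ vanishes on a set with an accumulation point in $\Omega$), $g \equiv 0$ on all of $\Omega$. Hence $\mathcal{A}(\Omega)$ is an integral domain.

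For the reverse implication, I would prove the contrapositive. If $\Omega$ is not connected, write $\Omega = \Omega_1 \cup \Omega_2$ where $\Omega_1,\Omega_2$ are disjoint, nonempty, open subsets of $\complexes$. Define
\begin{displaymath}
f(z) = \begin{cases} 1 & z \in \Omega_1 \\ 0 & z \in \Omega_2 \end{cases}, \qquad g(z) = \begin{cases} 0 & z \in \Omega_1 \\ 1 & z \in \Omega_2 \end{cases}.
\end{displaymath}
Both $f$ and $g$ are locally constant on $\Omega$, hence analytic, and neither is the zero function. Yet $fg \equiv 0$ on $\Omega$, so $\mathcal{A}(\Omega)$ has zero divisors and is not an integral domain.

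There is no real obstacle; this is a classical fact and both directions are short. The only substantive input is the identity theorem in the forward direction, and the observation that locally constant functions are analytic in the reverse direction. Each direction is a few lines.
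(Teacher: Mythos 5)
Your proof is correct, and it is precisely the standard argument the paper has in mind: the paper itself states this proposition without proof, noting that it is "well known or elementary and\ldots left to the reader." Both directions of your argument (identity theorem for the forward implication, locally constant indicator functions on a disconnection for the reverse) are exactly the intended route.
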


\begin{proposition} \label{proposition:rings-disconnected-open-sets}
Let $\Omega = \bigcup_{n < \kappa} \Omega_{n} \subseteq \complexes$ be open, where each $\Omega_{n}$ is open and
connected, where the $\Omega_{n}$'s are pairwise disjoint and where $\kappa \leq \aleph_{0}$. The rings
$\mathcal{A}(\Omega)$ and $\prod_{n < \kappa} \mathcal{A}(\Omega_n)$ are topologically ring isomorphic.
\end{proposition}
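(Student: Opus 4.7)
The plan is to show that the natural restriction map is a topological ring isomorphism. Define
\[
\Phi \colon \mathcal{A}(\Omega) \to \prod_{n < \kappa} \mathcal{A}(\Omega_{n}), \qquad \Phi(f) = \bigl(f\!\restriction_{\Omega_{n}}\bigr)_{n < \kappa}.
\]
Since the ring operations on both sides are pointwise and restriction respects pointwise operations, $\Phi$ is a ring homomorphism sending $1$ to $1$. It is injective because an analytic function vanishing on each $\Omega_{n}$ vanishes on $\Omega$. It is surjective because the $\Omega_{n}$ are pairwise disjoint open sets, so any family $(g_{n})_{n<\kappa}$ with $g_{n} \in \mathcal{A}(\Omega_{n})$ can be patched into a single function on $\Omega$ whose restriction to each $\Omega_{n}$ is analytic, hence analytic on $\Omega$.

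For continuity of $\Phi$, I would argue directly at $0$. A basic neighborhood of $0$ in the product topology is determined by a finite set $F \subseteq \kappa$, compact sets $K_{n} \subseteq \Omega_{n}$ for $n \in F$, and $\varepsilon > 0$. The set $K := \bigcup_{n \in F} K_{n}$ is compact in $\Omega$, and the seminorm condition $\sup_{K}|f| < \varepsilon$ clearly implies $\sup_{K_{n}}|f\!\restriction_{\Omega_{n}}| < \varepsilon$ for each $n \in F$. Since $\Phi$ is an additive group homomorphism between Polish additive groups and is continuous at $0$, it is continuous.

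For continuity of $\Phi^{-1}$, the key observation is that any compact $K \subseteq \Omega$ meets only finitely many $\Omega_{n}$: the family $\{K \cap \Omega_{n}\}_{n<\kappa}$ is a pairwise disjoint relatively open cover of $K$, so by compactness only finitely many are nonempty. Writing $F_{K} = \{n : K \cap \Omega_{n} \ne \emptyset\}$, each $K \cap \Omega_{n}$ is compact, and $\sup_{K}|f| = \max_{n \in F_{K}} \sup_{K \cap \Omega_{n}} |f\!\restriction_{\Omega_{n}}|$, which shows that a basic $\mathcal{A}(\Omega)$-neighborhood of $0$ is the $\Phi$-preimage of a basic product neighborhood. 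Alternatively, one may simply observe that $\Phi$ is a continuous bijective homomorphism between Polish groups (the product is Polish since $\kappa \le \aleph_{0}$) and invoke Theorem~\ref{theorem:becker-kechris} to conclude that $\Phi^{-1}$ is continuous as well.

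There is no serious obstacle here; the entire argument reduces to the patching property of analytic functions on a disjoint open cover and the fact that compactness forces finiteness of intersections with a pairwise disjoint open family. The only mild care needed is in the handling of the countable product topology, which is why writing the verification additively (or quoting Becker--Kechris) is cleaner than juggling arbitrary compact subsets of $\Omega$.
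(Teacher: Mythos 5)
Your proof is correct; the paper explicitly leaves this proposition unproved (classifying it among the ``well known or elementary'' facts left to the reader), and the restriction-map argument you give is the standard one. In particular, your key observation that any compact $K \subseteq \Omega$ meets only finitely many $\Omega_n$ correctly supplies what is needed to match the compact-open topology on $\mathcal{A}(\Omega)$ with the product topology, and the Becker--Kechris shortcut for $\Phi^{-1}$ is also a valid alternative.
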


Ideals in $\mathcal{A}(\Omega)$ will play a key role in what follows, especially the following distinguished
class of ideals. For $\alpha \in \Omega$ let
\begin{displaymath}
M_{\alpha} = \{ f \in \mathcal{A}(\Omega) \ | \ f(\alpha) = 0 \}
\end{displaymath}
be the associated ideal.

Note that $f \in \mathcal{A}(\Omega)$ is invertible if and only if $f$ is zero-free. Let $\mathcal{I} = \{ f
\in \mathcal{A}(\Omega) \ | \  f \text{ is zero-free} \}$, an abbreviation of
$\mathcal{I}_{\mathcal{A}(\Omega)}$.

Recall that if $f \in \mathcal{A}(\Omega)$, $\alpha \in \Omega$ and $f(\alpha) = 0$, then there exists a
unique $g \in \mathcal{A}(\Omega)$ so that $f = (z - \alpha)g$. More generally, for any $f \in
\mathcal{A}(\Omega)$ and $\alpha \in \Omega$, there exists a unique $g \in \mathcal{A}(\Omega)$ so that $f =
(z - \alpha)g + f(\alpha)$.

\begin{lemma} \label{lemma:characterization-of-ideals}
$I \subseteq \mathcal{A}(\Omega)$ is a proper principal maximal ideal if and only if there exists $\alpha
\in \Omega$ so that $I = M_{\alpha}$.
\end{lemma}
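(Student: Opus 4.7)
The plan is to prove both implications directly from the factorization statement recalled immediately before the lemma (for $f \in \mathcal{A}(\Omega)$ with $f(\alpha) = 0$, there is a unique $g \in \mathcal{A}(\Omega)$ with $f = (z-\alpha)g$) together with the observation already made in the text that $f \in \mathcal{A}(\Omega)$ is a unit if and only if it is zero-free.

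For the ``if'' direction, fix $\alpha \in \Omega$ and check three things about $M_{\alpha}$. First, $M_{\alpha}$ is proper, since the constant function $c_{1}$ satisfies $c_{1}(\alpha) = 1 \neq 0$. Second, $M_{\alpha}$ is principal: applying the factorization recalled in the text, every $f \in M_{\alpha}$ is of the form $(z-\alpha)g$ for some $g \in \mathcal{A}(\Omega)$, so $M_{\alpha} = (z-\alpha)\mathcal{A}(\Omega)$. Third, $M_{\alpha}$ is maximal: evaluation $f \mapsto f(\alpha)$ is a surjective ring homomorphism $\mathcal{A}(\Omega) \to \complexes$ (surjective because constant functions are analytic) whose kernel is exactly $M_{\alpha}$, so $\mathcal{A}(\Omega)/M_{\alpha} \cong \complexes$ is a field.

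For the ``only if'' direction, suppose $I = f\,\mathcal{A}(\Omega)$ is a proper principal maximal ideal. Because $I$ is proper, $f$ is not a unit, so by the characterization of units in $\mathcal{A}(\Omega)$ the function $f$ must have at least one zero $\alpha \in \Omega$. The factorization $f = (z-\alpha)g$ then puts $f$ into $M_{\alpha}$, so $I = (f) \subseteq M_{\alpha}$. Since $M_{\alpha}$ is a proper ideal by the first paragraph and $I$ is maximal, this forces $I = M_{\alpha}$.

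I do not anticipate a real obstacle here; the argument is essentially a formal consequence of the factorization recalled before the lemma, together with the standard computation of $\mathcal{A}(\Omega)/M_{\alpha}$. The only mild subtlety worth noting is that the argument works uniformly regardless of whether $\Omega$ is connected: in the disconnected case $f$ can vanish on entire components, but this still yields a point $\alpha \in \Omega$ where $f(\alpha) = 0$, which is all that is needed to conclude $I \subseteq M_{\alpha}$.
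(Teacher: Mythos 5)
Your proof is correct and follows essentially the same route as the paper's: show $M_{\alpha}$ is proper, generated by $z-\alpha$, and maximal via the evaluation homomorphism, and conversely use that a non-unit generator of a proper principal ideal has a zero $\alpha$, factor out $(z-\alpha)$ to get $I \subseteq M_{\alpha}$, and invoke maximality. The note about connectedness being irrelevant is a harmless addition.
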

\begin{proof}
$M_{\alpha}$ is a proper principal ideal since it is generated by $z - \alpha$ and is a maximal ideal since
$\mathcal{A}(\Omega)/M_{\alpha}$ is $\complexes$.

Conversely, let $f$ be the generator for an ideal $I$ in $\mathcal{A}(\Omega)$. If $f$ is zero-free, then $f$
is invertible in $\mathcal{A}(\Omega)$, so the constant $1$ function is in $I$ and $I = \mathcal{A}(\Omega)$
contradicting the fact that $I$ is proper. Hence, there is $\alpha \in \Omega$ for which $f(\alpha) = 0$ and
there is $g \in \mathcal{A}(\Omega)$ such that $f = (z - \alpha)g$.  So
\begin{displaymath}
I = f \mathcal{A}(\Omega) = (z - \alpha) g \mathcal{A}(\Omega) \subseteq (z - \alpha) \mathcal{A}(\Omega) = M_{\alpha}.
\end{displaymath}
$I = M_{\alpha}$ since $I$ was assumed to be maximal.
\end{proof}

\begin{lemma} \label{lem:scalar_characterisation}
Let $\Lambda$ be a countable dense subset of $\complexes$ and $\Omega$ be open and connected. Then, for $f \in
\mathcal{A}(\Omega)$, $f$ is constant if and only
\begin{displaymath}
f \in \bigcap_{\lambda \in \Lambda} (\lambda + (\mathcal{I} \cup \{0\})).
\end{displaymath}
\end{lemma}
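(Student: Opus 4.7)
The plan is to prove both directions directly, with the forward direction being a trivial unpacking of definitions and the reverse direction relying on the open mapping theorem for analytic functions.

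For the forward direction, suppose $f = c_{c}$ is constant with value $c \in \complexes$. Fix any $\lambda \in \Lambda$. Then $f - \lambda$ is the constant function with value $c - \lambda$. If $c = \lambda$, then $f - \lambda = 0 \in \{0\}$, and if $c \neq \lambda$, then $f - \lambda$ is a nonzero constant, which is zero-free, hence lies in $\mathcal{I}$. In either case, $f \in \lambda + (\mathcal{I} \cup \{0\})$, and since this holds for every $\lambda \in \Lambda$, we have $f$ in the desired intersection.

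For the reverse direction I will prove the contrapositive: if $f$ is nonconstant, I will produce some $\lambda \in \Lambda$ with $f \notin \lambda + (\mathcal{I} \cup \{0\})$, i.e., $f - \lambda$ is neither zero-free nor identically zero. Since $\Omega$ is open and connected and $f$ is nonconstant and analytic, the open mapping theorem implies that $f[\Omega]$ is a nonempty open subset of $\complexes$. Because $\Lambda$ is dense, we can choose $\lambda \in \Lambda \cap f[\Omega]$. Then $\lambda = f(\alpha)$ for some $\alpha \in \Omega$, so $(f - \lambda)(\alpha) = 0$, showing $f - \lambda$ is not zero-free, so $f - \lambda \notin \mathcal{I}$. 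On the other hand, $f - \lambda$ is not identically zero, because otherwise $f$ would be the constant function $\lambda$, contradicting our assumption. Hence $f - \lambda \notin \mathcal{I} \cup \{0\}$, completing the argument.

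The only substantive ingredient here is the open mapping theorem (which requires connectedness of $\Omega$ to ensure a nonconstant $f$ is open), so there is no real obstacle; the rest is a routine verification. The role of the hypothesis that $\Lambda$ is a \emph{countable} dense set is not used in the proof itself but is important downstream, since this lemma will presumably be applied together with countability-type Borel arguments (such as the intersection of countably many analytic sets being analytic), for which one needs $\Lambda$ to be countable.
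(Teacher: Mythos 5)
Your proof is correct and matches the paper's argument essentially verbatim: the forward direction is a trivial case split on whether the constant value equals $\lambda$, and the reverse direction is a contrapositive via the Open Mapping Theorem to find $\lambda \in \Lambda$ in the range of $f$. The concluding remark that countability of $\Lambda$ is used downstream (not in this lemma) is an accurate reading of how Lemma~\ref{lem:scalar_characterisation} is later deployed in Lemma~\ref{lemma:rings-some-Borel-sets}.
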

\begin{proof}
The implication is trivial if $f$ is constant.

Conversely, suppose $f \in \mathcal{A}(\Omega)$ is non-constant. Then, by the Open Mapping Theorem, there is
some $\alpha \in \Omega$ and some $\lambda \in \Lambda$ so that $f(\alpha) = \lambda$. It follows that $f -
\lambda$ has a zero which means $f - \lambda$ is not invertible and, since $f$ is non-constant, $f - \lambda$
is not identically zero. That is, $f - \lambda \not\in \mathcal{I} \cup \{0\}$.
\end{proof}

\begin{lemma} \label{lemma:rings-some-Borel-sets}
Let $\Omega \subseteq \complexes$ be open and connected, let $R$ be a Polish ring and let $\varphi : R \to
\mathcal{A}(\Omega)$ be an abstract isomorphism of rings.  Then $\varphi^{-1}[\complexes]$,
$\varphi^{-1}[\complexes \setminus \Omega]$ and $\varphi^{-1}[\Omega]$ are Borel sets.
\end{lemma}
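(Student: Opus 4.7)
The plan is to give algebraic descriptions of the three sets inside $\mathcal{A}(\Omega)$ that are visibly Borel (built from the invertibles together with translation by fixed named elements), and then pull these descriptions back through the ring isomorphism $\varphi$.

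First, since $\mathcal{A}(\Omega)$ is a commutative Polish ring with unity, Proposition \ref{prop:InvertiblesAreBorel} gives that the set of invertibles $\mathcal{I} := \mathcal{I}_{\mathcal{A}(\Omega)}$ is Borel. The ring $R$ is commutative as well (being abstractly ring-isomorphic to $\mathcal{A}(\Omega)$), so the same proposition shows that $\mathcal{I}_R$ is Borel in $R$, and since $\varphi$ is a ring isomorphism we have $\varphi^{-1}[\mathcal{I}] = \mathcal{I}_R$.

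For $\varphi^{-1}[\complexes]$, I would fix a countable dense $\Lambda \subseteq \complexes$ and invoke Lemma \ref{lem:scalar_characterisation} to write
\begin{displaymath}
\complexes = \bigcap_{\lambda \in \Lambda} \bigl(\lambda + (\mathcal{I} \cup \{0\})\bigr)
\end{displaymath}
inside $\mathcal{A}(\Omega)$. Pulling back then gives $\varphi^{-1}[\complexes] = \bigcap_{\lambda \in \Lambda} \bigl(\varphi^{-1}(\lambda) + (\mathcal{I}_R \cup \{0\})\bigr)$, a countable intersection of translates of the Borel set $\mathcal{I}_R \cup \{0\}$ by the fixed elements $\varphi^{-1}(\lambda) \in R$, hence Borel.

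For $\varphi^{-1}[\Omega]$ and $\varphi^{-1}[\complexes \setminus \Omega]$, the key observation is that for any $\alpha \in \complexes$, the function $z - \alpha \in \mathcal{A}(\Omega)$ is zero-free on $\Omega$ precisely when $\alpha \notin \Omega$, so $z - \alpha \in \mathcal{I}$ if and only if $\alpha \in \complexes \setminus \Omega$ (cf.\ Lemma \ref{lemma:characterization-of-ideals}). Setting $w := \varphi^{-1}(z) \in R$, I then obtain
\begin{displaymath}
\varphi^{-1}[\complexes \setminus \Omega] = \varphi^{-1}[\complexes] \cap (w - \mathcal{I}_R), \quad \varphi^{-1}[\Omega] = \varphi^{-1}[\complexes] \cap \bigl(w - (R \setminus \mathcal{I}_R)\bigr),
\end{displaymath}
each displayed as an intersection of Borel sets. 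The principal subtlety, and what might at first look like the main obstacle, is that $\Omega \subseteq \complexes$ cannot be characterized in purely abstract ring-theoretic language (different realizations of $\mathcal{A}(\Omega)$ produce different embeddings of $\Omega$ into the ring); the resolution is simply that one is permitted to name $w = \varphi^{-1}(z)$ as one specific element of $R$, after which the Borel descriptions above are immediate and no ``identification of $z$'' is needed.
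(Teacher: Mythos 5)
Your proposal is correct and follows essentially the same route as the paper: first identify $\mathcal{I}_R = \varphi^{-1}[\mathcal{I}]$ as Borel via Proposition \ref{prop:InvertiblesAreBorel}, then pull back the scalar characterization of Lemma \ref{lem:scalar_characterisation} to get $\varphi^{-1}[\complexes]$ Borel, and finally use the fixed element $\varphi^{-1}(z)$ together with $\mathcal{I}_R$ to carve out $\varphi^{-1}[\complexes \setminus \Omega]$ (the paper then writes $\varphi^{-1}[\Omega]$ as the set-difference $\varphi^{-1}[\complexes] \setminus \varphi^{-1}[\complexes \setminus \Omega]$, which is equivalent to your intersection with $w - (R \setminus \mathcal{I}_R)$).
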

\begin{proof}
$\mathcal{I}_{R} = \varphi^{-1}[\mathcal{I}]$ is a Borel set by Proposition \ref{prop:InvertiblesAreBorel}.
\begin{displaymath}
\varphi^{-1}[\complexes] = \bigcap_{\lambda \in \Lambda} (\varphi^{-1}(\lambda) + (\mathcal{I}_{R} \cup \{0\}))
\end{displaymath}
is a countable intersection of Borel sets and so is a Borel set.

Next,
\begin{displaymath}
\complexes \setminus \Omega = \complexes \cap \{ \alpha \in \complexes \ | \ z - \alpha \in \mathcal{I} \} = \complexes \cap (z + \mathcal{I})
\end{displaymath}
since $-\mathcal{I} = \mathcal{I}$.  Therefore
\begin{displaymath}
\varphi^{-1}[\complexes \setminus \Omega] = \varphi^{-1}[\complexes] \cap (\varphi^{-1}(z) + \mathcal{I}_{R})
\end{displaymath}
is a Borel set.

Finally, $\varphi^{-1}[\Omega] = \varphi[\complexes] \setminus \varphi[\complexes \setminus \Omega]$ is a Borel set.
\end{proof}

\begin{lemma} \label{lem:decomposition}
Let $\Omega \subseteq \complexes$ be open and connected, $R$ be a Polish ring and $\varphi : R \to
\mathcal{A}(\Omega)$ be an isomorphism of rings.
Then
\begin{enumerate}[label=(\roman*)]
\item \label{decomposition-closed-sets}
$\varphi^{-1}[\complexes]$ is closed in $R$ and if $M \subseteq R$ is a proper principal maximal ideal, then $M$ is closed;
\item \label{decomposition-the-isomorphism}
for a proper principal maximal ideal $M$,
\begin{displaymath}
\langle x,y \rangle \mapsto x + y, M \oplus \varphi^{-1}[\complexes] \to R,
\end{displaymath}
is an additive group and topological isomorphism.
\end{enumerate}
\end{lemma}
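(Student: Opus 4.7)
The plan is to exploit the classical additive decomposition $\mathcal{A}(\Omega) = \complexes \oplus M_\alpha$ valid for every $\alpha \in \Omega$, transport it through $\varphi$, and then apply Corollary \ref{cor:Atim} to the additive Polish group $(R,+)$ to extract closedness.

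For part \ref{decomposition-closed-sets}: by Lemma \ref{lemma:characterization-of-ideals} (applied to $\mathcal{A}(\Omega)$ and transported via $\varphi$) there is some $\alpha \in \Omega$ with $M = \varphi^{-1}[M_\alpha]$. Proposition \ref{proposition:M-analytic} makes $M$ analytic in $R$, and Lemma \ref{lemma:rings-some-Borel-sets} makes $\varphi^{-1}[\complexes]$ Borel, hence analytic. Both are additive subgroups of $R$. To apply Corollary \ref{cor:Atim} I need $R = M + \varphi^{-1}[\complexes]$ and $M \cap \varphi^{-1}[\complexes] = \{0\}$. The first is the transport of the decomposition $f = (f - f(\alpha)) + f(\alpha)$ (first summand in $M_\alpha$, second in $\complexes$) across $\varphi$; the second is the transport of $M_\alpha \cap \complexes = \{0\}$ in $\mathcal{A}(\Omega)$, which holds because a nonzero constant function is zero-free. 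Corollary \ref{cor:Atim}, applied to $(R,+)$, then yields that both $M$ and $\varphi^{-1}[\complexes]$ are closed.

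For part \ref{decomposition-the-isomorphism}: the map $\sigma : \langle x,y \rangle \mapsto x + y$ is visibly an additive group homomorphism from $M \oplus \varphi^{-1}[\complexes]$ to $R$, and the two facts verified above make it a bijection. It is continuous because addition in $R$ is continuous, and $M \oplus \varphi^{-1}[\complexes]$ is a Polish group because, by part \ref{decomposition-closed-sets}, both summands are closed in $R$. Since $\sigma$ is then a continuous surjective homomorphism between Polish groups, its image is all of $R$ and hence non-meager, so Theorem \ref{theorem:becker-kechris} makes $\sigma$ open; combined with bijectivity this gives that $\sigma$ is a homeomorphism.

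The only subtlety I anticipate is cosmetic: Corollary \ref{cor:Atim} is stated for a multiplicative Polish group, while it is applied here to the abelian additive group $(R,+)$. Nothing in the proof of that corollary uses the multiplicative labeling, so the translation is immediate, and no genuine obstacle remains.
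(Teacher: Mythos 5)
Your proposal is correct and follows the paper's own proof essentially verbatim: transport the decomposition $\mathcal{A}(\Omega) = M_\alpha \oplus \complexes$ through $\varphi$, invoke Proposition \ref{proposition:M-analytic} and Lemma \ref{lemma:rings-some-Borel-sets} to get analyticity/Borelness, apply Corollary \ref{cor:Atim}, and then finish part \ref{decomposition-the-isomorphism} via Theorem \ref{theorem:becker-kechris}. Your parenthetical remark about Corollary \ref{cor:Atim} being stated multiplicatively but applied additively is a fair observation, and the paper implicitly makes the same identification throughout.
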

\begin{proof}
\ref{decomposition-closed-sets}
Let $M \subseteq R$ be a proper principal maximal ideal. It follows that there exists $\alpha \in \Omega$ so
that $\varphi[M] = M_{\alpha}$ and $\mathcal{A}(\Omega) \cong M_{\alpha} \oplus \complexes$ as additive
groups. Hence, $R \cong M \oplus \varphi^{-1}[\complexes]$ as additive groups. As $M \cap
\varphi^{-1}[\complexes] = \{0\}$, $M$ is an analytic set by Proposition \ref{proposition:M-analytic}, and
$\varphi^{-1}[\complexes]$ is a Borel set, Corollary \ref{cor:Atim} implies that both
$\varphi^{-1}[\complexes]$ and $M$ are closed in $R$.

\ref{decomposition-the-isomorphism}
Since $\varphi^{-1}[\complexes]$ and $M$ are closed in $R$, they are also additive Polish groups so $M \oplus
\varphi^{-1}[\complexes]$ is an additive Polish group.
The mapping $\langle x,y \rangle \mapsto x + y$, $M \oplus \varphi^{-1}[\complexes] \to R$, is a homemorphism by
Theorem \ref{theorem:becker-kechris} since it is continuous and an additive group isomorphism.
\end{proof}

\begin{lemma} \label{lem:RingAbstractPointEvaluation}
Let $\Omega$ be open and connected and $\varphi : R \to \mathcal{A}(\Omega)$ be a ring isomorphism.
For each $\alpha \in \Omega$, the mapping
\begin{displaymath}
x \mapsto \varphi^{-1}(\varphi(x)(\alpha)), R \to \varphi^{-1}[\complexes],
\end{displaymath}
is continuous.
\end{lemma}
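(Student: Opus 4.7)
The plan is to identify the map $x \mapsto \varphi^{-1}(\varphi(x)(\alpha))$ as the projection of $R$ onto the summand $\varphi^{-1}[\complexes]$ in the topological direct sum decomposition provided by Lemma~\ref{lem:decomposition}. Once this is recognized, continuity is immediate.

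More precisely, fix $\alpha \in \Omega$ and set $M = \varphi^{-1}[M_{\alpha}]$, where $M_{\alpha} = \{ f \in \mathcal{A}(\Omega) \ | \ f(\alpha) = 0 \}$. By Lemma~\ref{lemma:characterization-of-ideals}, $M_{\alpha}$ is a proper principal maximal ideal, so $M$ is a proper principal maximal ideal in $R$. Applying Lemma~\ref{lem:decomposition}\ref{decomposition-the-isomorphism} with this choice of $M$, the map $\langle m,c \rangle \mapsto m+c$ is a homeomorphism from $M \oplus \varphi^{-1}[\complexes]$ onto $R$. I would then let $\pi : R \to \varphi^{-1}[\complexes]$ denote the second-coordinate projection under this identification; $\pi$ is continuous because the inverse of the homeomorphism is continuous and projection onto a factor of a product is continuous.

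It remains to verify that the given map coincides with $\pi$. Take $x \in R$ and write $x = m + c$ with $m \in M$ and $c \in \varphi^{-1}[\complexes]$. Then $\varphi(x) = \varphi(m) + \varphi(c)$ in $\mathcal{A}(\Omega)$. Since $\varphi(m) \in M_{\alpha}$ we have $\varphi(m)(\alpha) = 0$, and since $\varphi(c)$ lies in $\varphi[\varphi^{-1}[\complexes]] = \complexes$ (under the identification of $\lambda$ with $c_{\lambda}$), $\varphi(c)$ is the constant function with some value $\lambda \in \complexes$. Therefore $\varphi(x)(\alpha) = \lambda$, and so $\varphi^{-1}(\varphi(x)(\alpha)) = \varphi^{-1}(\lambda) = c = \pi(x)$, which gives the desired equality of maps.

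The only point requiring genuine work has already been done in Lemma~\ref{lem:decomposition}, namely the automatic continuity of the additive isomorphism $M \oplus \varphi^{-1}[\complexes] \to R$ via Theorem~\ref{theorem:becker-kechris}; the content of the present lemma is the observation that abstract point evaluation at $\alpha$, transported back to $R$, is exactly the projection associated with the maximal ideal $\varphi^{-1}[M_{\alpha}]$. I do not anticipate any obstacle beyond correctly setting up this identification.
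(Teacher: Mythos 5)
Your proposal is correct and follows the same route as the paper: invoke Lemma~\ref{lem:decomposition} with $M = \varphi^{-1}[M_{\alpha}]$, note that the induced projection onto $\varphi^{-1}[\complexes]$ is continuous, and then verify by evaluation at $\alpha$ that this projection coincides with $x \mapsto \varphi^{-1}(\varphi(x)(\alpha))$.
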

\begin{proof}
For $\alpha \in \Omega$, let $x \mapsto \langle x_{\alpha},x_{\alpha}^{\ast} \rangle$, $R \to \varphi^{-1}[M_{\alpha}]
\oplus \varphi^{-1}[\complexes]$, be the homeomorphic isomorphism of additive groups guaranteed by Lemma
\ref{lem:decomposition}. Since projection is continuous, we see that the mapping $x \mapsto
x_{\alpha}^{\ast}$, $R \to \varphi^{-1}[\complexes]$, is continuous.

Since $x = x_{\alpha} + x_{\alpha}^{\ast}$,
$\varphi(x) = \varphi(x_{\alpha}) + \varphi(x_{\alpha}^{\ast})$,
$\varphi(x)(\alpha) = \varphi(x_{\alpha})(\alpha) + \varphi(x_{\alpha}^{\ast})(\alpha) = 0  + \varphi(x_{\alpha}^{\ast})(\alpha)
= \varphi(x_{\alpha}^{\ast})(\alpha) = \varphi(x_{\alpha}^{\ast})$ as $\varphi(x_{\alpha}) \in M_{\alpha}$
and $\varphi(x_{\alpha}^{\ast}) \in \complexes$.  Therefore $x \mapsto x_{\alpha}^{\ast} = \varphi^{-1}(\varphi(x_{\alpha}^{\ast}))
= \varphi^{-1}(\varphi(x)(\alpha))$ is continuous.
\end{proof}

\begin{theorem} \label{thm:fundamental}
Let $\Omega$ be open and connected. If $R$ is a Polish ring and $\varphi : R \to \mathcal{A}(\Omega)$ is
an abstract ring isomorphism so that $\varphi\restriction_{\varphi^{-1}[\complexes]}$ is
$\mathscr{BP}$-measurable, then $\varphi$ is a homeomorphism.
\end{theorem}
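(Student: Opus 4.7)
The plan is to apply the Becker--Kechris theorem (Theorem \ref{theorem:becker-kechris}) twice: first to upgrade $\varphi\restriction_{\varphi^{-1}[\complexes]}$ from a $\mathscr{BP}$-measurable map to a topological group isomorphism onto $\complexes$, and then to $\varphi$ itself once $\varphi$ has been shown to be Borel on a generating family for the Borel structure of $\mathcal{A}(\Omega)$.

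First I would observe that $\varphi^{-1}[\complexes]$ is closed in $R$ by Lemma \ref{lem:decomposition}\ref{decomposition-closed-sets}, hence is a Polish additive subgroup, and that the restriction $\varphi\restriction_{\varphi^{-1}[\complexes]} : \varphi^{-1}[\complexes] \to \complexes$ is an additive group isomorphism. By hypothesis it is $\mathscr{BP}$-measurable, and its image $\complexes$ is certainly non-meager, so Theorem \ref{theorem:becker-kechris} gives that this restriction is both continuous and open, i.e.\ a topological group isomorphism. Combining this with Lemma \ref{lem:RingAbstractPointEvaluation}, for each $\alpha \in \Omega$ the evaluation map
\begin{displaymath}
\psi_{\alpha} : R \to \complexes, \qquad \psi_{\alpha}(x) = \varphi(x)(\alpha),
\end{displaymath}
is continuous, since it factors as the continuous map $x \mapsto \varphi^{-1}(\varphi(x)(\alpha))$ of Lemma \ref{lem:RingAbstractPointEvaluation} followed by $\varphi\restriction_{\varphi^{-1}[\complexes]}$.

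Next I would fix a countable dense subset $D \subseteq \Omega$ and consider the countable family
\begin{displaymath}
\mathscr{F} = \bigl\{ \{ f \in \mathcal{A}(\Omega) : |f(\alpha) - q| < r \} : \alpha \in D,\ q \in \rationals^{2},\ r \in \rationals,\ r > 0 \bigr\}
\end{displaymath}
of open subsets of $\mathcal{A}(\Omega)$. Because analytic functions are continuous, two distinct $f, g \in \mathcal{A}(\Omega)$ disagree at some point of $D$, and an elementary triangle-inequality estimate then produces a member of $\mathscr{F}$ containing $f$ but not $g$; so $\mathscr{F}$ separates points. Mackey's theorem (Theorem \ref{theorem:Mackey}) consequently implies that $\mathscr{F}$ generates the Borel structure of $\mathcal{A}(\Omega)$. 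The $\varphi$-preimage of a member of $\mathscr{F}$ is $\psi_{\alpha}^{-1}[\{z \in \complexes : |z-q|<r\}]$, which is open in $R$ by the continuity of $\psi_{\alpha}$. Thus $\varphi$ is Borel measurable and in particular $\mathscr{BP}$-measurable, and a second application of Theorem \ref{theorem:becker-kechris} to $\varphi$ as an additive group isomorphism with non-meager image concludes that $\varphi$ is a homeomorphism.

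The main obstacle is the transition from $\mathscr{BP}$-measurability on the subgroup $\varphi^{-1}[\complexes]$ to global $\mathscr{BP}$-measurability of $\varphi$ on all of $R$. The bridge is built from the two ingredients supplied by the excerpt: the additive decomposition of Lemma \ref{lem:decomposition}, which yields the continuous ``abstract point evaluation'' of Lemma \ref{lem:RingAbstractPointEvaluation}, and the first Becker--Kechris upgrade, which converts those abstract evaluations into honest continuous maps $\psi_{\alpha} : R \to \complexes$ whose preimages of open balls with rational data generate the Borel structure of $\mathcal{A}(\Omega)$ via Mackey's theorem.
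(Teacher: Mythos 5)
Your proof is correct and follows essentially the same strategy as the paper's: upgrade $\varphi\restriction_{\varphi^{-1}[\complexes]}$ via Becker--Kechris (Theorem \ref{theorem:becker-kechris}), combine with Lemma \ref{lem:RingAbstractPointEvaluation} to obtain continuous evaluation maps $x \mapsto \varphi(x)(\alpha)$ at a countable dense set of points, and then deduce that $\varphi$ is Borel before a final appeal to Becker--Kechris. The paper packages the last step by factoring $\varphi$ through a Borel isomorphism of $\mathcal{A}(\Omega)$ onto its image in $\prod_{\lambda\in\Lambda}\complexes$, whereas you invoke Mackey's theorem (Theorem \ref{theorem:Mackey}) on the separating family $\mathscr{F}$ of rational-data subbasic open balls; these are interchangeable devices and the substance of the argument is the same.
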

\begin{proof}
$\varphi^{-1}[\complexes]$ is a closed additive subgroup of $R$ by Lemma \ref{lem:decomposition} and therefore
is itself an additive Polish group.  $\varphi\restriction_{\varphi^{-1}[\complexes]}$ is an additive group and
topological isomorphism since it is $\mathscr{BP}$-measurable.  Therefore for each $\alpha \in \Omega$ the
mapping $x \mapsto x_{\alpha}^{\ast} \mapsto \varphi(x_{\alpha}^{\ast}) = \varphi(x)(\alpha)$ is continuous.
Let $\Lambda \subseteq \Omega$ be a countable dense set and let $\psi : \mathcal{A}(\Omega) \to
\prod_{\lambda \in \Lambda} \complexes$ be defined by $\psi(f) = \prod_{\lambda \in \Lambda} f(\lambda)$.
$\psi$ is a continuous injection, its range is a Borel set and $\psi$ is a Borel isomorphism onto its range.
Define $\Theta : R \to \mathcal{A}(\Omega)$ by $\Theta(x) = \psi^{-1}(\prod_{\lambda \in \Lambda}
\varphi(x)(\lambda))$. $\Theta$ is a Borel linear bijection and therefore a topological isomorphism between
$R$ and $\mathcal{A}(\Omega)$.
\end{proof}

\begin{corollary} \label{corollary:c-algebra-case}
Let $A$ be any Polish $\complexes$-algebra and $\varphi : A \to \mathcal{A}(\Omega)$ be an abstract isomorphism of
$\complexes$-algebras, where $\Omega$ is open and connected. Then $\varphi$ is a homeomorphism.
\end{corollary}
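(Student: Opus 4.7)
The plan is to reduce Corollary \ref{corollary:c-algebra-case} to Theorem \ref{thm:fundamental}, so the only task is to verify that $\varphi\restriction_{\varphi^{-1}[\complexes]}$ is $\mathscr{BP}$-measurable. In the $\complexes$-algebra setting this restriction is in fact a homeomorphism, and no further descriptive set theory beyond what is already invoked in Theorem \ref{thm:fundamental} is needed.

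First, I would identify $\varphi^{-1}[\complexes]$ explicitly. Since $\varphi$ is a unital $\complexes$-algebra isomorphism, $\varphi(1_A) = 1_{\mathcal{A}(\Omega)} = c_1$, and $\varphi(\lambda\cdot 1_A) = \lambda\,\varphi(1_A) = c_\lambda$ for every $\lambda\in\complexes$. Under the standard identification $c_\lambda \leftrightarrow \lambda$ set up in Section \ref{section-roafasc}, this shows $\varphi^{-1}[\complexes] = \{\lambda\cdot 1_A : \lambda\in\complexes\}$.

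Next, I would observe that the scalar-multiplication map $s : \complexes \to A$ defined by $s(\lambda) = \lambda\cdot 1_A$ is continuous because $A$ is a Polish $\complexes$-algebra. By Proposition \ref{proposition:complex-vector-spaces}, $s$ is a homeomorphism from $\complexes$ onto its range, which is closed in $A$ and, by the previous step, equals $\varphi^{-1}[\complexes]$. Tracing through the identifications, $s^{-1}$ is precisely $\varphi\restriction_{\varphi^{-1}[\complexes]}$, so this restriction is continuous and in particular $\mathscr{BP}$-measurable.

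With this hypothesis verified, Theorem \ref{thm:fundamental} applies directly and yields that $\varphi : A \to \mathcal{A}(\Omega)$ is a homeomorphism. There is no substantive obstacle; the content of the corollary is simply that the $\complexes$-algebra structure on $A$ automatically supplies the measurability hypothesis of Theorem \ref{thm:fundamental}, and the only care required is the routine bookkeeping between $\lambda$ and $c_\lambda$.
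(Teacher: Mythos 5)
Your proof is correct and follows essentially the same path as the paper's: both apply Proposition \ref{proposition:complex-vector-spaces} to see that $\lambda \mapsto \lambda \cdot 1_A = \lambda\,\varphi^{-1}(1)$ is a homeomorphism onto the closed subset $\varphi^{-1}[\complexes]$, note that $\varphi$ inverts this map since it is a $\complexes$-algebra isomorphism, and then invoke Theorem \ref{thm:fundamental}. The only difference is that you spell out the identifications a bit more explicitly.
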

\begin{proof}
By Proposition \ref{proposition:complex-vector-spaces}, we have that $\lambda \mapsto \lambda \varphi^{-1}(1)$,
$\complexes \to A$, is a homeomorphism onto its range. $\varphi(\lambda \varphi^{-1}(1)) = \lambda$ since
$\varphi$ is an isomorphism of $\complexes$-algebras.  Hence $\varphi \restriction_{\varphi^{-1}[\complexes]}$
is continuous so Theorem \ref{thm:fundamental} applies to ensure that $\varphi$ is a homeomorphism.
\end{proof}

Let $R$ be a Polish ring.  $R$ is said to be algebraically determined if, given a Polish ring $S$ and an
abstract ring isomorphism, $\varphi : S \to R$, then $\varphi$ is also a topological isomorphism.
$\complexes$ is not an algebraically determined Polish ring since it has many discontinuous automorphisms. On
the other hand $\reals$ is an algebraically determined Polish ring (\cite{kallman-1984a}), a not totally
trivial fact.  Similar definitions can obviously be made for algebraically determined Polish groups, Polish
algebras, Polish Lie rings, or Polish Lie algebras.  The previous corollary states that $\mathcal{A}(\Omega)$
is an algebraically determined $\complexes$-algebra.

\begin{theorem} \label{theorem:rings-exterior-implies-algebraically-determined}
Suppose $\Omega$ is open, connected and $\interior(\Omega^{c}) \ne \emptyset$. Then $\mathcal{A}(\Omega)$ is
an algebraically determined Polish ring. In particular, this means that $\mathcal{A}(\mathbb{D})$ is algebraically
determined as a Polish ring.
\end{theorem}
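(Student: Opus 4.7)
The plan is to reduce the problem to Theorem \ref{thm:fundamental} by showing that for any abstract ring isomorphism $\varphi : R \to \mathcal{A}(\Omega)$ with $R$ Polish, the restriction of $\varphi$ to $\varphi^{-1}[\complexes]$ is $\mathscr{BP}$-measurable. Once this is established, Theorem \ref{thm:fundamental} delivers that $\varphi$ is a homeomorphism. To this end, let $S := \varphi^{-1}[\complexes]$. By Lemma \ref{lem:decomposition}(i), $S$ is closed in $R$; since it is clearly a subring, it inherits the structure of a Polish ring. The restriction $\varphi_{0} := \varphi\restriction_{S}$ is then an abstract ring isomorphism from the Polish ring $S$ onto the Polish ring $\complexes$.

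Next, I would invoke Proposition \ref{prop:ContinuityByExterior} with $\Omega$ itself playing the role of the distinguished Borel subset of $\complexes$. The hypotheses match up exactly: $\Omega$ is a Borel set with $\interior(\Omega) \neq \emptyset$ (it is open and nonempty), and $\interior(\Omega^{c}) \neq \emptyset$ is precisely the extra assumption of the theorem. By Lemma \ref{lemma:rings-some-Borel-sets}, $\varphi^{-1}[\Omega]$ is a Borel subset of $R$; since it is contained in the closed subset $S$, it is also Borel as a subset of $S$ and so has the Baire property in $S$. Proposition \ref{prop:ContinuityByExterior} then implies that $\varphi_{0} : S \to \complexes$ is a topological isomorphism. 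In particular $\varphi_{0}$ is continuous, so $\varphi\restriction_{\varphi^{-1}[\complexes]}$ is $\mathscr{BP}$-measurable, and Theorem \ref{thm:fundamental} finishes the argument. For the specific case of $\mathcal{A}(\mathbb{D})$, the hypotheses are clearly met: $\mathbb{D}$ is open and connected, and $\interior(\mathbb{D}^{c}) = \{\zeta \in \complexes \ | \ |\zeta| > 1\} \neq \emptyset$.

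The only potential obstacle is conceptual rather than technical: one must recognize that, although $\varphi$ itself need not be $\mathscr{BP}$-measurable a priori, the interior-complement hypothesis on $\Omega$ is tailor-made to force continuity of the restriction $\varphi_{0}$ to the Polish subring $S$, where Proposition \ref{prop:ContinuityByExterior} can be applied with $\Omega$ itself as the separating Borel set. Once the scalar piece $\varphi_{0}$ is continuous, Theorem \ref{thm:fundamental} handles the passage from continuity on $S$ to continuity on all of $R$ via the point-evaluation maps, so no further work is required.
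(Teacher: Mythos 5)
Your proof is correct and follows essentially the same route as the paper: both use Lemma \ref{lem:decomposition} to get that $\varphi^{-1}[\complexes]$ is a closed Polish subring, Lemma \ref{lemma:rings-some-Borel-sets} to get that $\varphi^{-1}[\Omega]$ is Borel, Proposition \ref{prop:ContinuityByExterior} (applied with $\Omega$ itself as the separating Borel set) to force continuity of the restriction to the scalars, and then Theorem \ref{thm:fundamental} to conclude. The only difference is that you spell out the verification of hypotheses a bit more explicitly.
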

\begin{proof}
Let $R$ be a Polish ring and let $\varphi : R \to \mathcal{A}(\Omega)$ be an algebraic isomorphism.
$\varphi^{-1}[\complexes]$ is a closed and therefore Polish subring of $R$ by Lemma \ref{lem:decomposition}.
$\varphi^{-1}[\Omega]$ is a Borel set by Lemma \ref{lemma:rings-some-Borel-sets}. Proposition
\ref{prop:ContinuityByExterior} now guarantees that $\varphi\restriction_{\varphi^{-1}[\complexes]}$ is a
Borel mapping. Conclude the proof by applying Theorem \ref{thm:fundamental}.
\end{proof}

\begin{proposition} \label{proposition:prod-ad}
Let $\kappa \leq \aleph_{0}$, suppose $R_{n}$ is an algebraically determined Polish ring with unity for each $n < \kappa$,
and let $R = \prod_{n < \kappa} R_{n}$.  Then $R$ is an algebraically determined Polish ring.
\end{proposition}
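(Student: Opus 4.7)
For each $n < \kappa$, let $e_n \in R$ be the idempotent that is $1_{R_n}$ in coordinate $n$ and $0$ elsewhere, and set $f_n = \varphi^{-1}(e_n) \in S$. The plan is to reduce continuity of $\varphi$ to continuity of its restriction to each ``coordinate piece'' $f_n S$, where one can invoke algebraic determination of $R_n$.

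First I would verify the routine algebraic identities: $f_n$ is idempotent, $f_n(1_S - f_n) = 0$, $S = f_n S + (1_S - f_n) S$ (since $x = f_n x + (1_S - f_n) x$ for every $x$), and $f_n S \cap (1_S - f_n) S = \{0\}$ (if $f_n a = (1_S - f_n) b$, multiply on the left by $f_n$ to see both sides equal $0$). Each of $f_n S$ and $(1_S - f_n) S$ is an analytic additive subgroup of $S$ by Proposition \ref{proposition:M-analytic}, so applying Corollary \ref{cor:Atim} to the additive Polish group structure of $S$ forces both to be closed in $S$.

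Next I would invoke algebraic determination. The closed subring $f_n S$ with unit $f_n$ is a Polish ring in its subspace topology, and $\varphi$ restricts to a ring isomorphism $f_n S \to e_n R$. The $n$th coordinate projection is simultaneously a topological and algebraic isomorphism of $e_n R$ onto $R_n$, so composing produces a ring isomorphism $f_n S \to R_n$; algebraic determination of $R_n$ upgrades this to a topological isomorphism. To assemble the pieces, note that multiplication by $f_n$ is continuous on $S$ and $\varphi(f_n x) = e_n \varphi(x)$, so $\pi_n(\varphi(x)) = \pi_n(\varphi(f_n x))$ exhibits $\pi_n \circ \varphi$ as the continuous composition $x \mapsto f_n x \mapsto \varphi(f_n x) \mapsto \pi_n(\varphi(f_n x))$. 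Hence every coordinate of $\varphi$ is continuous and $\varphi : S \to \prod_{n < \kappa} R_n$ is continuous. As $\varphi$ is a continuous bijective homomorphism between additive Polish groups, Theorem \ref{theorem:becker-kechris} makes $\varphi$ a homeomorphism.

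The main obstacle is the closedness of $f_n S$ and $(1_S - f_n)S$: without it we have no Polish ring structure on $f_n S$ and cannot bring the algebraic determination of $R_n$ to bear. Corollary \ref{cor:Atim} is precisely designed to bridge this gap, turning the algebraic direct-sum decomposition into a topological one; after that step the remainder is largely bookkeeping, equally valid whether $\kappa$ is finite or $\aleph_0$.
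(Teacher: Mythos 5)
Your proof is correct and follows essentially the same route as the paper: use the central idempotents $e_n$ to carve $S$ into the analytic subgroups $f_nS$ and $(1_S-f_n)S$, invoke Corollary \ref{cor:Atim} on the additive Polish group structure to close them, and then cite algebraic determination of each $R_n$ to make $\varphi\restriction_{f_nS}$ a homeomorphism. The only cosmetic difference is in the final assembly: the paper pushes through a topological isomorphism $S \cong \prod_{n<\kappa} S_n$ and then composes with $\prod_n \varphi_n$, while you show directly that each coordinate map $\pi_n \circ \varphi$ is continuous and then apply Theorem \ref{theorem:becker-kechris} to upgrade the continuous bijection to a homeomorphism; your packaging is arguably tidier, but the two arguments are the same in substance.
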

\begin{proof}
Let $S$ be a Polish ring and let $\varphi : S \to R$ be an algebraic isomorphism.
For each $n$ let $e_{n} \in R$ be defined by $e_{n}(\ell) = 0_{R_{n}}$ if $\ell \ne n$, $e_{n}(n) = 1_{R_{n}}$ and let $S_{n} = \varphi^{-1}(e_{n})S = S\varphi^{-1}(e_{n})$.
Each $S_{n}$ is an analytic set and subring of $S$.
If $f_{n} = \prod_{\ell < \kappa} 1_{R_{\ell}} - e_{n}$, let $T_{n} = \varphi^{-1}(f_{n})S = S\varphi^{-1}(f_{n})$.
Then $T_{n}$ is an analytic set and subring of $S$, $S = S_{n}T_{n} = T_{n}S_{n}$ and $S_{n} \cap T_{n} = \prod_{n < \kappa} \{0\}$.
Hence, Corollary \ref{cor:Atim} implies that each $S_{n}$ is a closed subring of $S$ and therefore itself is a Polish ring.
Let $\varphi_{n} = \varphi \restriction_{S_{n}}$.
Then $\varphi_{n} : S_{n} \to R_{n}$ is an algebraic isomorphism and therefore is a topological isomorphism since $R_{n}$ is algebraically determined.  Therefore $\prod_{n < \kappa} \varphi_{n} : \prod_{n < \kappa} S_{n} \to \prod_{n < \kappa} R_{n}$ is a topological isomorphism.

The proof will therefore be complete if we prove that $S$ and $\prod_{n < \kappa} S_{n}$ are topologically isomorphic.
Next, note that \(S\) and \(S_{n} \times T_{n}\) are homeomorphic since the natural mapping $S_{n} \times T_{n} \to S$ is a continuous bijection of additive Polish groups.
Thus there is a natural continuous ring bijection $p : S \to \prod_{n < \kappa} S_{n}$ which therefore is a topological isomorphism.
\end{proof}

\begin{corollary} \label{corollary:rings-disconnected-domain}
Suppose $\Omega$ is open and disconnected.  Then $\mathcal{A}(\Omega)$ is an algebraically determined Polish ring.
\end{corollary}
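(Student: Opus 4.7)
The plan is to combine the results already proved in the section. The hypothesis that $\Omega$ is open and disconnected means we can write $\Omega = \bigsqcup_{n < \kappa} \Omega_n$ where the $\Omega_n$ are its connected components and $2 \leq \kappa \leq \aleph_0$. By Proposition \ref{proposition:rings-disconnected-open-sets}, $\mathcal{A}(\Omega)$ is topologically ring isomorphic to $\prod_{n < \kappa} \mathcal{A}(\Omega_n)$, so in view of Proposition \ref{proposition:prod-ad} it suffices to show that each factor $\mathcal{A}(\Omega_n)$ is an algebraically determined Polish ring.

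For this I would appeal to Theorem \ref{theorem:rings-exterior-implies-algebraically-determined}. Each $\Omega_n$ is open and connected by construction, so the only remaining hypothesis to verify is that $\interior(\Omega_n^{c}) \neq \emptyset$. This is immediate from disconnectedness: because $\kappa \geq 2$, there is some other component $\Omega_m$ with $m \neq n$, and $\Omega_m$ is open, nonempty, and disjoint from $\Omega_n$, whence $\emptyset \neq \Omega_m \subseteq \interior(\Omega_n^{c})$. Theorem \ref{theorem:rings-exterior-implies-algebraically-determined} then yields that every $\mathcal{A}(\Omega_n)$ is algebraically determined, and Proposition \ref{proposition:prod-ad} closes the argument.

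There is really no substantial obstacle here; the only subtlety worth noting is the one just described, namely that disconnectedness of $\Omega$ automatically supplies the interior-of-complement hypothesis needed to invoke Theorem \ref{theorem:rings-exterior-implies-algebraically-determined} on each component. After that, the proof is a two-line composition of the general product result with the connected-component decomposition.
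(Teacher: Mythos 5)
Your proposal is correct and follows exactly the paper's argument: decompose into connected components, apply Propositions \ref{proposition:rings-disconnected-open-sets} and \ref{proposition:prod-ad} together with Theorem \ref{theorem:rings-exterior-implies-algebraically-determined}. The only difference is that you explicitly justify why $\interior(\Omega_n^c) \neq \emptyset$ via the presence of another component, which the paper states without comment.
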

\begin{proof}
Choose $\kappa \leq \aleph_{0}$ so that $\Omega = \bigcup\{ \Omega_{n} \ | \ n < \kappa \}$ where the
$\Omega_{n}$'s are open, connected for $n < \kappa$ and pairwise disjoint. Therefore Proposition
\ref{proposition:rings-disconnected-open-sets} implies that $\mathcal{A}(\Omega)$ and $\prod_{n < \kappa}
\mathcal{A}(\Omega_{n})$ are algebraically and topologically isomorphic. Since $\interior(\Omega_{n}^{c}) \ne
\emptyset$ for each $n$, Theorem \ref{theorem:rings-exterior-implies-algebraically-determined} implies that
each $\mathcal{A}(\Omega_{n})$ is algebraically determined for each $n < \kappa$. Now apply Proposition
\ref{proposition:prod-ad} to conclude the proof.
\end{proof}

The results of this section, in particular Corollary \ref{corollary:c-algebra-case}, Theorem
\ref{theorem:rings-exterior-implies-algebraically-determined}, and Corollary
\ref{corollary:rings-disconnected-domain} may be considered as test cases and exercises in the techniques
developed in this section and are preliminary to the general result in the next section.


\section{The General Case for the Ring of Analytic Functions}

The purpose of this section is to prove that if $\Omega \subseteq \complexes$ is open, then
$\mathcal{A}(\Omega)$ is an algebraically determined Polish ring.  First recall some basic, easily proved
facts about conformal mappings.

$z$, as usual denotes the identity mapping.  Recall that a conformal map between two open subsets $\Omega_{1}$, $\Omega_{2}$ of $\complexes$
is a bijection $\gamma : \Omega_{1} \to \Omega_{2}$ so that both $\gamma$ and $\gamma^{-1}$ are analytic.
A bijective map $\gamma : \Omega_{1} \to \Omega_{2}$ is said to be anti-conformal if $\overline{\gamma} :
\Omega_{1} \to \overline{z}[\Omega_{2}]$ is a conformal mapping. Here, of course, the overline denotes
complex conjugation.  A simple example of a conformal mapping is $f(\zeta) = \alpha \zeta + \beta$, where
$\alpha$, $\beta \in \complexes$ and $\alpha \ne 0$.

\begin{lemma} \label{lem:AntiConformalBijectionBetweenFunctions}
Let $\Omega_{1}$ be connected open and define $\Omega_{2} = \overline{z}[\Omega_{1}]$. Then the map $\varphi :
\mathcal{A}(\Omega_{1}) \to \mathcal{A}(\Omega_{2})$ defined by
\begin{displaymath}
\varphi(f) = \overline{z} \circ f \circ \left(\overline{z}\restriction_{\Omega_{2}} \right)
\end{displaymath}
is a topological isomorphism of Polish rings along with the property that $\varphi(f)' = \varphi(f')$ for each $f \in
\mathcal{A}(\Omega_{1})$.
\end{lemma}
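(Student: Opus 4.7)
The plan is to unpack the composition and verify each required property; everything will reduce to routine calculations once the definition is unfolded. For $w \in \Omega_{2}$ the formula reads $\varphi(f)(w) = \overline{f(\overline{w})}$, and I would use this form throughout.

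The only substantive step is to show $\varphi(f) \in \mathcal{A}(\Omega_{2})$. I would argue locally: around $z_{0} \in \Omega_{1}$, expand $f(z) = \sum_{n \ge 0} a_{n}(z - z_{0})^{n}$ on a disk $B(z_{0}, r) \subseteq \Omega_{1}$; then on the conjugate disk $B(\overline{z_{0}}, r) \subseteq \Omega_{2}$ the series $\sum_{n \ge 0} \overline{a_{n}}(w - \overline{z_{0}})^{n}$ converges to $\overline{f(\overline{w})} = \varphi(f)(w)$. Every point of $\Omega_{2}$ therefore has a neighborhood on which $\varphi(f)$ is represented by a convergent power series, so $\varphi(f)$ is analytic. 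This is the classical Schwarz-reflection-style calculation and is the only point using anything beyond formalism.

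For the ring properties: pointwise, since $z \mapsto \overline{z}$ is a ring automorphism of $\complexes$, one has $\varphi(f + g) = \varphi(f) + \varphi(g)$, $\varphi(fg) = \varphi(f)\varphi(g)$, and $\varphi(1) = 1$. For bijectivity, note that $\Omega_{1} = \overline{z}[\Omega_{2}]$, so the symmetric map $\psi : \mathcal{A}(\Omega_{2}) \to \mathcal{A}(\Omega_{1})$ defined by $\psi(g)(z) = \overline{g(\overline{z})}$ satisfies $\psi \circ \varphi = \mathrm{id}$ and $\varphi \circ \psi = \mathrm{id}$ by the involutivity of complex conjugation.

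For the topological statement, I would observe that $K \mapsto \overline{K}$ is a bijection between the compact subsets of $\Omega_{2}$ and those of $\Omega_{1}$, and that for any compact $K \subseteq \Omega_{2}$,
\begin{displaymath}
\sup_{w \in K} \bigl|\varphi(f)(w) - \varphi(g)(w)\bigr| = \sup_{z \in \overline{K}} \bigl|f(z) - g(z)\bigr|.
\end{displaymath}
Thus $\varphi$ carries each compact-open seminorm on one side isometrically onto the corresponding one on the other side, so $\varphi$ is continuous; by symmetry so is $\psi$, and $\varphi$ is a topological isomorphism of Polish rings. The derivative identity $\varphi(f)' = \varphi(f')$ then falls out of the same local calculation: term-by-term differentiation of $\sum_{n \ge 0} \overline{a_{n}}(w - \overline{z_{0}})^{n}$ gives $\sum_{n \ge 1} n \overline{a_{n}}(w - \overline{z_{0}})^{n-1}$, which is exactly the local power-series expansion of $\varphi(f')$ at $\overline{z_{0}}$. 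Since this holds on a neighborhood of every point of $\Omega_{2}$, the two analytic functions agree on $\Omega_{2}$. I do not foresee any genuine obstacle in this lemma; every clause is either a direct pointwise check or an immediate consequence of the power-series representation under conjugation.
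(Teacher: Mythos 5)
Your proof is correct. The paper gives no proof of this lemma, presenting it among the ``basic, easily proved facts about conformal mappings,'' and your verification --- the local power-series conjugation $\sum a_n(z-z_0)^n \mapsto \sum \overline{a_n}(w-\overline{z_0})^n$ for analyticity and the derivative identity, the involution $\psi$ on $\mathcal{A}(\Omega_2)$ for bijectivity, and the isometry of compact-open seminorms under $K \mapsto \overline{K}$ for the topological claim --- is exactly the routine argument the authors had in mind.
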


\begin{proposition}  \label{proposition:conformal-ad}
Suppose $\gamma : \Omega_{1} \to \Omega_{2}$ is a conformal bijection where $\Omega_{1}$, $\Omega_{2}$ are both open.
Then $\varphi : \mathcal{A}(\Omega_{1}) \to \mathcal{A}(\Omega_{2})$ defined by $\varphi(f) = f \circ \gamma^{-1}$ is a topological isomorphism of Polish rings.
More generally, $\mathcal{A}(\Omega_{1})$ and $\mathcal{A}(\Omega_{2})$ are topologically isomorphic as Polish rings if $\Omega_{1}$ and $\Omega_{2}$ are open subsets of $\complexes$ along with the property that they are conformally or anti-conformally equivalent.
\end{proposition}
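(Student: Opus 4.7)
The plan is to handle the two assertions separately: prove the conformal case by direct unpacking of the definitions, then reduce the anti-conformal case to the conformal case using Lemma~\ref{lem:AntiConformalBijectionBetweenFunctions}.

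For the conformal case, first verify that $\varphi(f) = f \circ \gamma^{-1}$ lands in $\mathcal{A}(\Omega_2)$: this holds because $\gamma^{-1}:\Omega_2 \to \Omega_1$ is analytic by hypothesis and compositions of analytic functions are analytic. The map $\varphi$ is a ring homomorphism because right-composition commutes with pointwise operations, i.e., $(f+g)\circ \gamma^{-1} = f\circ\gamma^{-1} + g\circ\gamma^{-1}$ and $(fg)\circ\gamma^{-1} = (f\circ\gamma^{-1})(g\circ\gamma^{-1})$, while $1 \circ \gamma^{-1} = 1$. The map $g \mapsto g \circ \gamma$ is a two-sided inverse, so $\varphi$ is an algebraic ring isomorphism.

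For continuity in the compact-open topology, fix a compact $K \subseteq \Omega_2$; since $\gamma^{-1}$ is continuous, $K' := \gamma^{-1}[K]$ is a compact subset of $\Omega_1$. For every $f \in \mathcal{A}(\Omega_1)$,
\begin{displaymath}
\sup_{z \in K} |\varphi(f)(z)| = \sup_{w \in K'} |f(w)|,
\end{displaymath}
so uniform convergence on $K'$ implies uniform convergence of the images on $K$. Hence $\varphi$ is continuous; interchanging the roles of $\Omega_1, \Omega_2$ and of $\gamma, \gamma^{-1}$ shows $\varphi^{-1}$ is continuous as well, so $\varphi$ is a topological ring isomorphism.

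For the more general statement, the conformal case is immediate from the above. For the anti-conformal case, suppose $\gamma : \Omega_1 \to \Omega_2$ is an anti-conformal bijection, so that $\overline{\gamma} : \Omega_1 \to \overline{z}[\Omega_2]$ is conformal. Applying the conformal case to $\overline{\gamma}$ yields a topological ring isomorphism $\mathcal{A}(\Omega_1) \to \mathcal{A}(\overline{z}[\Omega_2])$. Composing with the topological ring isomorphism $\mathcal{A}(\overline{z}[\Omega_2]) \to \mathcal{A}(\Omega_2)$ furnished by Lemma~\ref{lem:AntiConformalBijectionBetweenFunctions} (applied componentwise via Proposition~\ref{proposition:rings-disconnected-open-sets} if $\Omega_2$ is disconnected) gives the desired topological ring isomorphism. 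There is no substantive obstacle here; the proposition is essentially bookkeeping, and the only point demanding any care is tracking the conjugations in the anti-conformal reduction.
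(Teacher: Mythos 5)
Your proof is correct. The paper itself does not supply an argument for this proposition (it is introduced under the remark that these are ``basic, easily proved facts about conformal mappings'' and the proof is omitted), and the route you take --- direct verification that precomposition with $\gamma^{-1}$ is a ring isomorphism, continuity via $\sup_{K}|f\circ\gamma^{-1}| = \sup_{\gamma^{-1}[K]}|f|$ for compact $K$, then reduction of the anti-conformal case to the conformal one through Lemma~\ref{lem:AntiConformalBijectionBetweenFunctions} --- is exactly the standard argument the authors intend the reader to fill in.
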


From this we see that deciding whether or not $\mathcal{A}(\Omega)$ is algebraically determined for open and
connected $\Omega \subseteq \complexes$ reduces to deciding it for some conformal or anti-conformal
representative.

\begin{definition}
Let $R$ be a Polish ring and let $S \subseteq R$.  Let
\begin{displaymath}
ls(S) = \{ x \in R  \ | \ x^{n} \in S \text{ for some } n \ge 1 \text{ and } x^{m} \ne x^{n} \text { for all } 1 \le m < n \}
\end{displaymath}
and
\begin{displaymath}
li(S) = \{ x \in R \ | \ x^{n} \in S \text{ for all } n \ge 1 \text{ and } x^{m} \ne x^{n} \text { for all } 1 \le m < n \}.
\end{displaymath}
Comment: the notation is supposed to bring to mind limsup and liminf.
\end{definition}

\begin{lemma} \label{lemma:lsliBorel}
If $R$ is a Polish ring and $S \subseteq R$ is a Borel set (respectively, an analytic set), then
$ls(S)$ and $li(S)$ are both Borel sets (respectively, analytic sets).
\end{lemma}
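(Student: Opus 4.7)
The plan is to express both $ls(S)$ and $li(S)$ as explicit countable Boolean combinations of continuous preimages of $S$ together with an ``all powers distinct'' condition, and then invoke standard closure properties of the Borel and analytic $\sigma$-ideals.

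First, I would observe that for each integer $n \geq 1$ the power map $p_{n} : R \to R$ given by $p_{n}(x) = x^{n}$ is continuous, because multiplication in the Polish ring $R$ is continuous by hypothesis. Next, for each pair $1 \le m < n$, set $D_{m,n} = \{ x \in R \ | \ x^{m} \ne x^{n} \}$. The complement of $D_{m,n}$ is the preimage of the diagonal of $R \times R$ under the continuous map $x \mapsto \langle x^{m}, x^{n} \rangle$, so $D_{m,n}$ is open, hence Borel. Define $P_{n} = \bigcap_{m=1}^{n-1} D_{m,n}$ (with $P_{1} = R$), which is an open set.

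Then I would write
\begin{displaymath}
ls(S) = \bigcup_{n=1}^{\infty} \bigl( p_{n}^{-1}[S] \cap P_{n} \bigr)
\qquad \text{and} \qquad
li(S) = \Biggl( \bigcap_{n=1}^{\infty} p_{n}^{-1}[S] \Biggr) \cap \Biggl( \bigcap_{1 \le m < n} D_{m,n} \Biggr).
\end{displaymath}
For the Borel case, if $S$ is Borel then each $p_{n}^{-1}[S]$ is Borel by continuity of $p_{n}$, and the displayed formulas exhibit $ls(S)$ and $li(S)$ as a countable union, respectively a countable intersection, of Borel sets. For the analytic case, the preimage of an analytic set under a continuous (in fact, Borel) map is analytic, and the class of analytic subsets of a Polish space is closed under countable unions and countable intersections; intersecting with a Borel set preserves analyticity as well. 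Each of the right-hand sides is therefore analytic when $S$ is.

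There is no real obstacle here; the only point that requires a moment's thought is recognizing that the condition ``$x^{m} \ne x^{n}$ for all $1 \le m < n$'' inside the definition of $ls(S)$ is a per-$n$ open restriction $P_{n}$, while in the definition of $li(S)$ it is the single global $G_{\delta}$ condition $\bigcap_{1 \le m < n} D_{m,n}$. Once the formulas above are written down, the conclusion is a direct application of the fact that both the Borel and analytic classes in a Polish space are closed under continuous preimages, countable unions, and countable intersections.
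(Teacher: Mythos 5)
Your proof is correct and takes essentially the same approach as the paper: express $ls(S)$ and $li(S)$ as countable Boolean combinations of continuous preimages of $S$ under the power maps $p_n$, intersected with the Borel condition that the relevant powers are pairwise distinct, and then invoke closure of the Borel (resp.\ analytic) class under these operations. The one substantive divergence is in how you parse the definition of $ls(S)$. You read the clause ``$x^m \ne x^n$ for all $1 \le m < n$'' as scoped under the existential $n$, which is why your formula nests $P_n$ inside the union. The paper's displayed formula instead treats it as a separate clause with both $m$ and $n$ universally quantified, i.e.\ it writes $ls(S) = \bigl(\bigcup_{n \ge 1} p_n^{-1}[S]\bigr) \cap \bigl(\bigcup_{1 \le m < n} \{x : x^m = x^n\}\bigr)^{c}$, so that the distinctness requirement is global and identical to the one used in $li(S)$. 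These two readings give different sets in general: for instance if $x \in S$ (so $n = 1$ witnesses membership with a vacuous distinctness condition) but $x^2 = x^3$, then $x$ lies in your $ls(S)$ but not in the paper's. The statement of the lemma is true under either reading, and in the downstream application the two versions coincide (there $S = \complexes \setminus \Omega$ with $\closure(\mathbb{D}) \subseteq \Omega$, so any $\lambda$ with some $\lambda^n \notin \Omega$ satisfies $|\lambda| > 1$ and hence has all powers distinct), so nothing breaks --- but you should be aware that your formula for $ls(S)$ is not the one the paper uses.
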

\begin{proof}
\begin{displaymath}
ls(S) = \left(\bigcup_{n \ge 1} \{ x \in R \ | \ x^{n} \in S \}\right) \cap \left(\bigcup_{1 \le m <n} \{ x \in R \ | \ x^{m} = x^{n} \}\right)^{c}
\end{displaymath}
and
\begin{displaymath}
li(S) = \left(\bigcap_{n \ge 1} \{ x \in R \ | \ x^{n} \in S \}\right) \cap \left(\bigcup_{1 \le m <n} \{ x \in R \ | \ x^{m} = x^{n} \}\right)^{c}
\end{displaymath}
\end{proof}

\begin{theorem}[Weierstrass, {\cite[Theorem 7.32]{burckel}}] \label{thm:Weierstrass}
Let $\Omega \subseteq \complexes$ be open, $A \subseteq \Omega$ be relatively discrete, and $n : A \to
\integers$ be a function with $n(\alpha) \ge 1$ for each $\alpha \in A$.  Then there exists a holomorphic
function $f : \Omega \to \complexes$ so that $f$ has a zero of order $n(\alpha)$ at every $\alpha \in A$ and no
other zeros.
\end{theorem}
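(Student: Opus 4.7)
The plan is the classical Weierstrass infinite-product construction. Since $A$ is relatively discrete in $\Omega$, every compact $K \subset \Omega$ contains only finitely many points of $A$, hence $A$ is countable. If $A$ is finite the polynomial $\prod_{\alpha \in A}(z-\alpha)^{n(\alpha)}$ (viewed on $\Omega$) already works, so I assume $A$ is infinite and enumerate $A = \{\alpha_k\}_{k\ge 1}$.

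The building block is the family of Weierstrass elementary factors $E_p(w) = (1-w)\exp\bigl(w + w^2/2 + \cdots + w^p/p\bigr)$, which satisfy $|1 - E_p(w)| \le |w|^{p+1}$ for $|w| \le 1$. When $\Omega = \complexes$, I arrange (pulling off a factor $z^{n(0)}$ if $0 \in A$) that $|\alpha_k| \to \infty$; then choosing $p_k = k$ makes $\sum_k n(\alpha_k)(R/|\alpha_k|)^{k+1}$ finite for every $R > 0$, so $f(z) = z^{n(0)}\prod_k E_{p_k}(z/\alpha_k)^{n(\alpha_k)}$ converges locally uniformly on $\complexes$ to an entire function with exactly the prescribed zeros.

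For $\Omega \subsetneq \complexes$, I split $A$ into a bounded part $A' = A \cap \overline{B(0,R_0)}$ (for any fixed $R_0 > 0$) and its unbounded complement $A''$. If $A'$ is infinite, its elements must accumulate on $\partial \Omega$ since $A$ has no accumulation in $\Omega$; choosing $\beta_k \in \complexes \setminus \Omega$ closest to each $\alpha_k \in A'$ (which exists because $\complexes \setminus \Omega$ is closed and nonempty) gives $|\alpha_k - \beta_k| \to 0$, so the factors $E_{p_k}\bigl((\alpha_k - \beta_k)/(z - \beta_k)\bigr)$ have arguments going to $0$ uniformly on any compact $K \subset \Omega$ via the lower bound $|z - \beta_k| \ge \operatorname{dist}(K, \complexes \setminus \Omega) > 0$. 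If $A''$ is infinite, then $|\alpha_k| \to \infty$ and factors $E_{p_k}(z/\alpha_k)$ apply exactly as in the entire-function case. The full $f$ is the product of these two subproducts, together with any finite polynomial factor absorbing residual zeros from the splitting.

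The main technical obstacle is selecting the exponents $p_k$ so that the product over $A'$ converges absolutely and locally uniformly on $\Omega$. Concretely, for each compact $K \subset \Omega$ with $d := \operatorname{dist}(K, \complexes \setminus \Omega) > 0$ I need $\sum_k n(\alpha_k)(|\alpha_k - \beta_k|/d)^{p_k+1} < \infty$, which is achieved by $p_k = k$ once $k$ is large enough that $|\alpha_k - \beta_k| \le d/2$. The remaining verifications — that the partial products are holomorphic on $\Omega$ with the prescribed zeros and that uniform convergence preserves both holomorphy and the zero set — are routine invocations of the standard Weierstrass convergence theorem for infinite products of holomorphic functions.
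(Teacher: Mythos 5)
There is no proof in the paper to compare against: the theorem is cited directly from Burckel. Your sketch is the standard Weierstrass-product route, and the entire-domain half and the ``near boundary'' factor idea are both sound, but your decomposition of $A$ in the case $\Omega \subsetneq \complexes$ has a genuine flaw.

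You split $A$ by modulus into $A' = A \cap \overline{B(0,R_0)}$ and $A'' = A \setminus \overline{B(0,R_0)}$ and claim that if $A''$ is infinite then $|\alpha_k| \to \infty$ along $A''$, so the entire-function factors $E_{p_k}(z/\alpha_k)$ can be used. This is false. Take $\Omega = \complexes \setminus \{2\}$, $R_0 = 1$, and $A = \{2 + 1/k : k \ge 1\}$. Then every element of $A$ has modulus in $(2,3)$, so $A'' = A$ is infinite and bounded; the $\alpha_k$ accumulate at $2 \in \complexes \setminus \Omega$, not at $\infty$, and $\prod_k E_{p_k}(z/\alpha_k)$ does not define an entire function (its zero set would accumulate at the finite point $2$). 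The correct split is by distance to the complement, e.g.\ $A_1 = \{\alpha \in A : \operatorname{dist}(\alpha, \complexes \setminus \Omega) \le 1\}$ and $A_2 = A \setminus A_1$; one then checks that if $A_2$ is infinite it must be unbounded (a bounded infinite subset of $A_2$ would have a limit point at distance $\ge 1$ from $\complexes\setminus\Omega$, hence in $\Omega$, a contradiction). Even so, $A_1$ may still be unbounded when $\complexes \setminus \Omega$ is unbounded, and then $|\alpha_k - \beta_k|$ along $A_1$ need not tend to zero; the clean fix is to first apply a Möbius transformation carrying some point of $\complexes\setminus\Omega$ to $\infty$ so that $\complexes\setminus\Omega$ becomes compact, or equivalently to run the whole argument on the Riemann sphere, which is what Burckel and Rudin do. A secondary, easily repaired issue: if the multiplicities $n(\alpha_k)$ grow too fast, your estimate $\sum_k n(\alpha_k)(R/|\alpha_k|)^{k+1}$ can diverge with $p_k = k$; the standard remedy is to list each $\alpha$ with multiplicity $n(\alpha)$ as a sequence (repeating points) before choosing exponents, rather than raising $E_{p_k}$ to the power $n(\alpha_k)$.
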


\begin{lemma} \label{lemma:ThreeEquivalences}
Let $\closure(\mathbb{D}) \subseteq \Omega \subseteq \complexes$ be open, $\lambda \in li(\Omega)$ and
$\mathcal{A}^{\ast}(\Omega) = \mathcal{A}(\Omega) \setminus \{0\}$. Then the following are equivalent:\\
(i)	$|\lambda| > 1$;\\
(ii) $\{ \lambda^{n} \ | \ n \ge 1 \}$ is relatively discrete in $\Omega$;\\
(iii) there exists $f \in \mathcal{A}^{\ast}(\Omega)$ such that $f(\lambda^{n}) = 0$ for all $n \ge 1$.\\
\end{lemma}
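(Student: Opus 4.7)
The plan is to prove the equivalences in the cycle $(i) \Rightarrow (ii) \Rightarrow (iii) \Rightarrow (i)$. The first implication is an elementary growth estimate, the second is a direct application of Theorem \ref{thm:Weierstrass}, and the third is the substantive direction where the hypothesis $\closure(\mathbb{D}) \subseteq \Omega$ and the fact that $\lambda$ is not a root of unity (built into the definition of $li(\Omega)$) combine via the identity theorem to force a contradiction.

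For $(i) \Rightarrow (ii)$: if $|\lambda| > 1$ then $|\lambda^{n}| = |\lambda|^{n} \to \infty$, so $\{\lambda^{n} : n \ge 1\}$ has no accumulation point in $\complexes$, and is therefore closed and discrete in $\Omega$. For $(ii) \Rightarrow (iii)$: since $\lambda \in li(\Omega)$ the powers are pairwise distinct, so $A = \{\lambda^{n} : n \ge 1\}$ is an infinite relatively discrete subset of $\Omega$. Apply Theorem \ref{thm:Weierstrass} with multiplicity $n(\alpha) = 1$ for each $\alpha \in A$ to obtain $f \in \mathcal{A}(\Omega)$ whose zero set is precisely $A$; because $A$ is nonempty, $f$ is not identically zero, so $f \in \mathcal{A}^{\ast}(\Omega)$ and $f(\lambda^{n}) = 0$ for all $n \ge 1$.

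The main obstacle is $(iii) \Rightarrow (i)$. Suppose for contradiction that $|\lambda| \le 1$. Because $\lambda$ is not a root of unity, $\{\lambda^{n} : n \ge 1\}$ is an infinite subset of the compact set $\closure(\mathbb{D})$, and $\closure(\mathbb{D}) \subseteq \Omega$ by hypothesis, so by Bolzano-Weierstrass there is an accumulation point $\alpha \in \closure(\mathbb{D}) \subseteq \Omega$. By continuity $f(\alpha) = 0$, and since the zeros $\lambda^{n_{k}}$ of $f$ converge to the interior point $\alpha$ of $\Omega$, the identity theorem forces $f$ to vanish identically on the connected component $\Omega_{0}$ of $\Omega$ containing $\alpha$. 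Since $\closure(\mathbb{D})$ is connected and contained in $\Omega$, we have $\closure(\mathbb{D}) \subseteq \Omega_{0}$, so in fact every $\lambda^{n}$ lies in $\Omega_{0}$. For connected $\Omega$ this already contradicts $f \in \mathcal{A}^{\ast}(\Omega)$. In the general case one must argue that the existence of such an $f$ on $\Omega$ yields, by restriction to $\Omega_{0}$, a nonzero element of $\mathcal{A}^{\ast}(\Omega_{0})$ vanishing on all powers $\lambda^{n}$ (which live in $\Omega_{0}$), reducing to the connected case; verifying this reduction carefully is the one point where attention is needed.
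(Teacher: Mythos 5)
Your arguments for $(i)\Rightarrow(ii)$ and $(ii)\Rightarrow(iii)$ match the paper's proof essentially verbatim (the paper cites Bernoulli's inequality for the growth estimate, but the substance is the same). For $(iii)\Rightarrow(i)$ you have identified the correct identity-theorem mechanism, and your instinct that something goes wrong when $\Omega$ is disconnected is well founded — but the fix you sketch does not work. If $f\in\mathcal{A}^{\ast}(\Omega)$ vanishes at every $\lambda^{n}$ and all $\lambda^{n}$ lie in the component $\Omega_{0}$, then restriction to $\Omega_{0}$ gives $f\restriction_{\Omega_{0}}$ vanishing at the $\lambda^{n}$, but there is no reason $f\restriction_{\Omega_{0}}$ is a \emph{nonzero} element of $\mathcal{A}(\Omega_{0})$: nothing forbids $f$ from being identically zero on $\Omega_{0}$ while being, say, constantly $1$ on another component. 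In fact the implication $(iii)\Rightarrow(i)$ is genuinely \emph{false} for disconnected $\Omega$: take $\Omega = \mathbb{D}(0,2)\cup\mathbb{D}(10,1)$ and $\lambda = 1/2$, so $\lambda\in li(\Omega)$ with $|\lambda|\le 1$, yet the function that is $0$ on $\mathbb{D}(0,2)$ and $1$ on $\mathbb{D}(10,1)$ lies in $\mathcal{A}^{\ast}(\Omega)$ and kills every $\lambda^{n}$. The paper's own proof has exactly the same silent connectedness assumption (its step ``has a limit point $\ldots$ which in turn implies that $f$ is identically zero'' only holds for connected $\Omega$); the lemma is correct as used, since Theorem \ref{theorem:ring-complete} reduces to the connected case before invoking it, but the hypothesis should be stated. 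So: for connected $\Omega$ your proof is complete and agrees with the paper; there is no correct reduction for disconnected $\Omega$, and you should simply add connectedness to the hypotheses rather than try to verify the claimed reduction.
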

\begin{proof}
(i) $\implies$ (ii).  Suppose that $\lambda \in li(\Omega)$ with $|\lambda| > 1$.
Then an easy induction shows that $|\lambda|^{n} \ge 1 + n(|\lambda| -
1)$ (Bernoulli's Inequality).  Therefore $\{ |\lambda^{n}| \ | \ n \in \naturals \}$ is a sequence of strictly
increasing numbers which tends towards infinity. This guarantees that $\{ \lambda^{n} \ | \ n \in \naturals
\}$ has no accumulation points in $\complexes$ let alone in $\Omega$.

(ii) $\implies$ (iii).  This follows immediately from Theorem \ref{thm:Weierstrass}.

(iii) $\implies$(i).  Suppose $\lambda \in li(\Omega)$ and $|\lambda| \le 1$.   Then the distinct sequence $\{
\lambda^{n} \}_{n \ge 1}$ has a limit point in $\closure(\mathbb{D}) \subseteq \Omega$, which in turn implies
that $f$ is identically zero, a contradiction.
\end{proof}

\begin{theorem} \label{theorem:ring-complete}
If $\Omega \subseteq \complexes$ is open, then $\mathcal{A}(\Omega)$ is an algebraically determined Polish
ring.
\end{theorem}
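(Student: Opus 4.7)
The plan is to reduce the theorem to the connected case via Corollary \ref{corollary:rings-disconnected-domain}, and then to handle the connected case uniformly using the $li$-invariant, Weierstrass' theorem, and the descriptive set-theoretic machinery of Section 4. By that corollary, I may assume $\Omega$ is open and connected. Let $\varphi : R \to \mathcal{A}(\Omega)$ be an abstract ring isomorphism with $R$ a Polish ring; I want to show $\varphi$ is a homeomorphism.

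Since $\Omega$ is open and nonempty, some $\closure(B(\beta, r)) \subseteq \Omega$, so the affine conformal map $\zeta \mapsto (\zeta - \beta)/r$ sends $\Omega$ onto an open set containing $\closure(\mathbb{D})$. By Proposition \ref{proposition:conformal-ad}, algebraic determination is preserved under conformal equivalence, so I may further assume $\closure(\mathbb{D}) \subseteq \Omega$. By Lemma \ref{lem:decomposition}, $S := \varphi^{-1}[\complexes]$ is a closed (hence Polish) subring of $R$, and $\varphi_0 := \varphi\restriction_S : S \to \complexes$ is a ring isomorphism. By Theorem \ref{thm:fundamental}, it suffices to show that $\varphi_0$ is $\mathscr{BP}$-measurable, which I will do by applying Proposition \ref{prop:ContinuityByExterior} to $\varphi_0$.

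The core of the argument is to exhibit a Borel set $V \subseteq \complexes$ with $\interior(V) \ne \emptyset \ne \interior(V^c)$ such that $\varphi_0^{-1}[V]$ has the Baire property in $S$. I take $V = li(\Omega) \cap \closure(\mathbb{D})$. For any $\lambda$ with $0 < |\lambda| < 1$, the powers $\lambda^n$ remain in $\mathbb{D} \subseteq \Omega$ and are pairwise distinct (a coincidence would force $|\lambda| = 1$), so $\lambda \in li(\Omega)$; therefore $V$ contains the open set $\{\lambda : 0 < |\lambda| < 1\}$, and $V \subseteq \closure(\mathbb{D})$ gives $V^c \supseteq \{\lambda : |\lambda| > 1\}$. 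Lemma \ref{lemma:ThreeEquivalences} now supplies the algebraic content: for $\lambda \in li(\Omega)$, $|\lambda| \leq 1$ is equivalent to the non-existence of an $f \in \mathcal{A}(\Omega) \setminus \{0\}$ vanishing on every $\lambda^n$. Setting $z_R := \varphi^{-1}(z)$, this translates to the condition that $x \in \varphi_0^{-1}[V]$ iff $x \in li(\varphi^{-1}[\Omega])$ and no $y \in R \setminus \{0\}$ lies in $(z_R - x^n) R$ for every $n \ge 1$. By Lemmas \ref{lemma:rings-some-Borel-sets} and \ref{lemma:lsliBorel}, the first condition defines a Borel subset of $S$; the second condition, upon writing out the $\exists y,\, \exists (w_n)_{n \ge 1}$ quantifiers, is the complement of an analytic set. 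Hence $\varphi_0^{-1}[V]$ is a Borel set minus an analytic set and therefore has the Baire property.

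The main obstacle is precisely the construction of $V$: we need a subset of $\complexes$ that is both geometrically visible (nonempty interior and nonempty exterior) and algebraically definable in terms of the ring structure of $\mathcal{A}(\Omega)$. The $li$-operator combined with Weierstrass' theorem via Lemma \ref{lemma:ThreeEquivalences} is exactly the device that bridges geometry and algebra. Once $\varphi_0^{-1}[V]$ is known to have the Baire property, Proposition \ref{prop:ContinuityByExterior} yields that $\varphi_0$ is a topological isomorphism, and Theorem \ref{thm:fundamental} promotes this to $\varphi$ being a homeomorphism.
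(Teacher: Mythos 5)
Your proposal is correct and follows the same overall architecture as the paper's proof: reduce to the connected case via Corollary \ref{corollary:rings-disconnected-domain}, normalize so $\closure(\mathbb{D}) \subseteq \Omega$ via Proposition \ref{proposition:conformal-ad}, reduce to continuity of $\varphi\restriction_{\varphi^{-1}[\complexes]}$ via Theorem \ref{thm:fundamental}, and establish $\mathscr{BP}$-measurability via Proposition \ref{prop:ContinuityByExterior}, with Lemma \ref{lemma:ThreeEquivalences} (the $li$-invariant plus Weierstrass) supplying the algebraic handle on $\closure(\mathbb{D})$. Where you diverge slightly, and arguably more cleanly, is in the endgame. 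The paper shows $\varphi^{-1}[\closure(\mathbb{D})]$ has the Baire property by decomposing $\closure(\mathbb{D})^{c} = ls(\complexes\setminus\Omega) \cup \{\lambda \in li(\Omega) : |\lambda|>1\}$ and then exhibiting the second piece's preimage as analytic by constructing a Borel partial inverse $\pi^{\ast}$ to the projection of the closed set $A = \{\langle x,a,y,b\rangle : x = (\varphi^{-1}(z)-a)y + b\}$, thereby obtaining a Borel ``abstract point-evaluation'' $p\circ\pi^{\ast}(x,a) = \varphi^{-1}(\varphi(x)(\varphi(a)))$ and forming a Borel set $B$ whose projection is the target. You instead choose $V = li(\Omega)\cap\closure(\mathbb{D})$, which lets you skip the $ls(\complexes\setminus\Omega)$ piece entirely, and you encode ``$\varphi(y)$ vanishes at $\varphi(x)^{n}$'' directly as principal-ideal membership $y \in (z_{R}-x^{n})R$, exhibiting the offending set as a projection over $R\times R^{\naturals}$ of a closed set intersected with Borel constraints — no Borel inverse needed. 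This buys a shorter argument for analyticity at the cost of the reusable Borel point-evaluation map the paper constructs; for this theorem alone, your route suffices. One point worth making explicit in a final write-up: the equivalence of vanishing with ideal membership uses that $\varphi(x)^{n} \in \Omega$ for all $n$, which is exactly what $x \in li(\varphi^{-1}[\Omega])$ provides, so the quantifier must be restricted accordingly (as you do).
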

\begin{proof}
Let $R$ be a Polish ring and let $\varphi : R \to \mathcal{A}(\Omega)$ be an algebraic isomorphism. Our
goal is to prove that $\varphi$ is a topological isomorphism.  We can and do assume that $\Omega$ is connected
by Corollary \ref{corollary:rings-disconnected-domain}.

We can assume that $\closure(\mathbb{D}) \subseteq \Omega \subseteq \complexes$.  If not, apply a simple conformal
mapping of the form $\zeta \mapsto \alpha \zeta + \beta$ and apply Proposition \ref{proposition:conformal-ad}.

$\mathbb{K} = \varphi^{-1}[\complexes]$ is a closed (and therefore Polish) subring of $R$ by Lemma
\ref{lem:decomposition}. To prove the theorem it suffices to show that $\varphi : \mathbb{K} \to
\complexes$ is $\mathscr{BP}$-measurable by Theorem \ref{thm:fundamental}. This will be accomplished by
Proposition \ref{prop:ContinuityByExterior} if we prove that $\varphi^{-1}[\closure(\mathbb{D})] \subseteq
\mathbb{K}$ has the Baire property. We know already that $\varphi^{-1}[\complexes \setminus \Omega] \subseteq
\mathbb{K}$ and $\varphi^{-1}[\Omega] \subseteq \mathbb{K}$ are Borel sets by Lemma
\ref{lemma:rings-some-Borel-sets}. Furthermore $ls(\complexes \setminus \Omega)$ is a Borel set by Lemma
\ref{lemma:lsliBorel}. $(\complexes \setminus \Omega) \cap \closure(\mathbb{D}) = \emptyset$, so if
$\lambda^{n} \in \complexes \setminus \Omega$, then $|\lambda| > 1$ and therefore $ls(\complexes \setminus
\Omega) \cap \closure(\mathbb{D}) = \emptyset$. Notice that $\closure(\mathbb{D})^{c} = ls(\complexes
\setminus \Omega) \cup \{ \lambda \in li(\Omega) \ | \ |\lambda| > 1 \}$. We will be done if we prove that
$\varphi^{-1}[\{ \lambda \in li(\Omega) \ | \ |\lambda| > 1 \}]$ has the Baire property.

Define
\begin{displaymath}
A = \{ \langle x,a,y,b \rangle  \in (R \times \varphi^{-1}[\Omega] \times R \times \mathbb{K}) \ | \  x = (\varphi^{-1}(z) - a) y + b \}
\end{displaymath}
and consider the continuous map $\pi : R \times \varphi^{-1}[\Omega] \times R \times \mathbb{K} \to R
\times \varphi^{-1}[\Omega]$ defined by $\pi(x,a,y,b) = \langle x,a \rangle$.  $A$ is relatively closed in $R \times
\varphi^{-1}[\Omega] \times R \times \mathbb{K}$ and therefore is a Borel subset of $R \times
\varphi^{-1}[\Omega] \times R \times \mathbb{K}$.

We will show that $\pi\restriction_{A}$ is an injective mapping onto $R \times \varphi^{-1}[\Omega]$.  Suppose
\begin{displaymath}
\langle x,a,y_{1},b_{1}\rangle\text{, } \langle x,a,y_{2},b_{2} \rangle \in A.
\end{displaymath}
\noindent
It follows that $(\varphi^{-1}(z) - a) y_{1} + b_{1} = (\varphi^{-1}(z) - a) y_{2} + b_{2}$ necessitating
\begin{displaymath}
(z - \varphi(a))\varphi(y_{1}) + \varphi(b_{1}) = (z - \varphi(a))\varphi(y_{2}) + \varphi(b_{2}).
\end{displaymath}
Since $\varphi(b_{1})$, $\varphi(b_{2}) \in \complexes$, if we evaluate at $\varphi(a)$, we see that
$\varphi(b_{1}) = \varphi(b_{2})$.
Consequently,
\begin{displaymath}
(z - \varphi(a)) \varphi(y_{1}) = (z - \varphi(a)) \varphi(y_{2}) \implies \varphi(y_{1}) = \varphi(y_{2}).
\end{displaymath}
Hence, $y_{1} = y_{2}$ and $b_{1} = b_{2}$ which establishes that $\pi \restriction_{A}$ is an injection.

To see that $\pi \restriction_{A}$ is a surjection onto $R \times \varphi^{-1}[\Omega]$,
let $\langle x,a \rangle \in R \times \varphi^{-1}[\Omega]$.  Let $f = \varphi(x)$ and $\varphi(a) = \alpha$, then there
exists $g \in \mathcal{A}(\Omega)$ so that $f = (z - \alpha)g + f(\alpha)$.  If $y = \varphi^{-1}(g)$ and $b =
\varphi^{-1}(f(\alpha))$, then $x = (\varphi^{-1}(z) - a)y + b$, $\langle x,a,y,b \rangle \in A$ and $\pi(x,a,y,b) = \langle x,a \rangle$.

Now, as $\pi \restriction_{A}$ is a continuous bijection onto $R \times \varphi^{-1}[\Omega]$, let $\pi^{\ast}
: R \times \varphi^{-1}[\Omega] \to R \times \varphi^{-1}[\Omega] \times R \times \mathbb{K}$ be the
Borel mapping $\pi^{\ast} = (\pi\restriction_{A})^{-1}$. Let $p: R \times \varphi^{-1}[\Omega] \times R \times
\mathbb{K} \to \mathbb{K}$ be the projection onto the fourth coordinate, $p(x,a,y,b) = b$.

For each $n \ge 1$, notice that $\langle x,a \rangle \mapsto \langle x,a^{n} \rangle$, $R \times \mathbb{K} \to R \times \mathbb{K}$,
is continuous. Lemma \ref{lemma:lsliBorel} implies that $li(\varphi^{-1}[\Omega])$ is a Borel set since
$\varphi^{-1}[\Omega]$ is a Borel set. Note that $a^{n} \in li(\varphi^{-1}[\Omega])$ for all $n \ge 1$ if $a
\in li(\varphi^{-1}[\Omega])$.

With all this in hand, we now define
\begin{displaymath}
B = \{ \langle x,a \rangle \in R^{\ast} \times li(\varphi^{-1}[\Omega]) \ | \ (p \circ \pi^{\ast})(x,a^{n}) = 0 \text{ for all } n \ge 1 \}
\end{displaymath}
where $R^{\ast} = R \setminus \{0\}$. Notice that $B$ is a Borel subset of $R \times li(\varphi^{-1}[\Omega])$
as $R^{\ast} \times \varphi^{-1}[\Omega]$ is a Borel subset of $R \times \mathbb{K}$ and $\langle x,a \rangle \mapsto (p
\circ \pi^{\ast})(x,a^{n})$, $R \times \varphi^{-1}[\Omega] \to \mathbb{K}$, is a Borel mapping
for each $n \ge 1$.  Hence, letting $P: R \times \varphi^{-1}[\Omega] \to \varphi^{-1}[\Omega]$ be the
projection onto the second coordinate, we see that $P[B]$ is an analytic set.

The next step is to see that
\begin{displaymath}
P[B] = \varphi^{-1}[\{ \lambda \in li(\Omega) \ | \ |\lambda| > 1 \}].
\end{displaymath}
\noindent
Toward this end, we show that $(p \circ \pi^{\ast})(x,a) = \varphi^{-1}(\varphi(x)(\varphi(a)))$. Let
$\langle x,a,y,b \rangle = \pi^{\ast}(x,a)$ and notice that
\begin{displaymath}
\varphi(x) = (z - \varphi(a))\varphi(y) + \varphi(b) \implies \varphi(x)(\varphi(a)) = \varphi(b) \implies b = \varphi^{-1}(\varphi(x)(\varphi(a))).
\end{displaymath}
This establishes that $p \circ \pi^{\ast}(x,a) = \varphi^{-1}(\varphi(x)(\varphi(a)))$.

Notice that $\varphi^{-1}[li(\Omega)] = li(\varphi^{-1}[\Omega])$.  Now suppose $a \in P[B]$ and let $x \in
R^{\ast}$ be so that $\langle x,a \rangle \in B$. By our definition of $B$, $a \in li(\varphi^{-1}[\Omega])$ which, by our
initial observation, implies that $\varphi(a) \in li(\Omega)$.  By virtue of $\langle x,a \rangle \in B$, we have that
\begin{align*}
0 = (p \circ \pi^{\ast})(x,a^{n}) = \varphi^{-1}(\varphi(x)(\varphi(a^{n}))) &= \varphi^{-1}(\varphi(x)(\varphi(a)^{n}))\\
&\implies \varphi(x)(\varphi(a)^{n}) = 0
\end{align*}
for each $n \ge 1$. Hence, since $x \neq 0$ necessitates $\varphi(x) \neq 0$, we apply Lemma
\ref{lemma:ThreeEquivalences} to conclude that $|\varphi(a)| > 1$.  Therefore $a \in P[B]$ implies that $a \in
\varphi^{-1}[\{ \lambda \in li(\Omega) \ | \ |\lambda| > 1 \}]$ and $P[B] \subseteq \varphi^{-1}[\{ \lambda \in
li(\Omega) \ | \ |\lambda| > 1 \}]$.

On the other hand suppose that $a \in \varphi^{-1}[\{ \lambda \in li(\Omega) \ | \ |\lambda| > 1 \}]$ then
$\varphi(a) \in li(\Omega)$.  Pick a non-zero $f \in \mathcal{A}(\Omega)$ so that $f(\varphi(a)^{n}) = 0$ for
each $n \ge 1$ by Theorem \ref{thm:Weierstrass}.  Hence, $\langle \varphi^{-1}(f),a \rangle \in B$, so $a \in P[B]$,
$\varphi^{-1}[\{ \lambda \in li(\Omega) \ | \ |\lambda| > 1 \}] \subseteq P[B]$, $P[B] = \varphi^{-1}[\{ \lambda
\in li(\Omega) \ | \ |\lambda| > 1 \}]$ and $\varphi^{-1}[\{ \lambda \in li(\Omega) \ | \ |\lambda| > 1 \}]$
is an analytic subset of $\mathbb{K}$.
\end{proof}


\section{A Theorem of Bers}

The purpose of this section is to prove by general principles a theorem of Bers \cite{bers-1948a} as an easy
application of Theorem \ref{theorem:ring-complete}.

\begin{proposition}
Suppose $\gamma : \Omega_{1} \to \Omega_{2}$ is a conformal mapping where $\Omega_{1}$ and $\Omega_{2}$ are
open and $\Omega_{1}$ is connected.
Then $\varphi : \mathcal{A}(\Omega_{1}) \to \mathcal{A}(\Omega_{2})$ defined by $\varphi(f) = f \circ
\gamma^{-1}$ is a topological isomorphism of rings.  The same is true if $\gamma$ is anti-conformal.
\end{proposition}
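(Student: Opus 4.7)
The plan is to recognize this as a near-immediate corollary of Theorem \ref{theorem:ring-complete}: essentially all that remains is to witness $\varphi$ as an abstract ring isomorphism between Polish rings, at which point algebraic determinacy of the target does the rest.

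First I would verify that $\varphi$ is a well-defined abstract ring isomorphism in the conformal case. Since $\gamma$ is conformal, $\gamma^{-1} : \Omega_2 \to \Omega_1$ is analytic, so for any $f \in \mathcal{A}(\Omega_1)$ the composition $f \circ \gamma^{-1}$ lies in $\mathcal{A}(\Omega_2)$; thus $\varphi$ maps into the correct space. Pointwise addition and multiplication are preserved under precomposition, so $\varphi$ is a ring homomorphism, and its two-sided inverse is plainly given by $g \mapsto g \circ \gamma$. Hence $\varphi$ is an abstract ring isomorphism between the Polish rings $\mathcal{A}(\Omega_1)$ and $\mathcal{A}(\Omega_2)$.

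Next I would invoke Theorem \ref{theorem:ring-complete}, which asserts that $\mathcal{A}(\Omega_2)$ is an algebraically determined Polish ring. By the definition of algebraically determined, any abstract ring isomorphism from a Polish ring onto $\mathcal{A}(\Omega_2)$ is automatically a topological isomorphism; applied to $\varphi$, this is exactly what we want.

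Finally, for the anti-conformal case the idea is to absorb the conjugation into an auxiliary domain. Setting $\Omega_2' = \overline{z}[\Omega_2]$, the composition $\overline{z} \circ \gamma : \Omega_1 \to \Omega_2'$ is conformal, and $\Omega_2'$ is connected as the homeomorphic image of $\Omega_1$. The conformal case already proved yields a topological ring isomorphism $\mathcal{A}(\Omega_1) \to \mathcal{A}(\Omega_2')$, and Lemma \ref{lem:AntiConformalBijectionBetweenFunctions} supplies a topological ring isomorphism $\mathcal{A}(\Omega_2') \to \mathcal{A}(\Omega_2)$ via $g \mapsto \overline{z} \circ g \circ \overline{z}\restriction_{\Omega_2}$; composing the two produces the desired topological ring isomorphism. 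There is no genuine obstacle here: once Theorem \ref{theorem:ring-complete} is in hand, the present statement is a short assembly of pieces already built in previous sections.
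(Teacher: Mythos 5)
Your proof is correct, but it takes a considerably heavier route than the paper intends. This unlabelled proposition is one the paper leaves to the reader as easy: a direct argument shows that $\varphi$ and $\varphi^{-1}$ are continuous in the compact-open topology (for compact $K \subseteq \Omega_{2}$, the set $\gamma^{-1}[K]$ is compact in $\Omega_{1}$, so uniform convergence on $\gamma^{-1}[K]$ transfers to uniform convergence of the composites on $K$, and symmetrically for $\varphi^{-1}(g) = g \circ \gamma$), and preservation of the ring operations is immediate since composition commutes with pointwise addition and multiplication. By contrast you invoke Theorem \ref{theorem:ring-complete}, the main structural result of the ring sections. That theorem is indeed available at the point in the paper where this proposition is stated, so your argument is valid as written; but note that the proof of Theorem \ref{theorem:ring-complete} itself cites Proposition \ref{proposition:conformal-ad} --- essentially this same proposition --- for the reduction step $\zeta \mapsto \alpha\zeta + \beta$ placing $\closure(\mathbb{D}) \subseteq \Omega$. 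There is no strict circularity, since only the trivial linear case is used there, but the elementary direct proof is shorter, logically prior, and does not lean on the paper's heaviest machinery to establish what is really a warm-up fact. Your handling of the anti-conformal case via $\Omega_{2}' = \overline{z}[\Omega_{2}]$ and Lemma \ref{lem:AntiConformalBijectionBetweenFunctions} is correct and mirrors the paper's conventions.
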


\begin{lemma} \label{lem:image-of-scalars}
Let $\varphi : \mathcal{A}(\Omega_{1}) \to \mathcal{A}(\Omega_{2})$ be a ring isomorphism where $\Omega_{1}$
and $\Omega_{2}$ are connected.  Then $\varphi[\complexes] = \complexes$ and $\varphi$ is either the identity
or complex conjugation on $\complexes$.
\end{lemma}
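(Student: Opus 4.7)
The plan is to invoke Theorem~\ref{theorem:ring-complete} to promote $\varphi$ to a topological isomorphism and then to read off the scalars point-by-point. Specifically, for each $\tilde\alpha \in \Omega_2$ the evaluation $\mathrm{ev}_{\tilde\alpha}: \mathcal{A}(\Omega_2) \to \complexes$ is continuous, and the inclusion $\lambda \mapsto c_\lambda$, $\complexes \to \mathcal{A}(\Omega_1)$, is continuous by Proposition~\ref{proposition:complex-vector-spaces}, so I can form the composite
\begin{displaymath}
\sigma_{\tilde\alpha}(\lambda) = \varphi(c_\lambda)(\tilde\alpha), \qquad \complexes \to \complexes,
\end{displaymath}
which is a continuous ring homomorphism with $\sigma_{\tilde\alpha}(1) = 1$.

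The next step is to pin down each $\sigma_{\tilde\alpha}$. Any continuous additive endomorphism of $\complexes$ that fixes $1$ is $\rationals$-linear by additivity and hence $\reals$-linear by continuity; combined with the identity $\sigma_{\tilde\alpha}(i)^2 = \sigma_{\tilde\alpha}(-1) = -1$, this forces $\sigma_{\tilde\alpha}(i) \in \{i,-i\}$, and therefore $\sigma_{\tilde\alpha}$ must be either the identity or complex conjugation.

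To see that this automorphism is independent of $\tilde\alpha$, I would use connectedness: $\varphi(c_i) \in \mathcal{A}(\Omega_2)$ satisfies $\varphi(c_i)^2 = -1$, so its values lie in $\{i,-i\}$; since $\varphi(c_i)$ is analytic and $\Omega_2$ is connected, $\varphi(c_i)$ is the constant $i$ or the constant $-i$. Hence a single $\sigma \in \{\mathrm{id},\, \overline{\,\cdot\,}\}$ works at every $\tilde\alpha$, giving $\varphi(c_\lambda) = c_{\sigma(\lambda)}$ for every $\lambda \in \complexes$ and thus $\varphi[\complexes] \subseteq \complexes$. The reverse inclusion follows by applying the same argument to $\varphi^{-1}$, yielding $\varphi[\complexes] = \complexes$ together with the advertised dichotomy for $\varphi|_{\complexes}$.

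The only substantive input is the continuity of $\varphi$; once Theorem~\ref{theorem:ring-complete} is on the table, each step above is routine, so I do not anticipate a real obstacle. The main conceptual point is simply that the algebraic equation $i^2 = -1$, together with analyticity and connectedness of $\Omega_2$, is rigid enough to freeze the finitely many available continuous choices into a single global one.
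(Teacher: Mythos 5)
Your proof is correct and uses essentially the same argument as the paper: invoke Theorem~\ref{theorem:ring-complete} for continuity, note that $\varphi$ fixes $\rationals$ algebraically and hence $\reals$ by density and continuity, and then deduce $\varphi(i)=\pm i$ from $i^2=-1$, with connectedness of $\Omega_2$ forcing a single global choice of sign. The only difference is presentational — you route through the pointwise evaluations $\sigma_{\tilde\alpha}$, whereas the paper argues directly with constant functions in $\mathcal{A}(\Omega_2)$ and leaves the connectedness step implicit.
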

\begin{proof}
$\varphi(1) = 1$, therefore $\varphi(m) = m$ for $m \in \integers$ and furthermore $\varphi(r) = r$ for all $r
\in \rationals$.   $\varphi$ is continuous by Theorem \ref{theorem:ring-complete} and therefore $\varphi(x) =
x$ for all $x \in \reals$.  $\varphi(i)^{2} = \varphi(i^{2}) = \varphi(-1) = -1$ so $\varphi(i) = \pm i$.
Hence $\varphi[\complexes] = \complexes$ and $\varphi$ behaves either like the identity or complex conjugation
on $\complexes$.
\end{proof}

\begin{lemma} \label{lem:producing_a_bijection_between_domains}
Let $\varphi : \mathcal{A}(\Omega_{1}) \to \mathcal{A}(\Omega_{2})$ be an abstract ring isomorphism where
$\Omega_{1}$ and $\Omega_{2}$ are open and $\Omega_{1}$ is connected. Then there is a bijection $\gamma :
\Omega_{1} \to \Omega_{2}$ such that $\varphi(f(\alpha)) = \varphi(f)(\gamma(\alpha))$ for any $f \in
\mathcal{A}(\Omega_{1})$ and $\alpha \in \Omega_{1}$.
\end{lemma}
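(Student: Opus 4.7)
The plan is to obtain $\gamma$ from the Gelfand-style correspondence between points of $\Omega_i$ and principal maximal ideals given by Lemma \ref{lemma:characterization-of-ideals}, and then read off the evaluation formula from the division identity $f = (z - \alpha)g + f(\alpha)$.

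First I would verify that $\Omega_2$ is automatically connected. Since $\Omega_1$ is connected, Proposition \ref{proposition:connected-iff-integral-domain} says $\mathcal{A}(\Omega_1)$ is an integral domain, and being integral is preserved under ring isomorphism, so $\mathcal{A}(\Omega_2)$ is an integral domain and $\Omega_2$ is connected. This is what allows Lemma \ref{lem:image-of-scalars} to apply, so $\varphi[\complexes] = \complexes$ and, in particular, $\varphi$ carries constant functions to constant functions. This is essential because the left-hand side $\varphi(f(\alpha))$ must be interpretable as a scalar (and hence evaluable at any point with the same value) for the stated identity to even parse.

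Next I would construct $\gamma$. For each $\alpha \in \Omega_1$ the ideal $M_\alpha \subseteq \mathcal{A}(\Omega_1)$ is proper, principal (generated by $z - \alpha$), and maximal, so its image $\varphi[M_\alpha]$ is a proper principal maximal ideal in $\mathcal{A}(\Omega_2)$. By Lemma \ref{lemma:characterization-of-ideals} there is some $\gamma(\alpha) \in \Omega_2$ with $\varphi[M_\alpha] = M_{\gamma(\alpha)}$; it is unique because distinct points in $\Omega_2$ yield distinct maximal ideals (for any $\beta_1 \ne \beta_2$ in $\Omega_2$, the function $z - \beta_1$ vanishes at $\beta_1$ but not at $\beta_2$). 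Applying the same recipe to $\varphi^{-1}$ produces a map $\delta : \Omega_2 \to \Omega_1$, and $\delta \circ \gamma$ and $\gamma \circ \delta$ are both the identity since maximal ideals correspond uniquely under both directions; hence $\gamma$ is a bijection.

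Finally I would verify the compatibility. Fix $\alpha \in \Omega_1$ and $f \in \mathcal{A}(\Omega_1)$, and use the decomposition $f = (z - \alpha) g + f(\alpha)$ from Section \ref{section-roafasc}. Applying the ring homomorphism $\varphi$ gives
\begin{displaymath}
\varphi(f) = \varphi(z - \alpha)\,\varphi(g) + \varphi(f(\alpha)).
\end{displaymath}
Since $\varphi(z - \alpha) \in \varphi[M_\alpha] = M_{\gamma(\alpha)}$, its value at $\gamma(\alpha)$ is zero, and $\varphi(f(\alpha))$ is a constant function (by the first paragraph) whose value equals $\varphi(f(\alpha))$ everywhere. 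Evaluating at $\gamma(\alpha)$ yields $\varphi(f)(\gamma(\alpha)) = \varphi(f(\alpha))$, as required.

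I do not anticipate a real obstacle; the structure is forced by the maximal-ideal/point dictionary. The only step requiring care is the opening observation that $\Omega_2$ must be connected, which ensures both that Lemma \ref{lem:image-of-scalars} is available and that the notation in the statement is well-defined.
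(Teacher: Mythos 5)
Your proof takes essentially the same route as the paper's: pass between points and proper principal maximal ideals via Lemma \ref{lemma:characterization-of-ideals} to build $\gamma$, use Lemma \ref{lem:image-of-scalars} to see that $\varphi(f(\alpha))$ is a constant, and then obtain the evaluation formula from $f - f(\alpha) \in M_\alpha$ (your division $f = (z-\alpha)g + f(\alpha)$ is the same observation). Your opening check that $\Omega_2$ is automatically connected, via Proposition \ref{proposition:connected-iff-integral-domain}, is a worthwhile detail that the paper leaves implicit but that the stated hypotheses of Lemma \ref{lem:image-of-scalars} actually require.
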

\begin{proof}
For $\alpha \in \Omega_{1}$ and $\beta \in \Omega_{2}$, let $M_{\alpha} = \{ f \in \mathcal{A}(\Omega_{1}) \ |
\ f(\alpha) = 0\}$ and $M^{\ast}_{\beta} = \{ f \in \mathcal{A}(\Omega_{2}) \ | \ f(\beta) = 0 \}$.
Now, let $\alpha \in \Omega_{1}$ be arbitrary and choose $\gamma(\alpha) \in \Omega_{2}$, using Lemma
\ref{lemma:characterization-of-ideals}, so that
\begin{displaymath}
M^{\ast}_{\gamma(\alpha)} = \varphi[M_{\alpha}].
\end{displaymath}
$\gamma : \Omega_{1} \to \Omega_{2}$ is a bijection since $\varphi$ is an isomorphism of rings.

Notice that $f - f(\alpha) \in M_{\alpha}$. Then $\varphi(f) - \varphi(f(\alpha)) \in
M^{\ast}_{\gamma(\alpha)}$. Since $\varphi(f(\alpha)) \in \complexes$ by Lemma \ref{lem:image-of-scalars}, we see
that $\varphi(f)(\gamma(\alpha)) - \varphi(f(\alpha)) = 0$. That is, $\varphi(f(\alpha)) =
\varphi(f)(\gamma(\alpha))$.
\end{proof}

\begin{theorem}[Bers \cite{bers-1948a}] \label{thm:Bers}
Let $\varphi : \mathcal{A}(\Omega_{1}) \to \mathcal{A}(\Omega_{2})$ be an abstract ring isomorphism where $\Omega_{1}$ is open and connected.
Then there exists a conformal or anti-conformal mapping $\gamma : \Omega_{1} \to \Omega_{2}$ so that
\begin{displaymath}
\varphi(f) = f \circ \gamma^{-1} \text{ or } \varphi(f) = \overline{f \circ \gamma^{-1}}.
\end{displaymath}
\end{theorem}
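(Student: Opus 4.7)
The plan is to promote the set-theoretic bijection $\gamma: \Omega_{1} \to \Omega_{2}$ supplied by Lemma \ref{lem:producing_a_bijection_between_domains} to a conformal (or anti-conformal) map by probing $\varphi$ on the identity function $z \in \mathcal{A}(\Omega_{1})$. Before that, I would note that connectedness of $\Omega_{1}$ makes $\mathcal{A}(\Omega_{1})$ an integral domain by Proposition \ref{proposition:connected-iff-integral-domain}, hence so is $\mathcal{A}(\Omega_{2})$, so $\Omega_{2}$ is also connected and Lemmas \ref{lem:image-of-scalars} and \ref{lem:producing_a_bijection_between_domains} both apply. Together they give that $\varphi|_{\complexes}$ is either the identity or complex conjugation, and that
\[
    \varphi(f)(\gamma(\alpha)) = \varphi(f(\alpha))
\]
for every $f \in \mathcal{A}(\Omega_{1})$ and every $\alpha \in \Omega_{1}$.

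Consider first the case where $\varphi|_{\complexes}$ is the identity. Specializing the displayed identity to $f = z$ yields $\varphi(z)(\gamma(\alpha)) = \alpha$, so $\gamma^{-1}$ coincides with $\varphi(z) \in \mathcal{A}(\Omega_{2})$ and is therefore holomorphic. Running the same argument for $\varphi^{-1}$ applied to the identity function on $\Omega_{2}$ shows that $\gamma$ is holomorphic as well, so $\gamma$ is conformal. Substituting $\beta = \gamma(\alpha)$ into the displayed identity then gives $\varphi(f)(\beta) = f(\gamma^{-1}(\beta))$ for all $\beta \in \Omega_{2}$, which is exactly the formula $\varphi(f) = f \circ \gamma^{-1}$.

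When $\varphi|_{\complexes}$ is instead complex conjugation, I would reduce to the first case using the topological ring isomorphism $\psi : \mathcal{A}(\Omega_{2}) \to \mathcal{A}(\overline{z}[\Omega_{2}])$ supplied by Lemma \ref{lem:AntiConformalBijectionBetweenFunctions}. A direct check shows that $\psi \circ \varphi$ fixes $\complexes$ pointwise, so the previous paragraph produces a conformal $\tilde\gamma : \Omega_{1} \to \overline{z}[\Omega_{2}]$ with $(\psi \circ \varphi)(f) = f \circ \tilde\gamma^{-1}$. Applying $\psi^{-1}$ and setting $\gamma(\alpha) = \overline{\tilde\gamma(\alpha)}$ yields an anti-conformal bijection $\gamma : \Omega_{1} \to \Omega_{2}$ which realizes the formula $\varphi(f) = \overline{f \circ \gamma^{-1}}$.

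The substantive work has already been done earlier in the paper: Theorem \ref{theorem:ring-complete} forces $\varphi$ to be continuous (used inside Lemma \ref{lem:image-of-scalars}), and the maximal-ideal analysis of Lemma \ref{lemma:characterization-of-ideals} produces the bijection $\gamma$. The only residual obstacle is bookkeeping in the conjugation case, where one has to verify that the $\gamma$ manufactured out of $\tilde\gamma$ agrees with the bijection supplied by Lemma \ref{lem:producing_a_bijection_between_domains}, and one must track the conjugations inside $\psi$ and $\psi^{-1}$ without sign errors. This is a short calculation and requires no new analytic input.
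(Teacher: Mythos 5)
Your proof is correct and follows essentially the same path as the paper's: obtain the bijection $\gamma$ from Lemma~\ref{lem:producing_a_bijection_between_domains}, feed the test function $z$ into the identity $\varphi(f)(\gamma(\alpha))=\varphi(f(\alpha))$ to recognize $\gamma^{-1}$ as $\varphi(z)$ (respectively $\overline{\varphi(z)}$), and branch on whether $\varphi|_{\complexes}$ is the identity or conjugation. The only cosmetic difference is that you dispose of the conjugation case by pre-composing with the isomorphism of Lemma~\ref{lem:AntiConformalBijectionBetweenFunctions} and reducing to the first case, whereas the paper just repeats the same pointwise computation with a conjugation bar; both versions are equally short and both check out.
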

\begin{proof}
$\varphi$ is a homeomorphism by Theorem \ref{theorem:ring-complete}.  Let $\gamma : \Omega_{1} \to \Omega_{2}$ be the
bijection provided by Lemma \ref{lem:producing_a_bijection_between_domains}. We proceed now by cases.

\textsc{Case I.}
Suppose $\varphi \restriction_{\complexes} = z$.
Then, note that, for $\alpha \in \Omega_{1}$,
\begin{displaymath}
\alpha = z(\alpha) = \varphi(z(\alpha)) = \varphi(z)(\gamma(\alpha)).
\end{displaymath}
Hence, $\varphi(z) \circ \gamma = z \restriction_{\Omega_{1}}$. Since $\gamma$ is a bijection, we see that
$\gamma^{-1} = \varphi(z)$ so $\gamma$ is a conformal mapping.

Now, for any $f \in \mathcal{A}(\Omega_{1})$ and $\alpha \in \Omega_{1}$, notice that
\begin{displaymath}
f(\alpha) = \varphi(f(\alpha)) = \varphi(f)(\gamma(\alpha)) \implies f = \varphi(f) \circ \gamma \implies \varphi(f) = f \circ \gamma^{-1}.
\end{displaymath}

\textsc{Case II.}
Suppose $\varphi \restriction_{\complexes} = \overline{z}$.
Then, note that, for $\alpha \in \Omega_{1}$,
\begin{displaymath}
\overline{\alpha} = \overline{z(\alpha)} = \varphi(z(\alpha)) = \varphi(z)(\gamma(\alpha)).
\end{displaymath}
Hence, $\varphi(z) \circ \gamma = \overline{z}\restriction_{\Omega_{1}}$.
It follows that $\overline{\gamma^{-1}} = \varphi(z)$ so $\gamma$ is an anti-conformal mapping.

Now, for any $f \in \mathcal{A}(\Omega_{1})$ and $\alpha \in \Omega_{1}$, notice that
\begin{align*}
	\overline{f(\alpha)} = \varphi(f(\alpha)) = \varphi(f)(\gamma(\alpha)) &\implies \overline{z} \circ f = \varphi(f) \circ \gamma\\
	&\implies \overline{z} \circ f \circ \gamma^{-1} = \varphi(f)\\
	&\implies \varphi(f) = \overline{f \circ \gamma^{-1}}.
\end{align*}
\end{proof}


\section{The Bounded Analytic Functions on the Disk}

Recall that Liouville's Theorem informs us that the only bounded entire functions are the constants. So in
this trivial case the bounded entire functions form a Polish ring. But what about the bounded analytic
functions on other domains?  The results of this section should be compared with the classic results of Kakutani
(\cite{kakutani-1955a} and \cite{kakutani-1957a}).

Let $B(\mathbb{D})$ be the abstract ring of bounded analytic functions on $\mathbb{D}$. Let $H^{\infty}$ be
the ring $B(\mathbb{D})$ endowed with the topology of uniform convergence (the topology compatible with the
sup norm metric) and identify each scalar $\lambda \in \complexes$ with the constant function taking value
$\lambda$. In this section we assume that the abstract ring $B(\mathbb{D})$ is given a fixed Polish ring
topology and we will show that this leads to a contradiction.

The following proposition must be well known.  A proof is hinted at in
https://math.stackexchange.com/questions/1689215/h-infty-is-not-separable.

\begin{proposition} \label{prop:SeparabilityOfHinf}
The space $H^{\infty}$ is complete metrizable but not separable.
\end{proposition}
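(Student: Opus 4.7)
The plan is to prove completeness by a standard Weierstrass argument and non-separability by exhibiting an uncountable uniformly separated family in the sup norm.

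For completeness, let $(f_n)$ be Cauchy in the sup norm on $B(\mathbb{D})$. It converges uniformly on $\mathbb{D}$ to some function $f$, which is bounded by $\limsup_n \|f_n\|_\infty$ and analytic on $\mathbb{D}$ by Weierstrass's theorem on uniform convergence of holomorphic functions (applied on each compact subdisk). Hence $f \in B(\mathbb{D})$ and $\|f_n - f\|_\infty \to 0$.

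For non-separability the key observation is that a separable metric space contains no uncountable $c$-separated subset: a countable dense $D$ would yield an injection from any such family into $D$ by sending each element to some point of $D$ within distance $c/3$. So it suffices to produce an uncountable family in $H^\infty$ with pairwise sup distances bounded below by a positive constant. I would do this by fixing an interpolating sequence in $\mathbb{D}$, for instance $a_n = 1 - 2^{-n}$, and invoking the Carleson interpolation theorem: there is $M > 0$ such that every bounded sequence $(w_n)$ is the trace on $\{a_n\}$ of some $f \in H^\infty$ with $\|f\|_\infty \leq M \sup_n |w_n|$. For each $\epsilon \in \{0,1\}^{\naturals}$ choose such an interpolant $f_\epsilon$ realising the values $\epsilon_n$; whenever $\epsilon \neq \epsilon'$, pick $n$ with $\epsilon_n \neq \epsilon'_n$ to obtain $\|f_\epsilon - f_{\epsilon'}\|_\infty \geq |f_\epsilon(a_n) - f_{\epsilon'}(a_n)| = 1$. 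This produces $2^{\aleph_0}$ functions pairwise at sup distance at least $1$.

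The main obstacle is the appeal to Carleson's theorem. For a proof fully self-contained within the style of this paper, one can instead build the $f_\epsilon$ explicitly as Blaschke products with zero sets $\{a_n : \epsilon_n = 0\}$ (plus unit-modulus convergence factors) and verify the required separation with Schwarz--Pick distortion estimates, provided $\{a_n\}$ is taken sparse enough (for example $1 - |a_{n+1}| \leq \tfrac{1}{2}(1 - |a_n|)$ is already sufficient). Either route supplies the desired uncountable $1$-separated family and hence non-separability.
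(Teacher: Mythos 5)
Your completeness argument matches the standard one and is fine. For non-separability you and the paper both reduce to exhibiting an uncountable, uniformly separated family in $H^\infty$, but the families are genuinely different. The paper takes the singular inner functions $f_\lambda = \exp\bigl((z+\lambda)/(z-\lambda)\bigr)$ indexed by $\lambda \in S^1$ and verifies $\|f_\alpha - f_\lambda\|_\infty \geq 1$ for $\alpha \neq \lambda$ by a short, fully elementary computation: $\Re\bigl((z+\lambda)/(z-\lambda)\bigr) < 0$ on $\mathbb{D}$ gives $\|f_\lambda\|_\infty \leq 1$, and approaching the boundary radially along $t\lambda$ makes $|f_\lambda(t\lambda)| \to 0$ while $|f_\alpha(t\lambda)| \to 1$. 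Your route, via Carleson interpolation (or equivalently via Blaschke products on a sufficiently sparse sequence plus Schwarz--Pick distortion), also works and is a respectable standard proof, but it invokes substantially heavier machinery: Carleson's interpolation theorem is a deep result, and the ``elementary'' Blaschke alternative still requires setting up the uniform separation of the zero sequence and controlling the tail of the Blaschke product, details your proposal leaves as a sketch. The trade-off is that your approach generalizes readily (any infinite interpolating sequence produces a $2^{\aleph_0}$-sized separated set, and the same argument works on other domains admitting Carleson-type interpolation), whereas the paper's construction is tailor-made for $\mathbb{D}$ but closes in a page with no black boxes, which suits a paper whose audience is descriptive set theorists rather than function theorists. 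If you want a self-contained argument in the spirit of the paper, the $f_\lambda$ family is the simplest available; your Blaschke variant would need the distortion estimates spelled out before it could be called complete.
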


For a hint of the proof, for each $\lambda \in S^{1}$, let
\begin{displaymath}
f_{\lambda} = \exp\left( \frac{z + \lambda}{z - \lambda} \right).
\end{displaymath}
Then show that $\|f_{\lambda}\| \le 1$ and $\|f_{\lambda_{1}} - f_{\lambda_{2}}\| \ge 1$ for all
$\lambda_{1}$, $\lambda_{2} \in S^{1}$ with $\lambda_{1} \ne \lambda_{2}$.

\begin{lemma} \label{lem:BoundedDecomposition}
For any $f \in B(\mathbb{D})$ and $\alpha \in \mathbb{D}$, there exists $g \in B(\mathbb{D})$ so that $f = (z
- \alpha)g + f(\alpha)$. Consequently, $M_{\alpha} = \{ f\in B(\mathbb{D}) \ | \ f(\alpha) = 0 \}$ is a
principal maximal ideal.
\end{lemma}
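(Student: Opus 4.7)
The plan is to imitate the standard $\mathcal{A}(\Omega)$ decomposition (stated just before Lemma \ref{lemma:characterization-of-ideals}), but with an explicit check that the quotient stays bounded. Given $f \in B(\mathbb{D})$ and $\alpha \in \mathbb{D}$, I define
\begin{displaymath}
g(z) = \frac{f(z) - f(\alpha)}{z - \alpha} \quad (z \neq \alpha), \qquad g(\alpha) = f'(\alpha).
\end{displaymath}
From the Taylor expansion of $f$ around $\alpha$ (or the Riemann removable singularity theorem), $g$ is analytic on $\mathbb{D}$ and plainly satisfies $f = (z-\alpha)g + f(\alpha)$. The content of the lemma is therefore the boundedness of $g$.

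For this, I would split $\mathbb{D}$ into a small closed disk around $\alpha$ and its complement in $\mathbb{D}$. Choose any $r$ with $0 < r < 1 - |\alpha|$, so that $\overline{B(\alpha,r)} \subseteq \mathbb{D}$. On this compact set $g$ is continuous and hence bounded. Off this set, for $z \in \mathbb{D}$ with $|z - \alpha| \geq r$, I estimate
\begin{displaymath}
|g(z)| = \frac{|f(z) - f(\alpha)|}{|z-\alpha|} \leq \frac{2\|f\|_{\infty}}{r}.
\end{displaymath}
Combining the two bounds gives $g \in B(\mathbb{D})$.

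From this decomposition, $M_{\alpha} = (z-\alpha)B(\mathbb{D})$ and so $M_{\alpha}$ is a principal ideal, generated by $z - \alpha$. Maximality is immediate from the usual point-evaluation argument: the map $\mathrm{ev}_{\alpha} : B(\mathbb{D}) \to \complexes$ defined by $f \mapsto f(\alpha)$ is a surjective ring homomorphism (constants lie in $B(\mathbb{D})$) with kernel exactly $M_{\alpha}$, so $B(\mathbb{D})/M_{\alpha} \cong \complexes$ is a field.

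The only real obstacle is the boundedness of $g$, and the two-region split above neutralizes it: the trouble would only come from $z$ approaching $\alpha$, but there the numerator vanishes at a matching rate and analyticity turns the apparent singularity into boundedness on a neighborhood of $\alpha$; everywhere else $z - \alpha$ is bounded away from $0$, so the bound on $f$ does all the work.
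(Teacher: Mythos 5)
Your proof is correct and follows essentially the same route as the paper: both construct $g$ as the difference quotient with the removable singularity filled in, and both establish boundedness by splitting $\mathbb{D}$ into a small neighborhood of $\alpha$ (where $g$ is controlled by continuity/compactness near the removable singularity) and its complement (where $|z-\alpha|$ is bounded below and $\|f\|_\infty$ does the work). The maximality argument via the kernel of the evaluation homomorphism is likewise the one in the paper.
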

\begin{proof}
We need only check the statement for non-constant functions as the statement obviously holds for constant functions ($g$ is taken to be zero when $f$ is constant).
Define $g : \mathbb{D} \to \complexes$ by the rule
\begin{displaymath}
g(\zeta) =
    \begin{cases}
		\dfrac{f(\zeta) - f(\alpha)}{\zeta - \alpha}, & \zeta \neq \alpha;\\
		f'(\alpha), & \zeta = \alpha.
		\end{cases}
\end{displaymath}
$g$ is analytic in $\mathbb{D}$ so we need only check it is bounded.
Let $U$ be a connected open set so that
\begin{displaymath}
\alpha \in U \subseteq \text{cl}(U) \subseteq \mathbb{D}
\end{displaymath}
and, for each $\zeta \in U$,
\begin{displaymath}
\left| \frac{f(\zeta) - f(\alpha)}{\zeta - \alpha} \right| < |f'(\alpha)| + 1.
\end{displaymath}
Let $D = \min\{ |\zeta - \alpha| \ | \ \zeta \in \mathbb{D} \setminus U \}$ and notice that $D > 0$ since $U$
is an open set containing $\alpha$.
Now, for any $\zeta \in \mathbb{Z}$,
\begin{displaymath}
|g(\zeta)| \leq \max \left\{ |f'(\alpha)| + 1, \frac{2\|f\|_\infty}{D} \right\}.
\end{displaymath}
and therefore $g$ is bounded.

Lastly, as the ring homomorphism $f \mapsto f(\alpha)$, $B(\mathbb{D}) \to \complexes$, has kernel
$M_{\alpha}$, we see that $M_{\alpha}$ is a principal maximal ideal in $B(D)$.
\end{proof}

The next lemma finds inspiration from a comment in the proof of \cite[Proposition 3]{RoydenMeromorphic} where
H. L. Royden suggests an algebraic characterization of the constant functions in any ring consisting of
meromorphic functions.

\begin{lemma} \label{lem:BoundedScalars}
Let $\varphi : B(\mathbb{D}) \to H^{\infty}$ be the identity map.
Then $\varphi^{-1}[\complexes]$ is an analytic set.
\end{lemma}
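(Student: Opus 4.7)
The plan is to give an algebraic characterization of the constants in $B(\mathbb{D})$ that exactly mirrors Lemma \ref{lem:scalar_characterisation}, and then observe that every ingredient of this characterization is Borel (hence analytic) in the assumed Polish ring topology on $B(\mathbb{D})$.

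First I would let $\mathcal{I}^{*}$ denote the multiplicative group of invertible elements of the abstract ring $B(\mathbb{D})$ (concretely, the $f \in B(\mathbb{D})$ with $\inf_{\mathbb{D}} |f| > 0$, but I will not need the concrete description). Since $B(\mathbb{D})$ is a commutative Polish ring with unity in the hypothesized topology, Proposition \ref{prop:InvertiblesAreBorel} gives that $\mathcal{I}^{*}$ is a Borel subset of $B(\mathbb{D})$.

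Next I would fix a countable dense set $\Lambda \subseteq \complexes$ and claim that
\begin{displaymath}
\varphi^{-1}[\complexes] \;=\; \bigcap_{\lambda \in \Lambda} \bigl(\lambda + (\mathcal{I}^{*} \cup \{0\})\bigr).
\end{displaymath}
The inclusion $\subseteq$ is immediate: if $f = c$ is a constant, then $f - \lambda$ is either the zero function or a nonzero constant function, and the latter is invertible in $B(\mathbb{D})$. For the inclusion $\supseteq$, I would argue contrapositively: if $f \in B(\mathbb{D})$ is non-constant, then by the Open Mapping Theorem the image $f[\mathbb{D}]$ is open and nonempty, so it meets $\Lambda$ at some $\lambda$. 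For that $\lambda$, the function $f - \lambda$ has a zero in $\mathbb{D}$ (so it is not invertible in $B(\mathbb{D})$) and is not identically zero (since $f$ is non-constant), so $f - \lambda \notin \mathcal{I}^{*} \cup \{0\}$.

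Finally, since translation by the fixed element $\lambda \in B(\mathbb{D})$ is a homeomorphism of the Polish ring $B(\mathbb{D})$ and $\mathcal{I}^{*} \cup \{0\}$ is Borel, each set $\lambda + (\mathcal{I}^{*} \cup \{0\})$ is Borel, and a countable intersection of Borel sets is Borel, hence analytic. I do not see a genuine obstacle here; the only nontrivial input beyond the general ring-theoretic fact from Proposition \ref{prop:InvertiblesAreBorel} is the Open Mapping Theorem, which guarantees that the characterization of constants via invertibility, developed for $\mathcal{A}(\Omega)$ in Lemma \ref{lem:scalar_characterisation}, transfers verbatim to $B(\mathbb{D})$.
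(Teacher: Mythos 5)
Your proof is correct, and it takes a genuinely different route from the one in the paper. The paper characterizes the constants following a suggestion of Royden: $f$ is constant if and only if $f - \lambda$ admits a square root in $B(\mathbb{D})$ for every $\lambda$ in a fixed countable dense set $\Lambda$. The set $\{ f \ | \ \exists g\, (g^{2} = f-\lambda)\}$ is then the first-coordinate projection of the closed set $A_{\lambda} = \{\langle f,g\rangle \ | \ g^{2} = f - \lambda\}$, and since that projection is at least $2$-to-$1$ (both $g$ and $-g$ work), one only concludes that each $\pi[A_{\lambda}]$ is analytic; the intersection over $\Lambda$ is then analytic. You instead characterize the constants via invertibility, exactly as in Lemma~\ref{lem:scalar_characterisation}, and the verification you give is sound: for constant $f$ the difference $f - \lambda$ is either $0$ or a nonzero constant (hence invertible in $B(\mathbb{D})$), while for non-constant $f$ the Open Mapping Theorem furnishes $\lambda \in \Lambda \cap f[\mathbb{D}]$, so $f-\lambda$ has a zero and can have no multiplicative inverse, and it is not identically zero. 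Since $B(\mathbb{D})$ is a commutative Polish ring with unity in the hypothesized topology, Proposition~\ref{prop:InvertiblesAreBorel} gives that the invertibles form a \emph{Borel} set, translation is a homeomorphism, and the countable intersection is Borel. So your argument in fact proves the strictly stronger conclusion that $\varphi^{-1}[\complexes]$ is Borel rather than merely analytic; the extra strength comes precisely from the injectivity of the graph $\{\langle x,y\rangle \ | \ xy = 1\}$ over its first coordinate in a commutative ring, which has no analogue for the square-root relation the paper uses.
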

\begin{proof}
Let $\Lambda$ be a countable dense subset of $\complexes$.
First, we will see that \(f\) is constant if and only if, for all \(\lambda \in \Lambda\), there exists \(g \in B(\mathbb{D})\) such that \(g^{2} = f - \lambda\).

If $f$ is a constant function, then so is $f - \lambda$ and $f - \lambda$ has a square root.

Now suppose $f$ is non-constant and pick $\alpha \in \mathbb{D}$ so that $f'(\alpha) \neq 0$. Pick $r > 0$ so
that for all $\zeta \in \closure(B(\alpha,r))$, $f'(\zeta) \neq 0$. Since $f$ is analytic, $f[B(\alpha,r)]$ is
open so pick $\lambda \in f[B(\alpha,r)] \cap \Lambda$ and $\zeta \in B(\alpha,r)$ so that $f(\zeta) =
\lambda$. Suppose we have $g \in B(\mathbb{D})$ so that $g^{2} = f - \lambda$. Notice that $g^{2}(\zeta) =
f(\zeta) - \lambda = 0$ and $g(\zeta) = 0$. Next $2gg' = f'$ which gives
\begin{displaymath}
2g(\zeta)g'(\zeta) = f'(\zeta) \neq 0,
\end{displaymath}
contradicting the fact that $g(\zeta) = 0$. So $f -\lambda$ has no square root.

Now, notice that
\begin{displaymath}
A_{\lambda} = \{ \langle f, g \rangle \in B(\mathbb{D})^{2} \ | \ g^{2} = f - \lambda \}
\end{displaymath}
is closed so the projection $\pi: B(\mathbb{D})^{2} \to B(\mathbb{D})$ onto the first coordinate
guarantees that $\pi[A_{\lambda}]$ is an analytic set. Finally, analytic sets are closed under countable
intersections so
\begin{displaymath}
A = \bigcap\{ \pi[A_{\lambda}] \ | \ \lambda \in \Lambda \}
\end{displaymath}
is an analytic set. We are done since $A = \varphi^{-1}[\complexes]$.
\end{proof}

\begin{lemma} \label{lem:BoundedEvaluation}
Let $\varphi : B(\mathbb{D}) \to H^{\infty}$ be the identity map and let $\alpha \in \mathbb{D}$.
Then
\begin{enumerate}[label=(\roman*)]
\item \label{BoundedRing_MaximalIdealsClosed}
$M_{\alpha}$ is closed in $B(\mathbb{D})$;
\item \label{BoundedRing_ScalarsClosed}
$\varphi^{-1}[\complexes]$ is closed in $B(\mathbb{D})$;
\item \label{BoundedRing_DecompositionIsHomeomorphism}
$f \mapsto f(\alpha)$, $B(\mathbb{D}) \to \varphi^{-1}[\complexes]$, is continuous.
\end{enumerate}
\end{lemma}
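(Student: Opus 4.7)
The plan is to follow exactly the template set by Lemma~\ref{lem:decomposition} and Lemma~\ref{lem:RingAbstractPointEvaluation}, with Lemma~\ref{lem:BoundedDecomposition} and Lemma~\ref{lem:BoundedScalars} supplying the inputs that replace the corresponding facts for $\mathcal{A}(\Omega)$. The key observation is that Lemma~\ref{lem:BoundedDecomposition} guarantees the additive direct sum decomposition
\[
B(\mathbb{D}) = M_{\alpha} \oplus \varphi^{-1}[\complexes],
\]
where the sum is over the additive Polish group underlying the ring $B(\mathbb{D})$.

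For \ref{BoundedRing_MaximalIdealsClosed} and \ref{BoundedRing_ScalarsClosed}: First I would note that $M_{\alpha}$ is a principal ideal (generated by $z-\alpha$, by Lemma~\ref{lem:BoundedDecomposition}) and therefore an analytic subset of $B(\mathbb{D})$ by Proposition~\ref{proposition:M-analytic}. Meanwhile, $\varphi^{-1}[\complexes]$ is analytic by Lemma~\ref{lem:BoundedScalars}. Both are additive subgroups of $B(\mathbb{D})$, and Lemma~\ref{lem:BoundedDecomposition} yields $B(\mathbb{D}) = M_{\alpha} + \varphi^{-1}[\complexes]$, while $M_{\alpha} \cap \varphi^{-1}[\complexes] = \{0\}$ since a constant function that vanishes at $\alpha$ is identically zero. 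Corollary~\ref{cor:Atim}, applied to the additive Polish group of $B(\mathbb{D})$ (the word ``multiplicative'' there being purely notational), then forces both $M_{\alpha}$ and $\varphi^{-1}[\complexes]$ to be closed in $B(\mathbb{D})$.

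For \ref{BoundedRing_DecompositionIsHomeomorphism}: Once \ref{BoundedRing_MaximalIdealsClosed} and \ref{BoundedRing_ScalarsClosed} are in hand, the subgroups $M_{\alpha}$ and $\varphi^{-1}[\complexes]$ are themselves Polish additive groups, so $M_{\alpha} \oplus \varphi^{-1}[\complexes]$ is a Polish additive group in the product topology. The addition map
\[
\langle x, y \rangle \mapsto x + y,\quad M_{\alpha} \oplus \varphi^{-1}[\complexes] \to B(\mathbb{D}),
\]
is a continuous additive group isomorphism, so Theorem~\ref{theorem:becker-kechris} promotes it to a homeomorphism. Consequently the projection onto the second factor is continuous. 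Unraveling the decomposition exactly as in the proof of Lemma~\ref{lem:RingAbstractPointEvaluation} identifies this projection with the map $f \mapsto f(\alpha)$, viewed as a map into $\varphi^{-1}[\complexes]$, yielding \ref{BoundedRing_DecompositionIsHomeomorphism}.

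I do not anticipate a real obstacle: every analytic ingredient has already been prepared (principal-ideal analyticity, analyticity of the scalars, and the explicit decomposition), so the proof is essentially a direct transcription of the argument in Section~\ref{section-roafasc}. The only small point requiring attention is confirming that Corollary~\ref{cor:Atim} applies to the additive group structure, which is immediate from its proof via Proposition~\ref{prop:Atim}.
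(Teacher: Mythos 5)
Your proof is correct and takes essentially the same approach as the paper: analyticity of $M_{\alpha}$ (principal ideal) and of $\varphi^{-1}[\complexes]$ (Lemma~\ref{lem:BoundedScalars}) together with the direct-sum decomposition from Lemma~\ref{lem:BoundedDecomposition} feed into Corollary~\ref{cor:Atim} for closedness, and then Theorem~\ref{theorem:becker-kechris} applied to the continuous addition map $M_{\alpha}\oplus\varphi^{-1}[\complexes]\to B(\mathbb{D})$ gives the homeomorphism whose second projection is $f\mapsto f(\alpha)$. Your phrasing of part \ref{BoundedRing_DecompositionIsHomeomorphism} is in fact slightly cleaner than the paper's, since you explicitly run the Baire-property upgrade in the direction (addition map) where continuity is manifest.
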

\begin{proof}
\ref{BoundedRing_MaximalIdealsClosed}-\ref{BoundedRing_ScalarsClosed}
Lemma \ref{lem:BoundedDecomposition} implies that $M_{\alpha}$ is a principal ideal in
$B(\mathbb{D})$, so $M_{\alpha}$ is an analytic set.   Since Lemma \ref{lem:BoundedScalars}
establishes that $\varphi^{-1}[\complexes]$ is an analytic additive subgroup of $B(\mathbb{D})$, both
$M_{\alpha}$ and $\varphi^{-1}[\complexes]$ are closed subsets of $B(\mathbb{D})$ by an application of Lemma
\ref{cor:Atim}.

\ref{BoundedRing_DecompositionIsHomeomorphism}
The natural decomposition $f \mapsto \langle f - f(\alpha), f(\alpha) \rangle$, $B(\mathbb{D}) \to
M_{\alpha} \oplus \varphi^{-1}[\complexes]$, is a homeomorphism qua a continuous group isomorphism
between additive Polish groups. Lastly, $f \mapsto f(\alpha)$, $B(\mathbb{D}) \to
\varphi^{-1}[\complexes]$, is continuous.
\end{proof}

\begin{theorem} \label{thm:NoPolishRingTopologyOnBounded}
The abstract ring of bounded analytic functions on the disk cannot be made into a Polish ring.
\end{theorem}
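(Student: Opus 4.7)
\emph{Proof plan.} I expect the contradiction to come from a Baire category argument showing that the Polish ring topology $\tau$ must refine the sup-norm topology on $B(\mathbb{D})$; combined with separability of $\tau$ (since every Polish topology is separable), this would force $H^{\infty}$ to be separable, contradicting Proposition \ref{prop:SeparabilityOfHinf}. The main tool available is Lemma \ref{lem:BoundedEvaluation}(iii), which provides, for each $\alpha \in \mathbb{D}$, the $\tau$-continuity of the point-evaluation $\mathrm{ev}_{\alpha}: (B(\mathbb{D}), \tau) \to \varphi^{-1}[\complexes]$ into the closed scalar subring equipped with its subspace topology $\tau_0$ inherited from $\tau$.

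Fix a countable dense $\Lambda \subseteq \mathbb{D}$ and, for each $n \geq 1$, express the closed sup-norm ball of radius $n$ as
\begin{displaymath}
F_n = \{ f \in B(\mathbb{D}) : \|f\|_{\infty} \leq n \} = \bigcap_{\alpha \in \Lambda} \mathrm{ev}_{\alpha}^{-1}\bigl(\{ c \in \varphi^{-1}[\complexes] : |c| \leq n \}\bigr),
\end{displaymath}
using density of $\Lambda$ together with continuity of each $f$ to reduce the uncountable condition to a countable one. The first main step is to show each $F_n$ is $\tau$-closed, which by continuity of $\mathrm{ev}_{\alpha}$ reduces to showing the usual closed disk $\{c : |c| \leq n\}$ is $\tau_0$-closed in $\varphi^{-1}[\complexes]$. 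Granting this, $B(\mathbb{D}) = \bigcup_{n \geq 1} F_n$ expresses the Polish space $(B(\mathbb{D}), \tau)$ as a countable union of $\tau$-closed sets, so Baire category furnishes some $N$ and a nonempty $\tau$-open $U \subseteq F_N$. Pick $f_0 \in U$; since translation is a $\tau$-homeomorphism, $V = U - f_0$ is a $\tau$-open neighborhood of $0$ contained in $F_{2N}$. Multiplication by any nonzero complex constant is also a $\tau$-homeomorphism, so for each $\delta > 0$ the rescaled set $(\delta/(2N)) \cdot V$ is a $\tau$-open neighborhood of $0$ contained in $F_{\delta}$. Translating again, arbitrarily small sup-norm balls around any point of $B(\mathbb{D})$ are $\tau$-open; that is, $\tau$ refines the sup-norm topology, and any countable $\tau$-dense subset is sup-norm dense, contradicting Proposition \ref{prop:SeparabilityOfHinf}.

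The crucial obstacle is the $\tau$-closedness of each $F_n$, equivalently the $\tau_0$-closedness of the usual closed disk in the scalar subring. Because $\complexes$ admits many inequivalent Polish field topologies obtained by transporting the usual topology under wild ring automorphisms — a point emphasized repeatedly in the excerpt — there is no automatic identification of $\tau_0$ with the usual topology on $\complexes$, and the closed disk could in principle fail to be $\tau_0$-closed. I would expect this to be resolved either by pinning down $\tau_0$ directly, perhaps combining Theorem \ref{theorem:bounded-means-continuous} with the fact that the scalar subring sits as a closed subring of the richer Polish ring $(B(\mathbb{D}), \tau)$ and receives continuous surjections $\mathrm{ev}_{\alpha}$ indexed by $\alpha \in \mathbb{D}$, or by replacing closedness with a purely ring-theoretic and manifestly $\tau$-Borel description of $F_n$ — for example, through invertibility of $r - f$ in a suitable extension for rational $r > n$ — and then running the Baire argument on Borel rather than closed sets.
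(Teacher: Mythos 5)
Your plan tracks the paper's proof essentially step by step: cover $B(\mathbb{D})$ by the sup-norm balls $F_n$ cut out by point evaluations, apply Baire category and rescaling to show the hypothetical Polish topology $\tau$ refines the sup-norm topology, and then contradict the non-separability of $H^\infty$. You have also correctly put your finger on the place where the argument is not automatic: Lemma \ref{lem:BoundedEvaluation}\ref{BoundedRing_DecompositionIsHomeomorphism} only gives $\tau$-continuity of $f \mapsto f(\alpha)$ into the closed scalar subring $K = \varphi^{-1}[\complexes]$ with its $\tau$-subspace topology, not into $\complexes$ with the usual topology, so there is no free lunch on the $\tau$-closedness of each $F_n$. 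It is worth knowing that the paper's own proof states flatly that ``$B(\mathbb{D}) \to \complexes$, $f \mapsto f(\alpha)$ is continuous'' and deduces closedness of $F_n$ without comment, so the gap you flag is glossed over rather than visibly closed there.

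The cleanest fix differs a bit from both of your suggestions and is worth spelling out. For a constant $c$, the function $z - c$ is invertible in $B(\mathbb{D})$ precisely when $|c| > 1$ (a bounded inverse exists if and only if $c$ lies at positive distance from $\mathbb{D}$). Hence $\{ c \in K : |\varphi(c)| > 1 \} = K \cap \bigl( \varphi^{-1}(z) - \mathcal{I}_{B(\mathbb{D})} \bigr)$, which is Borel by Proposition \ref{prop:InvertiblesAreBorel} together with Lemma \ref{lem:BoundedEvaluation}\ref{BoundedRing_ScalarsClosed} and the fact that $x \mapsto \varphi^{-1}(z) - x$ is a homeomorphism of the Polish ring $(B(\mathbb{D}),\tau)$. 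Now apply Proposition \ref{prop:ContinuityByExterior} to the Polish ring $K$ and the Borel set $\Omega = \{ \lambda : |\lambda| > 1 \}$ (which has nonempty interior and nonempty exterior) to conclude that $\varphi\restriction_K : K \to \complexes$ is a topological isomorphism; only after this step are the $F_n$ genuinely $\tau$-closed, and the rest of your Baire/Pettis argument is correct as written. By contrast, your proposed criterion ``$r - f$ invertible in a suitable extension for rational $r > n$'' does not characterize $\|f\|_\infty \le n$ — taking $f$ to be a constant such as $(n+1)i$, the elements $r - f$ are invertible for every real $r$ yet $\|f\|_\infty > n$ — and your first suggestion via Theorem \ref{theorem:bounded-means-continuous} would require relating $\varphi\restriction_K$ to an automorphism of $\complexes$ before the topology on $K$ is known, which is circular. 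It is easier to pin down the topology on the scalars once and for all, as above, than to make each $F_n$ Borel by hand.
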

\begin{proof}
Let $F_{n} = \{ f \in B(\mathbb{D}) \ | \ \|f\|_\infty \leq n \}$.  Since the mapping $B(\mathbb{D}) \to
\complexes$, $f \mapsto f(\alpha)$, is continuous for each $\alpha \in \mathbb{D}$, each $F_{n}$ is closed in
$B(\mathbb{D})$.  Since $B(\mathbb{D}) = \bigcup_{n \ge 1} F_{n}$, the Baire Category Theorem implies that some
$F_{n}$, say $F_{m}$, contains a nonempty open subset $U \subseteq F_{m} \subseteq
B(\mathbb{D})$.   But then $0 \in V = U - U \subseteq F_{m} - F_{m} \subseteq F_{2m}$, where $V$ is now an
open neighborhood of $0 \in B(\mathbb{D})$.   Furthermore, if $\mu > 0$, then $0 \in \mu \cdot V \subseteq \mu
\cdot F_{2m} = F_{\mu 2m}$, where $\mu \cdot V$ is an open neighborhood of $0 \in B(\mathbb{D})$ since
multiplication by $\mu$ is a homeomorphism in $B(\mathbb{D})$.  From this it is elementary to check that every
open subset of $H^{\infty}$ is open in $B(\mathbb{D})$, i.e., the identity mapping $B(\mathbb{D}) \to
H^{\infty}$ is continuous. But the continuous image of a separable space is separable, contradicting
Proposition \ref{prop:SeparabilityOfHinf}.
\end{proof}


\section{The Field of Meromorphic Functions}

In what follows $\mathcal{M}(\Omega)$ denotes the abstract field of meromorphic functions on $\Omega$.  The
result of this section is presented for its own interest and as a complement to Theorem
\ref{theorem:ring-complete}.

\begin{theorem} \label{meromorphic-not-Polish}
The abstract field $\mathcal{M}(\Omega)$ cannot be made into a Polish field.
\end{theorem}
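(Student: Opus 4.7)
If $\Omega$ is disconnected then $\mathcal{A}(\Omega)$ has nontrivial zero divisors (the indicator of any single connected component is analytic), so $\mathcal{M}(\Omega)$ is not a field and the statement is vacuous. Assume henceforth that $\Omega$ is connected, so $\mathcal{A}(\Omega)$ is an integral domain (Proposition \ref{proposition:connected-iff-integral-domain}) with $\mathcal{M}(\Omega)$ as its field of fractions.

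The plan is to suppose that $\mathcal{M}(\Omega)$ carries a Polish field topology $\tau$ and to use Theorem \ref{theorem:ring-complete} to pin down the topology on $\mathcal{A}(\Omega) \subseteq (\mathcal{M}(\Omega),\tau)$, from which a contradiction will be deduced. The first substep is to exhibit $\mathcal{A}(\Omega)$ as an analytic (or at least Baire-measurable) subset of $(\mathcal{M}(\Omega),\tau)$. Algebraically, $f \in \mathcal{A}(\Omega)$ iff $f$ lies in every valuation ring $V_{\alpha} = \{g \in \mathcal{M}(\Omega) : g \text{ has no pole at } \alpha\}$, and I would try to pare this uncountable intersection down to a Borel condition by exploiting isolation of poles together with the countably many ``probes'' $(z-\beta)^{n}$ for $\beta \in \rationals^{2} \cap \Omega$ and $n \ge 1$. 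Granting this, the inclusion $\iota : \mathcal{A}(\Omega) \hookrightarrow (\mathcal{M}(\Omega),\tau)$ is a Baire-measurable additive-group homomorphism between Polish groups, continuous by Theorem \ref{theorem:becker-kechris}. An additive-group analogue of Corollary \ref{cor:Atim}, or a Pettis-type argument, should then make $\mathcal{A}(\Omega)$ closed in $(\mathcal{M}(\Omega),\tau)$, and Theorem \ref{theorem:ring-complete} would identify the inherited topology on $\mathcal{A}(\Omega)$ with its standard one.

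For the contradiction, I would examine the multiplicative quotient $\mathcal{M}(\Omega)^{*}/\mathcal{I}$, where $\mathcal{I} = \mathcal{A}(\Omega)^{*}$ is the group of zero-free analytic functions; by the argument above, $\mathcal{I}$ is closed in the Polish group $\mathcal{M}(\Omega)^{*}$ (Proposition \ref{prop:ContinuityOfInversion}), and so the quotient is a Polish abelian group. Algebraically this quotient is the divisor group of $\Omega$, free abelian on uncountably many generators $[z - \alpha]$ for $\alpha \in \Omega$. Combining Proposition \ref{proposition:complex-vector-spaces}---which keeps each line $\complexes \cdot (z-\alpha)$ closed and so exhibits the generators as topologically separated---with the second countability forced by Polishness, I expect that no Polish group topology can accommodate uncountably many independent cyclic generators in free-abelian position, giving the contradiction.

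The principal obstacle is the first substep, the algebraic characterization of $\mathcal{A}(\Omega)$ as an analytic subset of $\mathcal{M}(\Omega)$ using countably many Borel conditions; once secured, the rest chains together standard descriptive-set-theoretic tools, although the final separability/rigidity argument for the divisor-group quotient may still require additional regularity of $\tau$ to ensure that uncountably many generators really remain topologically independent.
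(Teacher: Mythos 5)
Your high-level plan matches the paper's: show $\mathcal{A}(\Omega)$ is a closed (hence Polish) subring of the hypothetical Polish field $\mathcal{M}(\Omega)$, invoke Theorem~\ref{theorem:ring-complete} to pin down its topology, and then derive a contradiction. But the step you flag as ``the principal obstacle'' is genuinely unresolved, and the route you sketch there does not work: a meromorphic function can have a pole at a point $\alpha \in \Omega \setminus \rationals^{2}$, and multiplying by $(z-\beta)^{n}$ for $\beta \in \rationals^{2}$ never cancels such a pole. Isolation of poles gives you no purchase either, since proximity is precisely what you cannot speak of before the topology is known. So no countable family of algebraic ``probes'' indexed by $\rationals^{2}\cap\Omega$ can carve out $\mathcal{A}(\Omega)$ as $\bigcap_{\alpha} V_{\alpha}$, and this genuine intersection over uncountably many $\alpha$ cannot be reduced to a Borel condition in the way you propose.

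The paper bypasses the reduction entirely. For each $\alpha\in\Omega$, the valuation $f \mapsto n(f,\alpha)$ is a group homomorphism from the multiplicative Polish group $\mathcal{M}(\Omega)^{*}$ into the discrete group $(\integers,+)$; by Dudley's automatic-continuity theorem \cite{dudley-1961a}, \emph{every} such homomorphism is continuous, no measurability hypothesis required. Each set $\{f : n(f,\alpha)\ge 0\}$ is then clopen, so the uncountable intersection $\mathcal{A}(\Omega)\setminus\{0\} = \bigcap_{\alpha\in\Omega}\{f:n(f,\alpha)\ge 0\}$ is closed outright, with no need to thin it to countably many conditions. After Theorem~\ref{theorem:ring-complete} identifies the topology on $\mathcal{A}(\Omega)$, the paper's contradiction is immediate and elementary: $z^{2}+z/n \to z^{2}$ uniformly on compacta, yet $n(z^{2}+z/n,0)=1$ while $n(z^{2},0)=2$, so $n(\cdot,0)$ fails to be continuous. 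Your proposed contradiction via the divisor group $\mathcal{M}(\Omega)^{*}/\mathcal{I}$ being free abelian of uncountable rank can in fact be pushed through, but the ``no Polish topology on an uncountable free abelian group'' claim itself requires Dudley-type automatic continuity of the coordinate projections to make the generated countable subgroup closed; once you are invoking Dudley anyway, the paper's direct evaluation of $n(\cdot,0)$ against an explicit convergent sequence is substantially shorter and avoids the unproved rigidity lemma.
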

\begin{proof}
Suppose that $M(\Omega)$ can be given a Polish field topology.  Then $G = M(\Omega) \setminus {0}$ may be
viewed as a multiplicative Polish group.  For each $\alpha \in \Omega$ let $o(f,\alpha) \in \integers$ be the
order (positive for zeros, negative for poles, and $0$ otherwise) of $f$ at $\alpha$.  Each of the mappings $f
\mapsto o(f,\alpha)$, $G \to (\integers,+)$, is a homomorphism of groups which is continuous by a theorem of
Dudley \cite{dudley-1961a}.  Since arbitrary subsets of the discrete space $\integers$ are both open and
closed, then $\mathcal{A}(\Omega) \setminus \{0\}  = \bigcap_{\alpha \in \Omega} \{ f \in G \ | \ o(f,\alpha)
\ge 0 \}$  is closed in $G$. Therefore $\mathcal{A}(\Omega)$ is a closed subset and hence a Polish subring of
$\mathcal{M}(\Omega)$. Theorem \ref{theorem:ring-complete} implies that $\mathcal{A}(\Omega) \subseteq
\mathcal{M}(\Omega)$ has its usual topology.  Then $z^{2} + z/n \to z^{2}$ in the usual topology, but $1 =
o(z^{2} + z/n,0) \not \to o(z^{2},0) = 2$, a contradiction.
\end{proof}


\bibliographystyle{plain}


\end{document}